\setlist[itemize]{leftmargin=2em}
\setlist[enumerate]{leftmargin=2em}
\definecolor{darkblue}{rgb}{0.0,0,0.7} 
\definecolor{darkred}{rgb}{0.7,0,0} 
\definecolor{darkgreen}{rgb}{0, .6, 0} 
\newcommand{\defncolor}{\color{darkred}}
\newcommand{\defn}[1]{{\defncolor\emph{#1}}} 
\newtheorem{theorem}{Theorem}[section]
\newtheorem{proposition}[theorem]{Proposition}
\newtheorem{corollary}[theorem]{Corollary}
\newtheorem{conjecture}[theorem]{Conjecture}
\theoremstyle{definition}
\newtheorem{definition}[theorem]{Definition}
\newtheorem{example}[theorem]{Example}
\newtheorem{remark}[theorem]{Remark}
\numberwithin{equation}{section}
\newcommand{\idiot}[1]{\vspace{5 mm}\par \noindent
\marginpar{\textsc{Note}}
\framebox{\begin{minipage}[c]{0.95 \textwidth}
#1 \end{minipage}}\vspace{5 mm}\par}
\renewcommand{\idiot}[1]{}
\newcommand{\rank}{\mathsf{rank}}
\def\NN{{\mathbb N}}
\def\ZZ{{\mathbb Z}}
\newdimen\squaresize \squaresize=10pt
\newdimen\thickness \thickness=0.4pt
\def\square#1{\hbox{\vrule width \thickness
     \vbox to \squaresize{\hrule height \thickness\vss
        \hbox to \squaresize{\hss#1\hss}
     \vss\hrule height\thickness}
\unskip\vrule width \thickness}
\kern-\thickness}
\def\vsquare#1{\vbox{\square{$#1$}}\kern-\thickness}
\def\thisbox#1{\kern-.09ex\fbox{#1}}
\def\downbox#1{\lower1.200em\hbox{#1}}
\newdimen\Squaresize \Squaresize=20pt
\newdimen\Thickness \Thickness=0.4pt
\def\Square#1{\hbox{\vrule width \Thickness
     \vbox to \Squaresize{\hrule height \Thickness\vss
        \hbox to \Squaresize{\hss#1\hss}
     \vss\hrule height\Thickness}
\unskip\vrule width \Thickness}
\kern-\Thickness}
\def\Vsquare#1{\vbox{\Square{$#1$}}\kern-\Thickness}
\title[From quasi-symmetric to Schur expansions]{From quasi-symmetric to Schur expansions \\ with applications to \\
symmetric chain decompositions and plethysm}
\author[Orellana]{Rosa Orellana}
\address[R. Orellana]{Mathematics Department, Dartmouth College,
Hanover, NH 03755, U.S.A.}
\email{Rosa.C.Orellana@dartmouth.edu}
\urladdr{\href{https://math.dartmouth.edu/~orellana/}{https://math.dartmouth.edu/~orellana/}}
\author[Saliola]{Franco Saliola}
\address[F. Saliola]{LACIM, D\'epartement de math\'ematiques,
Universit\'e du Qu\'ebec \`a Montr\'eal, Canada}
\email{saliola.franco@uqam.ca}
\author[Schilling]{Anne Schilling}
\address[A. Schilling]{Department of Mathematics, University of California, One Shields
Avenue, Davis, CA 95616-8633, U.S.A.}
\email{anne@math.ucdavis.edu}
\urladdr{\href{http://www.math.ucdavis.edu/~anne}{http://www.math.ucdavis.edu/~anne}}
\author[Zabrocki]{Mike Zabrocki}
\address[M. Zabrocki]{Department of Mathematics and Statistics,  York University, 4700 Keele Street, Toronto,
Ontario M3J 1P3, Canada}
\email{zabrocki@mathstat.yorku.ca}
\urladdr{\href{http://garsia.math.yorku.ca/~zabrocki/}{http://garsia.math.yorku.ca/~zabrocki/}}
\keywords{quasisymmetric functions, Schur functions, (quasi-)Kostka matrix, plethysm, symmetric chain decomposition}
\begin{document}

\begin{abstract}
It is an important problem in algebraic combinatorics to deduce the Schur function expansion of a symmetric function whose
expansion in terms of the fundamental quasisymmetric function is known. For example,
formulas are known for the fundamental expansion of a Macdonald symmetric function and for the plethysm of two
Schur functions, while the Schur expansions of these expressions are still elusive.
Based on work of Egge, Loehr and Warrington, Garsia and Remmel provided a method to obtain the Schur expansion
from the fundamental expansion by replacing each quasisymmetric function by a Schur function
(not necessarily indexed by a partition) and using straightening rules to obtain the Schur
expansion. Here we provide a new method that only involves the coefficients of the quasisymmetric functions
indexed by partitions and the quasi-Kostka matrix. As an application, we identify the lexicographically largest term
in the Schur expansion of the plethysm of two Schur functions. We provide the Schur expansion of
$s_w[s_h](x,y)$ for $w=2,3,4$ using novel symmetric chain decompositions of Young's lattice for partitions in a $w\times h$ box.
For $w=4$, this is the first known combinatorial expression for the coefficient of
$s_{\lambda}$ in $s_{w}[s_{h}]$ for two-row partitions $\lambda$, and for $w=3$ the combinatorial expression is new.
\end{abstract}

\maketitle

\section{Introduction}

In 1984 Gessel \cite{Gessel.1984} introduced the fundamental quasisymmetric functions
$F_\alpha$ and showed that the coefficient of the quasisymmetric
$F_\alpha$ in a Schur function $s_\lambda$ is the number of
standard tableaux of shape $\lambda$ with descent composition equal to $\alpha$.
In 2010, Egge, Loehr and Warrington~\cite{ELW.2010} gave a combinatorial formula for the
right inverse of the transition matrix between the Schur basis of the symmetric functions
and the fundamental quasisymmetric function basis.
This formula provides a method for transforming the
expansion of a symmetric function of homogeneous degree $n$
in terms of Gessel's fundamental quasisymmetric functions into an expansion in terms of Schur functions.

Garsia and Remmel~\cite{GR.2018} reformulated this as follows.
Let $F_\alpha$ denote the fundamental quasisymmetric function indexed by the composition $\alpha$.
If a symmetric function $f$ expands as
\begin{equation}
\label{equation.quasi expansion}
    f = \sum_{\alpha \models n} c_\alpha F_\alpha
\end{equation}
in the basis of quasisymmetric functions, then
\[
    f = \sum_{\alpha \models n} c_\alpha s_\alpha,
\]
where $s_\alpha$ are Schur functions indexed by compositions. To obtain the expansion in terms of Schur functions $s_\lambda$ indexed by
partitions, one needs to use a straightening rule for Schur functions. More recently, Gessel~\cite{Gessel.2019} gave a sign-reversing involution proof
of this formula.

There are many important examples in symmetric function theory,
algebraic combinatorics and representation theory, where a quasisymmetric
expansion of a symmetric function is known, but the Schur expansion remains
elusive. For example, combinatorial expressions for the quasisymmetric expansion of
LLT polynomials, modified Macdonald polynomials~\cite{HHL.2005},
characters of higher Lie modules (or Thrall's problem)~\cite{GR.1993}
or the plethysm of two Schur functions~\cite{LW.2012} exist and it is
desirable to deduce the Schur expansions from these quasisymmetric expansions.

Here we present a novel method to transition from a fundamental quasisymmetric expansion
to a Schur expansion that requires only the coefficients
$c_\lambda$ in~\eqref{equation.quasi expansion} indexed by partitions $\lambda$.
We do this by giving a right inverse of the transition matrix between the
Schur and fundamental quasisymmetric function basis that is different from the
one given by Egge, Loehr and Warrington~\cite{ELW.2010}. Our right inverse
has the property that only the rows indexed by partitions have non-zero entries.
We present our interpretation in terms of the quasi-Kostka matrix that counts
the number of quasi-Yamanouchi tableaux~\cite{AS.2017,Wang.2019}.
This interpretation is equivalent to, but different from, the
fundamental expansion of a Schur function
provided by Gessel~\cite{Gessel.1984}.

As an application of our methods, we identify the lexicographically largest term that occurs in the plethysm of two
Schur functions starting from the quasisymmetric expansion provided by Loehr and Warrington~\cite{LW.2012}.
This question was posed to us by Panova and Zhao~\cite{PZ.2023} in the quest to determine whether
the plethysm $s_\lambda[s_\mu]$ of two Schur functions has a saturated Newton polytope.
Our method gives a simple, alternative proof of~\cite[Corollary 9.1]{PW.2019} and~\cite[Theorem 4.2]{I.2011}.
In addition, we compute the Schur expansion of $s_w[s_h](x,y)$ for $w=2,3,4$
using novel symmetric chain decompositions of Young's lattice for partitions in a $w\times h$ box.
Our symmetric chain decompositions have properties (that we call the \emph{extension}, \emph{restriction} and \emph{pattern} properties) that existing
symmetric chain decompositions~\cite{Lindstrom1980,OHara1990,Riess1978,Wen2004,West1980} do not have.
Our explicit combinatorial expressions for the Schur coefficients for $w=3$ and $w=4$ are new.

The paper is organized as follows. In Section~\ref{section.F}, we review Gessel's quasisymmetric functions and give the
expansion of Schur functions in terms of quasisymmetric functions using standard tableaux and quasi-Yamanouchi tableaux.
In Section~\ref{section.main}, we show that the Schur expansion of a symmetric function can be deduced from the quasisymmetric
expansion knowing only the coefficients of terms indexed by partitions using the quasi-Kostka matrix (see Theorem~\ref{theorem.main}).
We study the inverse of the quasi-Kostka matrix in Section~\ref{section.inverse} in analogy to results by Egge, Loehr and Warrington~\cite{ELW.2010}
for the inverse Kostka matrix. (It is important to note that Egge, Loehr and Warrington invert a rectangular matrix, whereas the
quasi-Kostka matrix is a square matrix.) We conclude in Section~\ref{section.applications} with applications.
In Section~\ref{section.leading term} we determine the leading term of the plethysm of two Schur functions and
in Section~\ref{section.two variables} we compute the Schur expansion of $s_w[s_h](x,y)$ for $w=2,3,4$
(see Corollaries~\ref{cor.b w=3} and~\ref{cor.b w=4}) and give the novel symmetric chain decompositions.

\section*{Acknowledgement}
We thank Greta Panova and Chenchen Zhao for discussions and sharing preprint~\cite{PZ.2023} with us.
We thank Laura Colmenarejo for initial collaboration on symmetric chain decompositions of $L(w,h)$.
We thank Alvaro Gutierrez Caceres for private communication in March 2024 on $\mathfrak{sl}_2$-crystal structures for $\mathsf{Alt}^w \mathsf{Sym}^h V$
for $w = 2, 3, 4$, and $\mathsf{dim} V = 2$. This is related to the symmetric chain decompositions on $L(w,h)$ for $w=2,3,4$
in Section~\ref{section.two variables}.

AS thanks IPAM for hospitality at the program ``Geometry, Statistical Mechanics, and Integrability'', where this work was completed.

This work was partially supported by NSF grants DMS--2153998, DMS--2053350 and NSERC/CNSRG.

\section{Quasisymmetric functions}
\label{section.F}

Gessel~\cite{Gessel.1984} introduced the \defn{fundamental quasisymmetric functions} $F_\alpha$ indexed by compositions $\alpha$ as
\begin{equation}
	F_\alpha = \sum_{\beta \preccurlyeq \alpha} M_\beta,
\end{equation}
where $\beta \preccurlyeq \alpha$ denotes that composition $\beta$ is a refinement of composition $\alpha$, and
\[
	M_\beta = \sum_{i_1<i_2<\cdots<i_\ell} x_{i_1}^{\beta_1} x_{i_2}^{\beta_2} \cdots x_{i_\ell}^{\beta_\ell}.
\]

For a standard tableau $T$, the letter $i$ is a \defn{descent} if the letter $i+1$ is in a higher row of the tableau (in French notation). Denote the
descents of $T$ by $d_1<d_2<\cdots<d_k$. The \defn{descent composition} is defined as
\[
	\mathsf{desComp}(T) = ( d_1, d_2 - d_1, \ldots, d_k - d_{k-1}, n-d_k),
\]
where $n$ is the size of the shape of $T$.

The expansion of a Schur function in terms of the fundamental quasisymmetric functions is given by~\cite{Gessel.1984,Gessel.2019}
\begin{equation}
\label{equation.Schur quasi}
	s_\lambda = \sum_{T \in \mathsf{SYT}(\lambda)} F_{\mathsf{desComp}(T)},
\end{equation}
where $\mathsf{SYT}(\lambda)$ is the set of standard tableaux of shape $\lambda$.

\begin{definition}[\cite{AS.2017,Wang.2019}]
A semistandard Young tableau $T$ is a \defn{quasi-Yamanouchi tableau} if when $i>1$ appears in the tableau, some instance
of $i$ is in a higher row than some instance of $i - 1$ for all $i$. Denote the set of all quasi-Yamanouchi tableaux of shape $\lambda$
by $\mathsf{QYT}(\lambda)$ and by $\mathsf{QYT}_{\leqslant m}(\lambda)$ the subset of $\mathsf{QYT}(\lambda)$ with largest entry at most $m$.
\end{definition}

\begin{example}
\label{example.quasiYamanouchi}
The only quasi-Yamanouchi tableaux of shape $(4,2,1)$ and weight $(2,2,2,1)$ are
\[
\raisebox{1.2cm}{
{\def\lr#1{\multicolumn{1}{|@{\hspace{.6ex}}c@{\hspace{.6ex}}|}{\raisebox{-.3ex}{$#1$}}}
\raisebox{-.6ex}{$\begin{array}[t]{*{4}c}\cline{1-1}
\lr{4}\\\cline{1-2}
\lr{2}&\lr{3}\\\cline{1-4}
\lr{1}&\lr{1}&\lr{2}&\lr{3}\\\cline{1-4}
\end{array}$}
}}
\qquad \text{and} \qquad
\raisebox{1.2cm}{
{\def\lr#1{\multicolumn{1}{|@{\hspace{.6ex}}c@{\hspace{.6ex}}|}{\raisebox{-.3ex}{$#1$}}}
\raisebox{-.6ex}{$\begin{array}[t]{*{4}c}\cline{1-1}
\lr{3}\\\cline{1-2}
\lr{2}&\lr{4}\\\cline{1-4}
\lr{1}&\lr{1}&\lr{2}&\lr{3}\\\cline{1-4}
\end{array}$}
}}.
\]
On the other hand
\[
\raisebox{1cm}{
{\def\lr#1{\multicolumn{1}{|@{\hspace{.6ex}}c@{\hspace{.6ex}}|}{\raisebox{-.3ex}{$#1$}}}
\raisebox{-.6ex}{$\begin{array}[t]{*{4}c}\cline{1-1}
\lr{4}\\\cline{1-2}
\lr{3}&\lr{3}\\\cline{1-4}
\lr{1}&\lr{1}&\lr{2}&\lr{2}\\\cline{1-4}
\end{array}$}
}}
\]
is not a quasi-Yamanouchi tableau since there is no $i=2$ in a higher row than some instance of 1.
\end{example}

The expansion in~\eqref{equation.Schur quasi} can be reformulated in terms of quasi-Yamanouchi tableaux via the standardization map.
Let $T$ be a semistandard Young tableau of weight $\alpha=(\alpha_1,\ldots,\alpha_\ell)$. Then the \defn{standardization} $\mathsf{standard}(T)$
is obtained from $T$ by replacing the letters $i$ in $T$ from left to right by $\alpha_1+\alpha_2+\cdots+\alpha_{i-1}+1,\ldots,
\alpha_1+\alpha_2+\cdots+\alpha_i$, which is a standard tableau of the same shape. Conversely, for a standard tableau $T$ with descents
$d_1<d_2<\cdots<d_k$, the \defn{de-standardization} $\mathsf{destandard}(T)$
is obtained from $T$ by replacing the letters $d_{i-1}+1,d_{i-1}+2,\ldots,d_i$
by $i$, where by convention $d_0=0$ and $d_{k+1}$ is the size of the shape of $T$. Note that
\[
	\mathsf{destandard} \colon \mathsf{SYT}(\lambda) \to \mathsf{QYT}(\lambda)
\]
is a bijection and its inverse is the map $\mathsf{standard}$. Hence we have
\begin{equation}
\label{equation.QYT expansion}
	s_\lambda = \sum_{T \in \mathsf{QYT}(\lambda)} F_{\mathsf{wt}(T)}
\end{equation}
where $\mathsf{wt}(T)$ represents the weight of the tableau $T$.
When restricting to Schur polynomials in $m$ variables this becomes~\cite[Theorem 2.7]{AS.2017}
\begin{equation}
\label{equation.finite variables}
	s_\lambda(x_1,\ldots,x_m) = \sum_{T \in \mathsf{QYT}_{\leqslant m}(\lambda)} F_{\mathsf{wt}(T)}(x_1,\ldots,x_m).
\end{equation}

\section{From quasisymmetric to Schur expansions}
\label{section.main}

In this section, we describe a novel way to obtain the Schur expansion of a function $f$ as in~\eqref{equation.quasi expansion}, whose expansion
in terms of the fundamental quasisymmetric functions is known. We begin by defining the quasi-Kostka matrix.  Note that we order the partitions of
$n$ in reverse lexicographic order.

\begin{definition}
Fix a positive integer $n$.
Let $\mathsf{QK}_{\lambda,\alpha}$ be the number of tableaux in $\mathsf{QYT}(\lambda)$ of weight $\alpha$, where $\lambda \vdash n$ is
a partition of $n$ and $\alpha \models n$ is a composition of $n$.
Define the \defn{quasi-Kostka matrix} as
\[
	\mathsf{Q} = (\mathsf{QK}_{\lambda,\mu})_{\lambda,\mu},
\]
where $\lambda$ and $\mu$ are partitions of $n$.
\end{definition}

\begin{example}
From Example~\ref{example.quasiYamanouchi}, we have $\mathsf{QK}_{(4,2,1),(2,2,2,1)}=2$.
\end{example}

\begin{example}
\label{example.QK7}
Order the partitions of $n=7$ in reverse lexicographic order (where we use the frequency notation $a^m$ if part $a$ appears $m$ times)
\[
 	(7), (61), (52), (51^2), (43), (421), (41^3),(3^2 1),(32^2),(321^2),(31^4), (2^31), (2^21^3), (21^5), (1^7).
\]
Then (where $\mathsf{QK}_{(4,2,1),(2,2,2,1)}=2$ is highlighted in red)
\[
\mathsf{Q} =
\left(\begin{array}{rrrrrrrrrrrrrrr}
1 & 0 & 0 & 0 & 0 & 0 & 0 & 0 & 0 & 0 & 0 & 0 & 0 & 0 & 0 \\
0 & 1 & 1 & 0 & 1 & 0 & 0 & 0 & 0 & 0 & 0 & 0 & 0 & 0 & 0 \\
0 & 0 & 1 & 0 & 1 & 1 & 0 & 1 & 1 & 0 & 0 & 0 & 0 & 0 & 0 \\
0 & 0 & 0 & 1 & 0 & 1 & 0 & 1 & 1 & 0 & 0 & 0 & 0 & 0 & 0 \\
0 & 0 & 0 & 0 & 1 & 0 & 0 & 1 & 1 & 0 & 0 & 1 & 0 & 0 & 0 \\
0 & 0 & 0 & 0 & 0 & 1 & 0 & 1 & 2 & 1 & 0 & \textcolor{darkred}{2} & 0 & 0 & 0 \\
0 & 0 & 0 & 0 & 0 & 0 & 1 & 0 & 0 & 1 & 0 & 1 & 0 & 0 & 0 \\
0 & 0 & 0 & 0 & 0 & 0 & 0 & 1 & 1 & 0 & 0 & 2 & 0 & 0 & 0 \\
0 & 0 & 0 & 0 & 0 & 0 & 0 & 0 & 1 & 0 & 0 & 2 & 0 & 0 & 0 \\
0 & 0 & 0 & 0 & 0 & 0 & 0 & 0 & 0 & 1 & 0 & 2 & 1 & 0 & 0 \\
0 & 0 & 0 & 0 & 0 & 0 & 0 & 0 & 0 & 0 & 1 & 0 & 1 & 0 & 0 \\
0 & 0 & 0 & 0 & 0 & 0 & 0 & 0 & 0 & 0 & 0 & 1 & 0 & 0 & 0 \\
0 & 0 & 0 & 0 & 0 & 0 & 0 & 0 & 0 & 0 & 0 & 0 & 1 & 0 & 0 \\
0 & 0 & 0 & 0 & 0 & 0 & 0 & 0 & 0 & 0 & 0 & 0 & 0 & 1 & 0 \\
0 & 0 & 0 & 0 & 0 & 0 & 0 & 0 & 0 & 0 & 0 & 0 & 0 & 0 & 1
\end{array}\right).
\]
\end{example}

Equation~\eqref{equation.QYT expansion} can be rewritten as
\begin{equation}
\label{equation.s QK F}
	s_\lambda = \sum_{\alpha \models |\lambda|} \mathsf{QK}_{\lambda,\alpha} F_\alpha.
\end{equation}

\begin{proposition}
\label{proposition.prop QK}
For a partition $\lambda \vdash n$ and a composition $\alpha \models n$, we have
\begin{enumerate}
\item $\mathsf{QK}_{\lambda,\lambda}=1$;
\item $\mathsf{QK}_{\lambda,\alpha}=0$ if $\lambda<\alpha$ in dominance or lexicographic order;
\item The quasi-Kostka matrix $\mathsf{Q}$ is unit upper-triangular (if rows and columns are indexed by partitions of $n$ in reverse lexicographic order).
\end{enumerate}
\end{proposition}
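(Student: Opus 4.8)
The three assertions are tightly linked, and the plan is to derive all of them from a single elementary fact about semistandard tableaux, which I would isolate as a lemma: \emph{if $T$ is a semistandard Young tableau of shape $\lambda \vdash n$ and weight $\alpha \models n$, then $\alpha_1 + \cdots + \alpha_k \le \lambda_1 + \cdots + \lambda_k$ for every $k \ge 1$, and equality holds for all $k$ if and only if $\alpha = \lambda$ and $T = U_\lambda$}, where $U_\lambda$ denotes the tableau whose $i$-th row consists entirely of the entry $i$. Since $\mathsf{QYT}(\lambda) \subseteq \mathsf{SSYT}(\lambda)$, we have $\mathsf{QK}_{\lambda,\alpha} \le K_{\lambda,\alpha}$, the ordinary Kostka number, so this lemma applies verbatim to quasi-Yamanouchi tableaux, and Proposition~\ref{proposition.prop QK} will fall out of it once we check that $U_\lambda$ is itself quasi-Yamanouchi.

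To prove the lemma I would use only column-strictness: in any column the entry in row $i$ is at least $i$, so every cell of $T$ containing an entry $\le k$ lies in one of the first $k$ rows of $\lambda$. Counting the cells with entry $\le k$ on one hand ($\alpha_1 + \cdots + \alpha_k$) and bounding them by the number of cells in the first $k$ rows of $\lambda$ ($\lambda_1 + \cdots + \lambda_k$, with zeros adjoined to $\lambda$ if needed) gives the inequality. If equality holds for all $k$, then telescoping forces $\alpha = \lambda$, and the set of cells with entry $\le k$ then has the same size as --- hence equals --- the set of cells in the first $k$ rows, for every $k$; comparing the cases $k=i-1$ and $k=i$ shows every cell in row $i$ has entry exactly $i$, i.e.\ $T = U_\lambda$. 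For part (1): $U_\lambda$ is quasi-Yamanouchi because, whenever $i > 1$ occurs, every $i$ sits in row $i$, strictly above every $i-1$ in row $i-1$; hence $\mathsf{QK}_{\lambda,\lambda} \ge 1$, and the equality clause of the lemma shows $U_\lambda$ is the unique semistandard --- a fortiori the unique quasi-Yamanouchi --- tableau of shape $\lambda$ and weight $\lambda$, so $\mathsf{QK}_{\lambda,\lambda} = 1$.

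For part (2), suppose $\mathsf{QK}_{\lambda,\alpha} \ne 0$ and pick $T \in \mathsf{QYT}(\lambda)$ of weight $\alpha$; the lemma gives $\alpha_1 + \cdots + \alpha_k \le \lambda_1 + \cdots + \lambda_k$ for all $k$, i.e.\ $\alpha$ is weakly below $\lambda$ in the partial-sum (dominance) order on compositions, which rules out $\lambda < \alpha$ in dominance by antisymmetry. For the lexicographic case, observe that if moreover $\alpha \ne \lambda$ and $j$ is the first index where $\alpha_j \ne \lambda_j$, then the partial sums agree up to index $j-1$, so $\alpha_j < \lambda_j$ and thus $\alpha <_{\mathrm{lex}} \lambda$; hence $\mathsf{QK}_{\lambda,\alpha} \ne 0$ already forces $\alpha \le_{\mathrm{lex}} \lambda$, and $\lambda <_{\mathrm{lex}} \alpha$ again gives $\mathsf{QK}_{\lambda,\alpha} = 0$. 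Part (3) is then immediate: ordering the partitions of $n$ in reverse lexicographic order places lexicographically larger partitions earlier, so an entry $\mathsf{QK}_{\lambda,\mu}$ strictly below the diagonal has $\mu >_{\mathrm{lex}} \lambda$ and vanishes by (2), while the diagonal entries equal $1$ by (1), so $\mathsf{Q}$ is unit upper-triangular. I do not expect a genuine obstacle here; the only care needed is the bookkeeping that converts the single partial-sum inequality into the two stated orders when $\alpha$ is a bona fide composition, and tracking the equality case precisely enough to get uniqueness in (1).
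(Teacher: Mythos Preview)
Your proof is correct and follows essentially the same approach as the paper: both argue via the inequality $\mathsf{QK}_{\lambda,\alpha} \le K_{\lambda,\alpha}$ and the standard facts about Kostka numbers, and both identify $U_\lambda$ as the unique element of $\mathsf{SSYT}(\lambda,\lambda)$ and check it is quasi-Yamanouchi. The only difference is that the paper cites these Kostka facts as well-known (referring to Stanley), whereas you prove them directly from column-strictness, making your argument more self-contained.
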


\begin{proof}
Let $K_{\lambda,\alpha} = |\mathsf{SSYT}(\lambda,\alpha)|$ be the Kostka number, which is the number of semistandard Young tableaux of shape
$\lambda$ and weight $\alpha$. Since quasi-Yamanouchi tableaux are semistandard tableaux with extra conditions, we have
$0 \leqslant \mathsf{QK}_{\lambda,\alpha} \leqslant K_{\lambda,\alpha}$. It is well-known (see for example~\cite{Stanley}) that $K_{\lambda,\alpha}=0$
for $\lambda<\alpha$ in dominance and lexicographic order. This proves part 2.

The only semistandard tableau in $\mathsf{SSYT}(\lambda,\lambda)$ is the tableau with $i$'s in row $i$. This tableau is also quasi-Yamanouchi,
which implies part 1. Part 3 is a direct consequence of parts 1 and 2.
\end{proof}

\begin{corollary}
\label{cor.lex}
For any partition $\lambda$,
\[
	s_\lambda = F_\lambda + \text{terms that are strictly smaller than $\lambda$ in lexicographic order}.
\]
\end{corollary}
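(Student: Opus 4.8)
The plan is to read the statement directly off the quasi-Kostka expansion of $s_\lambda$ together with the triangularity established in Proposition~\ref{proposition.prop QK}. Concretely, I would start from Equation~\eqref{equation.s QK F}, which writes
\[
	s_\lambda = \sum_{\alpha \models n} \mathsf{QK}_{\lambda,\alpha}\, F_\alpha, \qquad n = |\lambda|,
\]
the sum ranging over \emph{all} compositions $\alpha$ of $n$, since the weight of a quasi-Yamanouchi tableau of shape $\lambda$ is in general only a composition.

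Next I would isolate the coefficient of $F_\lambda$. Part~(1) of Proposition~\ref{proposition.prop QK} gives $\mathsf{QK}_{\lambda,\lambda}=1$, so $F_\lambda$ occurs in $s_\lambda$ with coefficient exactly $1$. It then remains to check that every other composition $\alpha$ contributing to the sum is lexicographically smaller than $\lambda$. For this I would invoke part~(2): if $\mathsf{QK}_{\lambda,\alpha}\neq 0$, then it is not the case that $\lambda < \alpha$ in lexicographic order. Since lexicographic order is a total order on finite sequences, the only remaining possibilities are $\alpha = \lambda$ or $\alpha <_{\mathrm{lex}} \lambda$, and $\alpha =_{\mathrm{lex}} \lambda$ forces $\alpha = \lambda$ because both are sequences of positive parts without trailing zeros. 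Collecting terms yields
\[
	s_\lambda = F_\lambda + \sum_{\substack{\alpha \models n \\ \alpha <_{\mathrm{lex}} \lambda}} \mathsf{QK}_{\lambda,\alpha}\, F_\alpha,
\]
which is precisely the assertion.

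I do not expect any genuine obstacle here: the corollary is an immediate consequence of Proposition~\ref{proposition.prop QK}. The only point requiring a moment's care is that part~(2) is phrased as a vanishing statement for the strict inequality $\lambda < \alpha$, so one must appeal to totality of the lexicographic order (and the absence of trailing zeros) to rule out an incomparable term and conclude that every nonvanishing term other than $F_\lambda$ is strictly smaller than $F_\lambda$. Equivalently, one may package the argument as: $\lambda$ is the lexicographically largest composition $\alpha$ with $\mathsf{QK}_{\lambda,\alpha} \neq 0$, and its coefficient equals $1$.
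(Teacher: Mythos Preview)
Your argument is correct and matches the paper's approach: the corollary is stated immediately after Proposition~\ref{proposition.prop QK} without a separate proof, precisely because it follows at once from parts (1) and (2) together with Equation~\eqref{equation.s QK F}, exactly as you spell out. Your care about totality of the lexicographic order on compositions is the right way to fill in the one-line deduction the paper leaves implicit.
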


\begin{corollary}
\label{cor.b=c}
If
\begin{equation}
\label{equation.s-F}
	f = \sum_{\lambda \vdash n} b_\lambda s_\lambda = \sum_{\alpha \models n} c_\alpha F_\alpha
\end{equation}
is a symmetric function, let $\mathsf{supp}(f) = \{ \alpha \models n \mid c_\alpha \neq 0 \}$.
Let $\nu$ be the maximum element in $\mathsf{supp}(f)$ with respect to the lexicographic order
on compositions. Then $\nu \vdash n$ and $b_\nu = c_\nu$.
\end{corollary}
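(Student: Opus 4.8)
The plan is to run a unitriangularity argument in the lexicographic order on compositions, with Corollary~\ref{cor.lex} supplying the triangularity. If $f = 0$ then $\mathsf{supp}(f) = \emptyset$ and there is nothing to prove, so assume $f \neq 0$ and let $\mu$ be the lexicographically largest partition of $n$ with $b_\mu \neq 0$; this exists because the Schur expansion of $f$ is a finite nonzero sum.

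First I would feed Corollary~\ref{cor.lex} into the Schur expansion: for each partition $\lambda$ with $b_\lambda \neq 0$ we may write $s_\lambda = F_\lambda + \sum_{\alpha <_{\mathrm{lex}} \lambda} e^{(\lambda)}_\alpha F_\alpha$ for suitable integers $e^{(\lambda)}_\alpha$. Multiplying by $b_\lambda$ and summing, and using that every such $\lambda$ satisfies $\lambda \leqslant_{\mathrm{lex}} \mu$, I observe that the coefficient of $F_\mu$ receives a contribution only from the term $b_\mu s_\mu$ (for $\lambda <_{\mathrm{lex}} \mu$ every composition appearing in the $F$-expansion of $s_\lambda$ is $\leqslant_{\mathrm{lex}} \lambda <_{\mathrm{lex}} \mu$). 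Hence $f = b_\mu F_\mu + \sum_{\alpha <_{\mathrm{lex}} \mu} d_\alpha F_\alpha$ for some coefficients $d_\alpha$.

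Since $\{F_\alpha\}_{\alpha \models n}$ is a basis, this is the expansion of $f$ in the fundamental basis, so comparison with $f = \sum_{\alpha \models n} c_\alpha F_\alpha$ yields $c_\mu = b_\mu \neq 0$ and $c_\alpha = 0$ for all $\alpha >_{\mathrm{lex}} \mu$. Therefore $\mu \in \mathsf{supp}(f)$ while every element of $\mathsf{supp}(f)$ is $\leqslant_{\mathrm{lex}} \mu$, so the maximum $\nu$ of $\mathsf{supp}(f)$ equals $\mu$. In particular $\nu = \mu$ is a partition of $n$, and $b_\nu = b_\mu = c_\mu = c_\nu$.

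I do not anticipate a genuine obstacle here: Corollary~\ref{cor.lex} (itself a consequence of the unit upper-triangularity of the quasi-Kostka matrix in Proposition~\ref{proposition.prop QK}) does all the work. The only point needing a moment's care is ruling out that cancellations among the $s_\lambda$ could produce some $F_\alpha$ with $\alpha >_{\mathrm{lex}} \mu$; this is exactly what the bound ``every composition in the $F$-expansion of $s_\lambda$ is $\leqslant_{\mathrm{lex}} \lambda$'' prevents, once $\mu$ is chosen maximal among partitions with nonzero Schur coefficient.
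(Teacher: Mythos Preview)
Your proof is correct and is exactly the argument the paper has in mind: Corollary~\ref{cor.b=c} is stated without a separate proof, as an immediate consequence of Corollary~\ref{cor.lex}, and your write-up simply makes that unitriangularity argument explicit.
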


Our main result states that we can obtain the coefficients $b_\lambda$ in~\eqref{equation.s-F} from the coefficients $c_\mu$ in the quasisymmetric
expansion for partitions $\mu$ (instead of needing the coefficients $c_\alpha$ for all compositions $\alpha$).
A similar observation was made in~\cite[Section 6.2]{MG.2023}.

\begin{theorem}
\label{theorem.main}
For a symmetric function $f$ as in~\eqref{equation.s-F}, we have
\begin{equation}
\label{equation.b in terms of c}
	b_\lambda = \sum_{\mu \vdash n} \mathsf{Q}^{-1}_{\mu,\lambda} c_\mu.
\end{equation}
\end{theorem}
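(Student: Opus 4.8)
The plan is to reduce the statement to a finite-dimensional linear-algebra computation, using that $\{F_\alpha : \alpha \models n\}$ is a basis of the degree-$n$ quasisymmetric functions together with the unitriangularity of $\mathsf{Q}$ established in Proposition~\ref{proposition.prop QK}.

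First I would expand each Schur function on the right side of~\eqref{equation.s-F} by means of~\eqref{equation.s QK F}, obtaining
\[
  \sum_{\alpha \models n} c_\alpha F_\alpha
  \;=\; f
  \;=\; \sum_{\lambda \vdash n} b_\lambda s_\lambda
  \;=\; \sum_{\alpha \models n} \Bigl( \sum_{\lambda \vdash n} b_\lambda \, \mathsf{QK}_{\lambda,\alpha} \Bigr) F_\alpha .
\]
Since the $F_\alpha$ are linearly independent, comparing coefficients forces
$c_\alpha = \sum_{\lambda \vdash n} b_\lambda \, \mathsf{QK}_{\lambda,\alpha}$ for every composition $\alpha \models n$; this is just the transpose relation between the two change-of-basis matrices, with the extreme case recorded in Corollary~\ref{cor.b=c}.

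Next I would restrict attention to the equations indexed by \emph{partitions}. Taking $\alpha = \mu$ with $\mu \vdash n$ gives $c_\mu = \sum_{\lambda \vdash n} b_\lambda \, \mathsf{Q}_{\lambda,\mu}$; writing $b = (b_\lambda)_{\lambda \vdash n}$ and $c = (c_\mu)_{\mu \vdash n}$ as row vectors indexed by partitions of $n$, this is the matrix identity $c = b\,\mathsf{Q}$. By Proposition~\ref{proposition.prop QK}(3) the matrix $\mathsf{Q}$ is unit upper-triangular with respect to reverse lexicographic order, hence invertible over $\mathbb{Z}$; multiplying on the right by $\mathsf{Q}^{-1}$ yields $b = c\,\mathsf{Q}^{-1}$, that is, $b_\lambda = \sum_{\mu \vdash n} \mathsf{Q}^{-1}_{\mu,\lambda}\, c_\mu$, which is exactly~\eqref{equation.b in terms of c}.

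I do not expect a genuine obstacle; the only conceptual point requiring care is the logical one in the previous paragraph. The relation $c_\alpha = \sum_\lambda b_\lambda\,\mathsf{QK}_{\lambda,\alpha}$ runs over \emph{all} compositions $\alpha$, but the sub-collection of equations whose column index is a partition already constitutes a square invertible system, so the coefficients $c_\mu$ with $\mu \vdash n$ suffice to recover every $b_\lambda$ and the remaining $c_\alpha$ are redundant for this purpose. This is precisely the reason the rectangular right inverse of Egge--Loehr--Warrington can here be replaced by the honest inverse of the square matrix $\mathsf{Q}$.
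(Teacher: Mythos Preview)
Your proof is correct and follows essentially the same approach as the paper. The paper derives $c_\mu = \sum_{\lambda \vdash n} b_\lambda\, \mathsf{QK}_{\lambda,\mu}$ by expanding via~\eqref{equation.s QK F} and then inverts using the unitriangularity of $\mathsf{Q}$, exactly as you do; the only difference is that the paper first includes a brief inductive ``peeling off the leading term'' argument via Corollary~\ref{cor.b=c} to motivate why the partition-indexed $c_\mu$ alone determine the $b_\lambda$, but this is not logically needed once the square system $c = b\,\mathsf{Q}$ is inverted.
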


\begin{proof}
By Corollary~\ref{cor.b=c}, if $\nu$ is the lexicographically largest element in $\mathsf{supp}(f)$, then $\nu$ is a partition and $b_\nu = c_\nu$.
Consider $f-b_\nu s_\nu$, which is a symmetric function. The maximal element in $\mathsf{supp}(f-b_\nu s_\nu)$
is strictly smaller than $\nu$ in lexicographic order. Repeat this argument. Since the maximal element in lexicographic order in the support strictly
decreases and since $\{ F_\alpha\}$ is a basis for quasisymmetric functions, this shows that $b_\lambda$ only depends on $c_\mu$ for partitions $\mu$.

Since by~\eqref{equation.s QK F}
\[
	f = \sum_{\lambda \vdash n} b_\lambda s_\lambda
	= \sum_{\lambda \vdash n} b_\lambda \Bigl( \sum_{\mu \vdash n} \mathsf{QK}_{\lambda,\mu} F_\mu +
	\sum_{\substack{\alpha \models n\\ \alpha \text{ not partition}}} \mathsf{QK}_{\lambda,\alpha} F_\alpha \Bigr)
\]
we have that $c_\mu = \sum_{\lambda \vdash n} b_\lambda \mathsf{QK}_{\lambda,\mu}$ or, equivalently,
Equation~\eqref{equation.b in terms of c} is the inverse of this relationship.
\end{proof}

\section{Inverse quasi-Kostka matrix}
\label{section.inverse}

Given the importance of Theorem~\ref{theorem.main}, we study the properties
of the inverse of the quasi-Kostka matrix $\mathsf{Q}^{-1}$ in
this section.

\begin{proposition}
\label{proposition.Qinverse}
For partitions $\lambda, \mu \vdash n$, we have
\begin{enumerate}
\item $\mathsf{Q}^{-1}_{\lambda,\lambda}=1$;
\item $\mathsf{Q}^{-1}_{\mu,\lambda}=0$ if $\lambda>\mu$ in dominance or lexicographic order;
\item if $\lambda < \mu$,
\begin{equation}
\label{equation.Qinverse}
	\mathsf{Q}^{-1}_{\mu,\lambda} = \sum_{k \geqslant 1} (-1)^k \sum_{\mu > \mu^1 > \mu^2 > \cdots > \mu^k=\lambda}
	\mathsf{QK}_{\mu,\mu^1} \mathsf{QK}_{\mu^1,\mu^2} \cdots \mathsf{QK}_{\mu^{k-1},\lambda}.
\end{equation}
\end{enumerate}
\end{proposition}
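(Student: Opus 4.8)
The plan is to deduce all three parts from the unit upper-triangularity of $\mathsf{Q}$ established in Proposition~\ref{proposition.prop QK}, using the elementary Neumann-series formula for the inverse of a unipotent triangular matrix.

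For parts (1) and (2): by Proposition~\ref{proposition.prop QK}(3), when the partitions of $n$ are listed in reverse lexicographic order the matrix $\mathsf{Q}$ is unit upper-triangular, hence so is $\mathsf{Q}^{-1}$ in the same ordering. The ``unit'' part gives $\mathsf{Q}^{-1}_{\lambda,\lambda}=1$, which is (1). For (2), upper-triangularity gives $\mathsf{Q}^{-1}_{\mu,\lambda}=0$ whenever $\lambda$ strictly precedes $\mu$ in the reverse lexicographic order, that is, whenever $\lambda>\mu$ in lexicographic order; and since $\lambda>\mu$ in dominance order implies $\lambda>\mu$ in lexicographic order, the dominance statement follows from the lexicographic one.

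For part (3): write $\mathsf{Q}=I+N$, where $I$ is the identity and $N$ has entries $N_{\nu,\nu'}=\mathsf{QK}_{\nu,\nu'}$ for $\nu\neq\nu'$ and $N_{\nu,\nu}=0$ (using $\mathsf{QK}_{\nu,\nu}=1$ from Proposition~\ref{proposition.prop QK}(1)). By Proposition~\ref{proposition.prop QK}(2) a nonzero entry $N_{\nu,\nu'}$ forces $\nu>\nu'$ in lexicographic order, so $N$ is strictly upper-triangular in the reverse lexicographic ordering; in particular $N$ is nilpotent, with $N^k=0$ once $k$ is at least the number of partitions of $n$. Hence the series
\[
	\mathsf{Q}^{-1}=(I+N)^{-1}=\sum_{k\geqslant 0}(-1)^k N^k
\]
is a finite sum. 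Expanding the matrix powers,
\[
	(N^k)_{\mu,\lambda}=\sum_{\nu^1,\ldots,\nu^{k-1}} N_{\mu,\nu^1}N_{\nu^1,\nu^2}\cdots N_{\nu^{k-1},\lambda},
\]
the sum running over partitions $\nu^1,\ldots,\nu^{k-1}$ of $n$; a summand vanishes unless $\mu>\nu^1>\cdots>\nu^{k-1}>\lambda$ in lexicographic order, in which case every factor of $N$ may be replaced by the corresponding entry of $\mathsf{QK}$. Thus $(N^k)_{\mu,\lambda}$ equals the inner chain-sum appearing in~\eqref{equation.Qinverse}. Taking the $(\mu,\lambda)$ entry of the series for $\lambda<\mu$ (so that the $k=0$ term does not contribute) gives exactly~\eqref{equation.Qinverse}.

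I do not expect a genuine obstacle here: the only care needed is bookkeeping — fixing which side of the diagonal is ``upper'' under the reverse lexicographic convention, and recording that lexicographic order refines dominance order so that the lexicographic versions of (2) and (3) imply the dominance versions. The mathematical content is just the standard inversion of a unipotent triangular matrix, reinterpreted combinatorially as an alternating sum over strictly decreasing chains of partitions.
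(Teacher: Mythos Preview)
Your proof is correct and follows essentially the same approach as the paper: parts (1) and (2) are read off from the unit upper-triangularity of $\mathsf{Q}$ (Proposition~\ref{proposition.prop QK}), and part (3) is obtained by writing $\mathsf{Q}=\mathsf{I}+\mathsf{N}$ with $\mathsf{N}$ nilpotent and expanding $\mathsf{Q}^{-1}=\sum_{k\geqslant 0}(-1)^k\mathsf{N}^k$. The paper's proof is in fact just the one-line version of what you wrote out in detail.
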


\begin{proof}
Parts 1 and 2 follow from the fact that $\mathsf{Q}$ is unit upper-triangular by Proposition~\ref{proposition.prop QK}.
Part 3 follows because if $\mathsf{Q} = \mathsf{I} + \mathsf{N}$, where $\mathsf{I}$ is the identity and $\mathsf{N}$
is nilpotent, then $\mathsf{Q}^{-1} = \mathsf{I} + \sum_{k \geqslant 1} (-1)^k \mathsf{N}^k$.
\end{proof}

\begin{example}
The inverse of $\mathsf{Q}$ of Example~\ref{example.QK7} for $n=7$ is
\[
\mathsf{Q}^{-1} =
\left(\begin{array}{rrrrrrrrrrrrrrr}
1 & 0 & 0 & 0 & 0 & 0 & 0 & 0 & 0 & 0 & 0 & 0 & 0 & 0 & 0 \\
0 & 1 & -1 & 0 & 0 & 1 & 0 & 0 & -1 & -1 & 0 & 2 & 1 & 0 & 0 \\
0 & 0 & 1 & 0 & -1 & -1 & 0 & 1 & 1 & 1 & 0 & -3 & -1 & 0 & 0 \\
0 & 0 & 0 & 1 & 0 & -1 & 0 & 0 & 1 & 1 & 0 & -2 & -1 & 0 & 0 \\
0 & 0 & 0 & 0 & 1 & 0 & 0 & -1 & 0 & 0 & 0 & 1 & 0 & 0 & 0 \\
0 & 0 & 0 & 0 & 0 & 1 & 0 & -1 & -1 & -1 & 0 & 4 & 1 & 0 & 0 \\
0 & 0 & 0 & 0 & 0 & 0 & 1 & 0 & 0 & -1 & 0 & 1 & 1 & 0 & 0 \\
0 & 0 & 0 & 0 & 0 & 0 & 0 & 1 & -1 & 0 & 0 & 0 & 0 & 0 & 0 \\
0 & 0 & 0 & 0 & 0 & 0 & 0 & 0 & 1 & 0 & 0 & -2 & 0 & 0 & 0 \\
0 & 0 & 0 & 0 & 0 & 0 & 0 & 0 & 0 & 1 & 0 & -2 & -1 & 0 & 0 \\
0 & 0 & 0 & 0 & 0 & 0 & 0 & 0 & 0 & 0 & 1 & 0 & -1 & 0 & 0 \\
0 & 0 & 0 & 0 & 0 & 0 & 0 & 0 & 0 & 0 & 0 & 1 & 0 & 0 & 0 \\
0 & 0 & 0 & 0 & 0 & 0 & 0 & 0 & 0 & 0 & 0 & 0 & 1 & 0 & 0 \\
0 & 0 & 0 & 0 & 0 & 0 & 0 & 0 & 0 & 0 & 0 & 0 & 0 & 1 & 0 \\
0 & 0 & 0 & 0 & 0 & 0 & 0 & 0 & 0 & 0 & 0 & 0 & 0 & 0 & 1
\end{array}\right).
\]
\end{example}

Combinatorially, we can interpret the expression for $\mathsf{Q}^{-1}_{\mu,\lambda}$
in~\eqref{equation.Qinverse} as signed chains (or sequences) of quasi-Yamanouchi tableaux,
as follows.

\begin{definition}
\label{definition.chains}
Define \defn{$\mathsf{Chains}(\mu, \lambda)$} to
be sequences $c = (T^{(1)}, T^{(2)}, \ldots, T^{(k+1)})$ of
quasi-Yamanouchi tableaux satisfying
\begin{enumerate}[topsep=1ex]
\item the shape of $T^{(1)}$ is $\mu$: that is, $T^{(1)} \in \mathsf{QYT}({\mu})$;
\item for $2 \leqslant i \leqslant k+1$, the shape of the $i$-th tableau is the weight of the $(i-1)$st tableau:
    that is, $T^{(i)} \in \mathsf{QYT}(\mathsf{wt}(T^{(i-1)}))$;
\item the shape and weight of the last tableau in the sequence is $\lambda$:
$T^{(k+1)} \in \mathsf{QYT}(\lambda)$ and $\mathsf{wt}(T^{(k+1)}) = \lambda$.
\end{enumerate}
The \defn{sign} associated to a chain $c$ is $(-1)^{\ell(c)-1}$,
where $\ell(c) = k+1$ is the length of the chain, and the \defn{weight $\mathsf{wt}(c)$} of $c$ is the weight of $T^{(k+1)}$.
\end{definition}

Since the number of sequences $(T^{(1)}, \ldots, T^{(k+1)})$ in $\mathsf{Chains}(\mu, \lambda)$
with $\mathsf{wt}(T^{(i)}) = \mu^{i}$ is
\[
    \mathsf{QK}_{\mu,\mu^1} \mathsf{QK}_{\mu^1,\mu^2} \cdots \mathsf{QK}_{\mu^{k-1},\lambda} \mathsf{QK}_{\lambda,\lambda},
\]
and $\mathsf{QK}_{\lambda,\lambda} = 1$,
the properties of Proposition~\ref{proposition.Qinverse} are equivalent to the statement
\[
	\mathsf{Q}^{-1}_{\mu,\lambda} = \sum_{c \in \mathsf{Chains}(\mu,\lambda)} (-1)^{\ell(c)-1}~.
\]

\begin{example}
\label{example.chains}
\allowdisplaybreaks
To illustrate Definition~\ref{definition.chains}, we have the following chains for $\mu=(4,1,1,1)$ and $\lambda=(2,2,2,1)$
\begin{align*}
    c_1 & =
    \left(~
        \begin{ytableau}
            {4}\\
            {3}\\
            {2}\\
            {1}&{1}&{2}&{3}\\
        \end{ytableau}
        , \quad
        \begin{ytableau}
            {4}\\
            {3}&{3}\\
            {2}&{2}\\
            {1}&{1}\\
        \end{ytableau}
    ~\right)
    & \mathsf{sign}(c_1)=-1
    \\
    c_2 & =
    \left(~
        \begin{ytableau}
            {4}\\
            {3}\\
            {2}\\
            {1}&{1}&{1}&{2}\\
        \end{ytableau}
        , \quad
        \begin{ytableau}
            {4}\\
            {3}\\
            {2}&{3}\\
            {1}&{1}&{2}\\
        \end{ytableau}
        , \quad
        \begin{ytableau}
            {4}\\
            {3}&{3}\\
            {2}&{2}\\
            {1}&{1}\\
        \end{ytableau}
    ~\right)
    & \mathsf{sign}(c_2)=1
    \\
    c_3 & =
    \left(~
        \begin{ytableau}
            {4}\\
            {3}\\
            {2}\\
            {1}&{1}&{1}&{2}\\
        \end{ytableau}
        , \quad
        \begin{ytableau}
            {4}\\
            {3}\\
            {2}&{2}\\
            {1}&{1}&{3}\\
        \end{ytableau}
        , \quad
        \begin{ytableau}
            {4}\\
            {3}&{3}\\
            {2}&{2}\\
            {1}&{1}\\
        \end{ytableau}
    ~\right)
    & \mathsf{sign}(c_3)=1
\end{align*}
resulting in $\mathsf{Q}^{-1}_{\mu,\lambda}=1$.
\end{example}

The next result shows that if we want to restrict to partitions whose length is bounded by $m$,
or to $m$ variables as in \eqref{equation.finite variables},
then we need only work with a submatrix of $\mathsf{Q}$.

\begin{corollary}
    \label{corollary-submatrix}
    Let $\mathsf{Q}_{\leqslant m}$ and $(\mathsf{Q}_{\leqslant m})^{-1}$
    denote the submatrix of the quasi-Kostka
    matrix $\mathsf{Q}$ and its inverse, respectively,
    corresponding to the rows and columns indexed by
    partitions of length at most $m$.
    Then
    \begin{equation*}
	    \left(\mathsf{Q}_{\leqslant m}\right)^{-1} = \left(\mathsf{Q}^{-1}\right)_{\leqslant m}.
    \end{equation*}
    In particular, if $\sum_{\lambda \vdash n} b_\lambda s_\lambda = \sum_{\alpha \models n} c_\alpha F_\alpha$
    and $b_\lambda = 0$ for $\ell(\lambda) > m$, then for $\ell(\lambda) \leqslant m$
    \begin{equation}
        \label{equation.b in terms of c in m variables}
        b_\lambda = \sum_{\substack{\mu \vdash n \\ \ell(\mu) \leqslant m}} \left(\mathsf{Q}_{\leqslant m}^{-1}\right)_{\mu,\lambda} c_\mu.
    \end{equation}
\end{corollary}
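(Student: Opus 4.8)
The plan is to exploit the block-triangular structure of $\mathsf{Q}$ when partitions are ordered so that those of length at most $m$ come first, which is compatible with the reverse lexicographic order since refining or lengthening cannot happen in the relevant direction. Concretely, I would first observe that by Proposition~\ref{proposition.prop QK}(2), $\mathsf{QK}_{\lambda,\mu} = 0$ whenever $\lambda < \mu$ in dominance order, and in particular $\mathsf{QK}_{\lambda,\mu} = 0$ whenever $\ell(\lambda) > \ell(\mu)$ (a partition with more parts is dominated by one with fewer parts, so it cannot strictly dominate it). Hence, writing the partitions of $n$ with those of length $\leqslant m$ listed first, the matrix $\mathsf{Q}$ has block form $\left(\begin{smallmatrix} \mathsf{Q}_{\leqslant m} & * \\ 0 & \mathsf{Q}_{> m} \end{smallmatrix}\right)$, where $\mathsf{Q}_{>m}$ is the submatrix on partitions of length exceeding $m$. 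Both diagonal blocks are unit upper-triangular, hence invertible.

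Next I would invert this block-triangular matrix. For a block-upper-triangular matrix $\left(\begin{smallmatrix} A & B \\ 0 & D \end{smallmatrix}\right)$ with $A, D$ invertible, the inverse is $\left(\begin{smallmatrix} A^{-1} & -A^{-1}BD^{-1} \\ 0 & D^{-1} \end{smallmatrix}\right)$; in particular the top-left block of $\mathsf{Q}^{-1}$ is exactly $(\mathsf{Q}_{\leqslant m})^{-1}$. Since the top-left block of $\mathsf{Q}^{-1}$ is, by definition, $(\mathsf{Q}^{-1})_{\leqslant m}$, this gives the identity $(\mathsf{Q}_{\leqslant m})^{-1} = (\mathsf{Q}^{-1})_{\leqslant m}$.

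For the ``in particular'' statement, suppose $f = \sum_{\lambda \vdash n} b_\lambda s_\lambda = \sum_{\alpha \models n} c_\alpha F_\alpha$ with $b_\lambda = 0$ for $\ell(\lambda) > m$. By Theorem~\ref{theorem.main}, $b_\lambda = \sum_{\mu \vdash n} \mathsf{Q}^{-1}_{\mu,\lambda} c_\mu$ for every partition $\lambda$. Fix $\lambda$ with $\ell(\lambda) \leqslant m$. By part (2) of Proposition~\ref{proposition.Qinverse} (equivalently, the upper-triangularity just established in the block-ordered form), $\mathsf{Q}^{-1}_{\mu,\lambda} = 0$ unless $\mu \geqslant \lambda$ in lexicographic order; combined with the length observation $\ell(\mu) \leqslant \ell(\lambda) \leqslant m$ whenever $\mu \geqslant \lambda$ with the relevant dominance relation... more carefully, I would argue directly: $\mathsf{Q}^{-1}_{\mu,\lambda}$ is nonzero only for $\mu$ in the top-left block, i.e. $\ell(\mu) \leqslant m$, because $\mathsf{Q}^{-1}$ is itself block-upper-triangular with a zero block in the lower-left position. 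Therefore the sum over all $\mu \vdash n$ collapses to the sum over $\mu$ with $\ell(\mu) \leqslant m$, and by the matrix identity this equals $\sum_{\ell(\mu) \leqslant m} (\mathsf{Q}_{\leqslant m}^{-1})_{\mu,\lambda} c_\mu$, which is~\eqref{equation.b in terms of c in m variables}.

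The main obstacle is the bookkeeping lemma that $\ell(\lambda) > \ell(\mu)$ forces $\mathsf{QK}_{\lambda,\mu} = 0$, i.e. that this length condition is subsumed by the dominance condition already proven in Proposition~\ref{proposition.prop QK}. This is elementary — a tableau of shape $\lambda$ with more than $\ell(\mu)$ rows cannot be filled with content $\mu$ having only $\ell(\mu)$ distinct entries while remaining column-strict, so $K_{\lambda,\mu}=0$ and a fortiori $\mathsf{QK}_{\lambda,\mu}=0$ — but it is the one place where a small argument rather than a pure citation is needed, and it must be phrased so that the reverse-lex ordering genuinely places all length-$\leqslant m$ partitions in an initial segment of the index set. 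Everything else is the standard fact that the top-left corner of the inverse of a block-upper-triangular matrix is the inverse of the top-left corner.
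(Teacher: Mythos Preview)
Your approach is correct and essentially the same as the paper's: both arguments exploit that the partitions of length $\leqslant m$ form an initial block for a suitable ordering, so that $\mathsf{Q}$ is block upper-triangular and the top-left block of $\mathsf{Q}^{-1}$ equals $(\mathsf{Q}_{\leqslant m})^{-1}$. The paper phrases this as ``partitions of length $\leqslant m$ form an order ideal for reverse dominance order'' and chooses the linear extension ``sort by length, then reverse lex within each length,'' but the content is identical to your block argument.

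One small correction: your parenthetical ``a partition with more parts is dominated by one with fewer parts'' is false as stated (e.g.\ $(3,1,1,1)$ and $(2,2,2)$ are incomparable in dominance), so Proposition~\ref{proposition.prop QK}(2) alone does not immediately yield $\mathsf{QK}_{\lambda,\mu}=0$ when $\ell(\lambda)>\ell(\mu)$. What is true is the contrapositive direction you need: if $\lambda \geqslant \mu$ in dominance then $\ell(\lambda)\leqslant\ell(\mu)$, equivalently $\ell(\lambda)>\ell(\mu)$ implies $\lambda\not\geqslant\mu$ and hence $K_{\lambda,\mu}=0$. You do supply the clean direct argument (the first column of $\lambda$ needs $\ell(\lambda)$ distinct entries) in your final paragraph, so the proof stands; just drop the faulty parenthetical and lead with that argument.
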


\begin{proof}
    This follows from the fact that the set of partitions of $n$ with length at
    most $m$ is an order ideal for reverse dominance order.
    Order the partitions by length (shortest to longest) and by reverse
    lexicographic order for partitions of the same length.
    This is a linear extension of reverse dominance order.
    If we order the rows and columns of $\mathsf{Q}$ according to this linear
    extension, we get an upper triangular matrix whose top left block is
    $\mathsf{Q}_{\leqslant m}$. Hence, the inverse of $\mathsf{Q}_{\leqslant
    m}$ is the top left block of the inverse of $\mathsf{Q}$.
\end{proof}

Let us reframe this result in a way similar to the Garsia--Remmel~\cite{GR.2018} formulation of the results of Egge, Loehr and
Warrington~\cite{ELW.2010}. To obtain the Schur expansion of a symmetric function $f = \sum_{\alpha \models n} c_\alpha F_\alpha$:
\begin{enumerate}
\item replace $F_\alpha = 0$ for $\alpha$ not a partition;
\item for each partition $\mu$ replace $F_\mu$ by $\sum_c (-1)^{\ell(c)-1} s_{\mathsf{wt}(c)}$ summed over all chains $c$ as in
Definition~\ref{definition.chains} such that the initial shape is $\mu$.
\end{enumerate}

\begin{example}
\def\cc{.7}
Suppose
\[
	f= F_{(1, 2, 2)} + F_{(1, 3, 1)} + F_{(1, 4)} + F_{(2, 2, 1)} + 2 F_{(2, 3)} + 2 F_{(3, 2)} + F_{(4, 1)}.
\]
We can check that $f$ is a symmetric function
by computing the monomial expansion and checking that the coefficient of
$M_\alpha$ is equal to the coefficient of $M_\beta$ when $\beta$
is a permutation of the parts of $\alpha$.
To obtain the Schur expansion, the first step
is to replace $F_\alpha=0$ for all $\alpha$ which are not partitions to obtain:
\[
	F_{(2, 2, 1)} + 2 F_{(3, 2)} + F_{(4, 1)}.
\]
\begin{itemize}
\item
There is only one chain starting with shape $(2,2,1)$, namely
$\left(~\begin{ytableau}
{3}\\
{2}&{2}\\
{1}&{1}\\
\end{ytableau}~\right)$.

\item
There are two chains starting with shape $(3,2)$:
\begin{equation*}
    \left(~
        \begin{ytableau}
            {2}&{2}\\
            {1}&{1}&{1}\\
        \end{ytableau}
    ~\right)
    \qquad \text{and} \qquad
    \left(~
        \begin{ytableau}
            {2}&{3}\\
            {1}&{1}&{2}\\
        \end{ytableau}
        ~, \quad
        \begin{ytableau}
            {3}\\
            {2}&{2}\\
            {1}&{1}\\
        \end{ytableau}
    ~\right).
\end{equation*}

\item
Three chains starting with shape $(4,1)$:
\begin{equation*}
    \left(~
        \begin{ytableau}
            {2}\\
            {1}&{1}&{1}&{2}\\
        \end{ytableau}
        ~, \quad
        \begin{ytableau}
            {2}&{3}\\
            {1}&{1}&{2}\\
        \end{ytableau}
        ~, \quad
        \begin{ytableau}
            {3}\\
            {2}&{2}\\
            {1}&{1}\\
        \end{ytableau}
    ~\right)
    , \quad
    \left(~
        \begin{ytableau}
            {2}\\
            {1}&{1}&{1}&{2}\\
        \end{ytableau}
        ~, \quad
        \begin{ytableau}
            {2}&{2}\\
            {1}&{1}&{1}\\
        \end{ytableau}
    ~\right)
    \quad \text{and} \quad
    \left(~
        \begin{ytableau}
            {2}\\
            {1}&{1}&{1}&{1}\\
        \end{ytableau}
    ~\right).
\end{equation*}
\end{itemize}
Hence the Schur expansion of $f$ is
\[
	f = s_{(2,2,1)} + 2(-s_{(2,2,1)}+s_{(3,2)}) + (s_{(2,2,1)} - s_{(3,2)} + s_{(4,1)}) = s_{(3,2)} + s_{(4,1)}.
\]
\end{example}

\begin{remark}
\label{remark.permutation}
Instead of picking the square matrix $\mathsf{Q}= (\mathsf{QK}_{\lambda,\mu})_{\lambda,\mu}$ for partitions $\lambda,\mu \vdash n$, one can pick a different
set $\mathcal{S} = \{ \sigma_\mu(\mu) \mid \mu \vdash n\}$, where $\sigma_\mu \in S_{\ell(\mu)}$ is a permutation. The square matrix
$(\mathsf{QK}_{\lambda,\alpha})_{\lambda,\alpha}$ with $\lambda \vdash n$ and $\alpha \in \mathcal{S}$ is still invertible and often sparser than $\mathsf{Q}$.
The invertibility follows from the fact that $\mathsf{QK}_{\lambda,\sigma(\lambda)}=1$, which follows from $K_{\lambda, \sigma(\lambda)}=1$ and observing
that this unique tableau is quasi-Yamanouchi (since the entries in the first column contain $1,2,\ldots,\ell(\lambda)$).
All arguments in this section still go through in this setting.
\end{remark}

Following the notation from Corollary~\ref{cor.b=c} and
Theorem~\ref{theorem.main}, we note that
Egge, Loehr and Warrington~\cite{ELW.2010} give the entries
of a matrix $K^\ast_{\alpha, \lambda}$ such that
\begin{equation}
\label{equation.b in terms of c.ELW}
	b_\lambda = \sum_{\alpha \models n} K^\ast_{\alpha, \lambda} c_\alpha~.
\end{equation}
It is worth comparing this result with Equation~\eqref{equation.b in terms of c}.
Their matrix $K^\ast = (K^\ast_{\alpha,\lambda})_{\alpha \models n, \lambda \vdash n}$ is rectangular compared to
our matrix $\mathsf{Q}^{-1} = (\mathsf{Q}^{-1}_{\mu,\lambda})_{\mu,\lambda \vdash n}$, which is square.
Equation~\eqref{equation.b in terms of c} can be interpreted as
a different right inverse of this matrix with the same dimensions as $K^\ast$
where the rows that are indexed by partitions $\mu \vdash n$
are $\mathsf{Q}^{-1}_{\mu,\lambda}$ and the rows indexed by
compositions that are not partitions contain only zero entries.

The matrix $K^\ast$ is sparse since for a fixed
$\alpha$ there is at most one partition $\lambda$ such that $K^\ast_{\alpha,\lambda} \neq 0$
and all the non-zero entries are $\pm 1$.
By comparison, we have that the coefficients $\mathsf{Q}^{-1}_{\mu,\lambda}$
are often in $\{0,1,-1\}$ but may lie out of this set.  We note, however,
that it is easy to deduce from Equation \eqref{equation.Qinverse} that
$\mathsf{Q}^{-1}_{\mu,(n-k,1^k)} = 0$ unless $\mu = (n-k,1^k)$ is also a hook.

\section{Applications}
\label{section.applications}

In this section, we consider several applications of the methods developed in this paper.

\subsection{Leading terms in plethysm of Schur functions and Newton polytopes}
\label{section.leading term}

Finding a combinatorial interpretation for the plethysm coefficients $a_{\lambda,\mu}^\nu$ for the Schur functions
\begin{equation}
\label{equation.plethysm}
	s_\lambda[s_\mu] = \sum_\nu a_{\lambda,\mu}^\nu s_\nu
\end{equation}
is in general an open problem (see for example~\cite{COSSZ.2022}).

Recently, Panova and Zhao~\cite{PZ.2023} asked whether for general partitions $\lambda$ and $\mu$, there exists a partition $\nu$ such that
$a_{\lambda,\mu}^\nu=1$ and $\nu$ is largest in lexicographic order among all partitions $\kappa$ with $a_{\lambda,\mu}^\kappa>0$.
They conjectured that
\begin{equation}
\label{equation.nu}
	\nu = \lambda_1 \mu^{(1)}+\lambda_2 \mu^{(2)} + \cdots + \lambda_\ell \mu^{(\ell)},
\end{equation}
where $\ell$ is the length of $\lambda$ and $\mu^{(1)},\ldots,\mu^{(\ell)}$ are the first $\ell$ terms in reverse lexicographic order in the Newton
polytope of $s_\nu$. More precisely, $\mu^{(1)}=\mu$ and $\mu^{(i)} = (\mu_1,\mu_2,\ldots,\mu_{k-1},\mu_k-1,0^{i-2},1)$ for $i>1$ with $k$ the length
of $\mu$.
Panova and Zhao~\cite{PZ.2023} point out that if $\nu$ is also largest in
dominance order this would imply that the plethysm of two Schur functions
$s_\lambda[s_\mu]$ has a saturated Newton polytope (see~\cite{MPY.2022}).
Paget and Wildon \cite{PW.2019} characterize that the Schur expansion
of $s_\lambda[s_\mu]$ has a unique maximal term in dominance order if and only
if $\ell(\lambda)=1$ or $\mu=(1)$ or $\ell(\lambda)=2$ and $\mu$ is a rectangle.
When there is no unique term in dominance order, it is still open whether 
$s_\lambda[s_\mu]$ has a saturated Newton polytope.

We use the methods in this paper and the
fundamental expansion of $s_\lambda[s_\mu]$ due to
Loehr and Warrington~\cite{LW.2012} to prove this result.
Loehr and Warrington's combinatorial formula for this expression states that
\begin{equation}
\label{equation.plethysm F expansion}
	s_\lambda[s_\mu] = \sum_{T \in \mathsf{SToT}(\lambda,\mu)} F_{\mathsf{iDes}(T)},
\end{equation}
where the sum runs over all standard tableaux of tableaux of outer shape $\lambda$ and inner shape $\mu$. In a tableau of tableaux the entries in
the boxes of $\lambda$ are themselves tableaux of shape $\mu$ ordered in lexicographic order by row reading word.
For each $T$ which is a standard tableau of tableaux, $\mathsf{iDes}(T)$ is
the inverse descent composition of the \defn{dynamic reading word} of $T$.

Let us explain how to obtain the dynamic reading word from $T \in \mathsf{SToT}(\lambda,\mu)$. First we construct a $|\lambda|\times |\mu|$-dimensional
matrix $A$ by associating a row in $A$ to each box in $\lambda$, reading top to bottom, left to right. The row in $A$ associated to a cell $C$ in $\lambda$
is obtained by the row reading word from the tableau of shape $\mu$ inside cell $C$ (again top to bottom, left to right).

The dynamic reading word is a permutation $w$ and $\mathsf{iDes}(T)$ is the composition $\alpha$ associated with the inverse descent set
$\{ i \mid w^{-1}(i) > w^{-1}(i+1)\}$.

\begin{example}
\label{example.tot}
Consider the tableau of tableaux
\[
T = \scalebox{.8}{\tableau[l]{\tableau[sY]{a&b\\1&4} \\ \tableau[sY]{5&7\\ 2&3} & \tableau[sY]{8&c\\ 6&9}}\;}
 \in \mathsf{SToT}((2,1),(2,2)),
\]
where we use the alphabet $\{1<2<\cdots<9<a<b<c\}$. The matrix $A$ is given by
\[
	A = \begin{bmatrix}
	a&b&1&4\\
	5&7&2&3\\
	8&c&6&9
	\end{bmatrix}.
\]
\end{example}

Next scan the columns of $A$ from left to right. For each column index $k < |\mu|$, use the symbols in column $k+1$ of $A$ to determine a total ordering
of the rows in which the row with the smallest symbol in column $k+1$ comes first, and the row with the largest symbol in column $k+1$ comes last.
Write the symbols in column $k$ using the row ordering just determined. For the rightmost column, write the symbols in order from top to bottom.
The resulting word is $\mathsf{dynamic}(T)$ (see~\cite{LW.2012}).

\begin{example}
For $T$ in Example~\ref{example.tot}, we obtain $\mathsf{dynamic}(T) = 5a8b7c216439$ and $\mathsf{iDes}(T)=(1,2,1,2,1,2,3)$.
\end{example}

By Corollary~\ref{cor.b=c}, the coefficient of the lexicographic largest term $F_\nu$ in~\eqref{equation.plethysm F expansion} is the same as
the coefficient of $s_\nu$ in the Schur expansion of $s_\lambda[s_\mu]$. Let $n=|\lambda|$. We obtain $\nu$ by filling the last $\mu_1$ columns
of $A$ from top to bottom, left to right with the numbers $1,2,\ldots, n\mu_1$, the preceding $n\mu_2$ columns with the numbers
$n\mu_1+1, n\mu_1+2,\ldots,n(\mu_1+\mu_2)$ and so on, until we get to the last part $\mu_k$ of $\mu$.
If $\mu_k > 1$, fill columns 2 to $\mu_k$ with
$n(\mu_1+\cdots+\mu_{k-1}+1)+1,n(\mu_1+\cdots+\mu_{k-1}+1)+2,\ldots,n(\mu_1+\cdots+\mu_k)$.
Note that the inverse descent composition associated with this (partially-filled) matrix is
$(n\mu_1,\ldots,n\mu_{k-1},n\mu_k-n)$, which is the lexicographically largest that can be achieved
and can only be achieved in this way.

The first column is filled with the remaining $n$ numbers $n(\mu_1+\cdots+\mu_{k-1})+1,n(\mu_1+\cdots+\mu_{k-1})+2,\ldots,n(\mu_1+\cdots+\mu_{k-1})+n$ as follows. Note that they cannot be
filled in order from top to bottom since then the corresponding tableau of tableaux would not be column-strict
(unless $\lambda$ is a single row). To obtain the largest increasing string of numbers with the previously $n\mu_k-n$ placed numbers
$n(\mu_1+\cdots+\mu_{k-1}+1)+1,n(\mu_1+\cdots+\mu_{k-1}+1)+2,\ldots,n(\mu_1+\cdots+\mu_k)$, we can adjoin at most $\lambda_1$
numbers due to the shape of $\lambda$. This is achieved by the reading word of the standardization of the
unique tableau of shape $\lambda$ and weight $(\lambda_2,\ldots, \lambda_\ell,\lambda_1)$ with all letters shifted by $n(\mu_1+\cdots+\mu_{k-1})$.
Note that this tableau is unique since $K_{\lambda,\sigma{\lambda}}=1$ for any permutation $\sigma \in S_{\ell(\lambda)}$.

In this construction, all columns of $A$ except for the first column are increasing from top to bottom. Hence the dynamic reading word is just the column
reading word of $A$. The inverse descent composition
is $\tilde{\nu}=(n\mu_1,\ldots,n\mu_{k-1},\lambda_2,\ldots,\lambda_\ell,n\mu_k-n+\lambda_1)$ and $A$
with this inverse descent composition is unique. Hence $F_{\tilde{\nu}}$ appears in
$s_\lambda[s_\mu]$ with coefficient 1.
Note that $\tilde{\nu}$ is generally a composition and not a partition. However, from Remark~\ref{remark.permutation}, we also have that
$F_\nu$ with
\begin{equation}
\label{equation.nu new}
	\nu = (n\mu_1,\ldots,n\mu_{k-1},n\mu_k-n+\lambda_1,\lambda_2,\ldots,\lambda_\ell)
\end{equation}
appears in $s_\lambda[s_\mu]$ with coefficient 1.
This gives an alternative proof of~\cite[Corollary 9.1]{PW.2019}
and~\cite[Theorem 4.2]{I.2011}, which we state in the following proposition.

\begin{proposition} \label{th:leadingcoefficient}
For partitions $\lambda \vdash n$ and $\mu$,
the partition $\nu$ that is largest in lexicographic order such that $a^\nu_{\lambda,\mu}\neq 0$ in~\eqref{equation.plethysm} is as in~\eqref{equation.nu new}.
Moreover, $a^\nu_{\lambda,\mu}=1$.
\end{proposition}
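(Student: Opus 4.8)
The plan is to combine two ingredients already developed in the excerpt: the combinatorial description of the lexicographically largest term via Loehr--Warrington's fundamental expansion (together with Corollary~\ref{cor.b=c}), and the freedom, granted by Remark~\ref{remark.permutation}, to rearrange the parts of a composition appearing in the support into partition form without affecting the Schur coefficient. Concretely, recall that by Corollary~\ref{cor.b=c}, if $\tilde\nu$ is the lexicographically largest composition in $\mathsf{supp}(s_\lambda[s_\mu])$ then $\tilde\nu$ is necessarily a partition and $a^{\tilde\nu}_{\lambda,\mu}$ equals the coefficient $c_{\tilde\nu}$ of $F_{\tilde\nu}$ in the expansion~\eqref{equation.plethysm F expansion}. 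So the proposition reduces to two assertions: first, that the partition $\nu$ in~\eqref{equation.nu new} is the lexicographically largest partition $\kappa$ with $a^\kappa_{\lambda,\mu}\ne 0$; second, that $a^\nu_{\lambda,\mu}=1$.

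First I would make precise the construction of the matrix $A$ that realizes the extremal term, following the paragraphs preceding the proposition. The key combinatorial point is to analyze the inverse descent composition $\mathsf{iDes}(T)$ of a tableau of tableaux $T \in \mathsf{SToT}(\lambda,\mu)$ in terms of the $n\mu_1 \times \cdots$ block structure of the columns of $A$. I would argue that to make $\mathsf{iDes}(T)$ lexicographically as large as possible, the rightmost $\mu_1$ columns of $A$ must be filled, top-to-bottom then left-to-right, with $1,\dots,n\mu_1$ (since any inversion among these entries would lower an early part of $\mathsf{iDes}$), then the next $n\mu_2$ columns with the next block of integers, and so on; this forces the prefix $(n\mu_1,\dots,n\mu_{k-1})$ of the composition and shows it is uniquely achieved. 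For the first column, column-strictness of the inner $\mu$-tableaux forbids filling it increasingly top-to-bottom (unless $\lambda$ is a single row), and the shape of $\lambda$ limits how many of the first-column entries can continue the increasing run begun by the entries of the second column; the optimal choice is exactly the standardized reading word of the unique tableau of shape $\lambda$ and content $(\lambda_2,\dots,\lambda_\ell,\lambda_1)$, which is unique because $K_{\lambda,\sigma\lambda}=1$. This yields the composition $\tilde\nu=(n\mu_1,\dots,n\mu_{k-1},\lambda_2,\dots,\lambda_\ell,n\mu_k-n+\lambda_1)$, uniquely realized, hence $c_{\tilde\nu}=1$.

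Next I would invoke Remark~\ref{remark.permutation}: taking the permutation set $\mathcal S$ that reorders the parts of $\tilde\nu$ into the partition $\nu$ of~\eqref{equation.nu new} (moving the part $n\mu_k-n+\lambda_1$ before $\lambda_2,\dots,\lambda_\ell$), the square matrix $(\mathsf{QK}_{\kappa,\alpha})_{\kappa\vdash n,\,\alpha\in\mathcal S}$ is still unit upper-triangular with respect to the appropriate order, so the whole ``quasisymmetric to Schur'' machinery of Section~\ref{section.main} applies verbatim with $\tilde\nu$ replaced by $\nu$. In particular the coefficient of $s_\nu$ in $s_\lambda[s_\mu]$ equals the coefficient of $F_\nu$, which by the uniqueness just established is $1$; and since $\nu$ is, by construction, the partition rearrangement of the lexicographically maximal composition in the support, and lexicographic order on compositions restricts to lexicographic order on the partitions appearing (with any non-partition composition being dominated, in the relevant sense, by its partition rearrangement through the triangularity), $\nu$ is the lex-largest partition with nonzero plethysm coefficient. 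This gives both claims of the proposition.

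I expect the main obstacle to be the precise bookkeeping in the second paragraph: showing rigorously that the greedy/blockwise filling of $A$ is forced, i.e. that no other standard tableau of tableaux can produce a lexicographically larger $\mathsf{iDes}$, and in particular correctly tracking how the column-strictness constraint on the first column interacts with the increasing run from column two and the shape constraint from $\lambda$. This requires a careful case analysis of how entries in adjacent columns of $A$ contribute descents to the \emph{dynamic} reading word (which is not simply the column reading word in general), and verifying that the exchange argument — swapping any deviating entry back into block form weakly increases the composition in lex order — is valid at each step. The appeal to Corollary~\ref{cor.b=c} and Remark~\ref{remark.permutation} is then routine, as is the final comparison of lex orders, so the entire weight of the proof rests on this extremal analysis of the Loehr--Warrington formula.
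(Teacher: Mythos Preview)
Your proposal is essentially the paper's own argument: you invoke Corollary~\ref{cor.b=c}, reproduce the greedy column-by-column filling of the Loehr--Warrington matrix $A$ to obtain the composition $\tilde\nu$ with $c_{\tilde\nu}=1$, and then appeal to Remark~\ref{remark.permutation} to pass from $\tilde\nu$ to its partition rearrangement $\nu$; you also correctly flag the extremal analysis of the dynamic reading word as the place where the real work lies. One small inaccuracy worth fixing: $\tilde\nu$ is \emph{not} the lexicographically maximal composition in the support (by Corollary~\ref{cor.b=c} that maximum must be a partition, namely $\nu$ itself), so the last step is better phrased as using the triangularity from Remark~\ref{remark.permutation} to deduce $a^{\nu}_{\lambda,\mu}=c_{\tilde\nu}=1$ once one knows no $\kappa$ with $\kappa>\nu$ in dominance can occur---which is exactly what the greedy analysis supplies.
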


The partition $\nu$ in~\eqref{equation.nu new} agrees with~\eqref{equation.nu} after some rewriting.

\begin{example}
For $\lambda=(2,1)$ and $\mu=(2,2)$, this construction gives the tableau of tableaux
\[
T = \scalebox{.8}{\tableau[l]{\tableau[sY]{8&a\\1&4} \\ \tableau[sY]{7&b\\ 2&5} & \tableau[sY]{9&c\\ 3&6}}\;}
\qquad \text{with matrix} \qquad
	A = \begin{bmatrix}
	8&a&1&4\\
	7&b&2&5\\
	9&c&3&6
	\end{bmatrix}
\]
and $\mathsf{dynamic}(T) = 879abc123456$ and $\mathsf{iDes}(T)=(6,1,5)$. Indeed, we have
$s_{(2,1)}[s_{(2,2)}] = s_{(6,5,1)}+$ terms strictly smaller in lexicographic order.
\end{example}

A similar approach can be used to determine the second largest
partition $\kappa$ in lexicographic order such that $a_{\lambda, \mu}^\kappa >0$.
We use the fact observed in this paper that only the coefficients of
$F_\alpha$ with $\alpha$ a partition are needed. For example,
for $\mu_k>1$ with $k=\ell(\mu)$ and $\ell>2$,
we deduce that this second largest partition is
\[
	\kappa = (n\mu_1,\ldots,n\mu_{k-1},n\mu_k-n+\lambda_1-1,\lambda_2+2,\lambda_3,\ldots,\lambda_{\ell-1},\lambda_\ell-1)
\]
and that $a_{\lambda,\mu}^\kappa = 1$. Since $\kappa$ is incomparable to $\nu$ in~\eqref{equation.nu new} in dominance order, there is no
maximal term in dominance order in $s_\lambda[s_\mu]$ in this case. This agrees with the classification in~\cite{PW.2019}.

\subsection{Plethysm of Schur functions in two variables}
\label{section.two variables}

By~\eqref{equation.finite variables}, we can restrict to $m$ variables. Doing so for the plethysm $s_w[s_h](x,y)$
reduces~\eqref{equation.plethysm F expansion} to
\begin{equation}
\label{equation.wh}
	s_w[s_h](x,y) = \sum_{T \in \mathsf{SToT}((w),(h))} F_{\mathsf{iDes}(T)}(x,y).
\end{equation}
To convert this into a Schur expansion, by Theorem~\ref{theorem.main} it suffices to restrict to summing over $T$ such that $\mathsf{iDes}(T)$ is a
partition with at most two parts
\[
	S_{w,h} = \sum_{\substack{T \in \mathsf{SToT}((w),(h))\\ \mathsf{iDes}(T)=(\lambda_1,\lambda_2) \text{ partition of $wh$}}}
	F_{(\lambda_1,\lambda_2)}(x,y).
\]
By the results of Loehr and Warrington~\cite{LW.2012} described in Section~\ref{section.leading term}, $\mathsf{iDes}(T)$ has at most two parts, if $T$
is the standardization of a tableau of tableaux in the alphabet $\{1,2\}$.

\begin{example}
For $h=4$ and $w=5$, the matrix $M$ below comes from a tableau of tableaux in the alphabet $\{1,2\}$.
\[
	M = \begin{bmatrix}
    1&1&1&1\\\cline{4-4}
    1&1&1&\multicolumn{1}{|c}{2}\\
    1&1&1&\multicolumn{1}{|c}{2}\\\cline{2-3}
    1&\multicolumn{1}{|c}{2}&2&2\\\cline{1-1}
	2&2&2&2
	\end{bmatrix}
	\qquad \text{and standardizes to} \qquad
	A=\begin{bmatrix}
	1&5&8&11\\
	2&6&9&17\\
	3&7&10&18\\
	4&13&15&19\\
	12&14&16&20
	\end{bmatrix}.
\]
The inverse descent composition of the tableau of tableaux $T$ corresponding to $A$ is $\mathsf{iDes}(T) = (\lambda_1,\lambda_2)=(11,9)$.
Note that the border between the 1's and 2's in $M$ defines a partition $\mu$ of length at most $w$  and $\mu_1\leqslant h$, where $\mu_i$
is the number of 2's in the $i$-th column, from the right. In this example $\mu=(4,2,2,1)$.
\end{example}

Let $L(w, h)$ be the set of partitions $\mu$ in a $w \times h$ box, that is, $\mu_1 \leqslant w$ and $\ell(\mu) \leqslant h$.
Note that the dynamic reading word of the matrix $A$ corresponding to the partition $\mu$ with $\mu=(w^k,a)$ is $123\ldots (wh)$ for all
$0\leqslant k\leqslant h$ and $0\leqslant a <w$.
For example, for $\mu = (4,4,2)$,
\[
	\begin{bmatrix}
    \cline{3-4}
    1&1&\multicolumn{1}{|c}{2}&2\\
    1&1&\multicolumn{1}{|c}{2}&2\\\cline{2-2}
    1&\multicolumn{1}{|c}{2}&2&2\\
    1&\multicolumn{1}{|c}{2}&2&2\\
    \cline{1-1}
	\end{bmatrix}
	\quad \text{standardizes to} \quad
	\begin{bmatrix}
    1 & 5 & 9 & 13 \\
    2 & 6 & 10 & 14 \\
    3 & 7 & 11 & 15 \\
    4 & 8 & 12 & 16
	\end{bmatrix}.
\]
The dynamic reading words for all other partitions
are distinct and their inverse descent composition is $(\lambda_1,\lambda_2)$ with $\lambda_1$ the number of 1's in $M$ and $\lambda_2$
the number of 2's in $M$. Hence
\begin{equation}
\label{equation.Swh}
	S_{w,h} = F_{(wh)} + \sum_{\substack{\lambda=(\lambda_1,\lambda_2) \vdash wh\\ \lambda_2>0}} c_\lambda F_\lambda,
\end{equation}
where
\begin{equation}\label{equation.clambda}
    c_\lambda = \# \{ \mu \vdash \lambda_2 \mid \mu \in L(w, h) \text{~and~} \mu \text{ not of the form $(w^k,a)$}\}.
\end{equation}

Since we are working with two variables,
by Corollary~\ref{corollary-submatrix} it suffices to consider the submatrix
$\mathsf{Q}_{\leqslant 2}$ of the quasi-Kostka matrix indexed by partitions of
$wh$ with at most two parts. The submatrix indexed by the partitions with
exactly two parts is the unit upper-triangular matrix with all nonzero entries
equal to one. Its inverse has $1$ on the diagonal, $-1$ in row $i$ and column
$i+1$, and 0 elsewhere.
\begin{example}
For $wh=8$ and ordering the partitions of $8$ with at most two parts in reverse lexicographic order $\{(8),(7,1),(6,2),(5,3),(4,4)\}$, we have
\[
	\mathsf{Q}_{\leqslant 2} = \begin{pmatrix}
	1&0&0&0&0\\
	0&1&1&1&1\\
	0&0&1&1&1\\
	0&0&0&1&1\\
	0&0&0&0&1
	\end{pmatrix}
	\quad \text{and} \quad
	\mathsf{Q}_{\leqslant 2}^{-1} = \begin{pmatrix}
	1&0&0&0&0\\
	0&1&-1&0&0\\
	0&0&1&-1&0\\
	0&0&0&1&-1\\
	0&0&0&0&1
	\end{pmatrix}.
\]
\end{example}

Hence by Theorem~\ref{theorem.main}, we obtain the Schur expansion
\begin{equation}
\label{equation.b lambda}
	s_w[s_h](x,y) = \sum_{\lambda=(\lambda_1,\lambda_2) \vdash wh} a^\lambda_{(w),(h)} s_\lambda(x,y),
\end{equation}
where $a^{(wh)}_{(w),(h)}=1$ and for $\lambda = (\lambda_1, \lambda_2)$ with $\lambda_2>0$,
\begin{equation}
\label{equation.b 2 variable}
	a^\lambda_{(w),(h)} = c_{(\lambda_1,\lambda_2)} - c_{(\lambda_1+1,\lambda_2-1)}.
\end{equation}

\subsubsection{The case $w=2$}
For $w=2$, Equation~\eqref{equation.Swh} simplifies to
\[
	S_{2,h} = F_{(2h)} + \sum_{m\geqslant 2} \left\lfloor \frac{m}{2} \right\rfloor F_{(2h-m,m)}.
\]
Using~\eqref{equation.b 2 variable}, we rederive the well-known formula
\begin{equation}
	s_2[s_h] = \sum_{\substack{0\leqslant c \leqslant h\\ \text{$c$ even}}} s_{(2h-c,c)}.
\end{equation}
This analysis is much easier than the one presented in~\cite{LW.2012}.

\subsubsection{Symmetric chain decompositions}
For general $w$, a combinatorial description for $a^\lambda_{(w),(h)}$ can be found by determining a \defn{symmetric chain decomposition} (SCD)
in the subposet $L(w,h)$ of Young's lattice restricted to partitions in a box of height $h$ and width $w$. Concretely, for
$\lambda,\mu \in L(w,h)$ we have $\lambda \leqslant \mu$ if and only if $\lambda_i \leqslant \mu_i$ for all $1\leqslant i \leqslant h$. In fact, $L(w, h)$ is a
graded lattice, with \defn{rank function}
\begin{equation*}
    \rank(\lambda) = \lambda_1 + \cdots + \lambda_h.
\end{equation*}
A symmetric chain decomposition is a partitioning of $L(w,h)$ into saturated, rank-symmetric chains.
In this setting, the formula for $a^\lambda_{(w),(h)}$ in~\eqref{equation.b 2 variable} can be restated as
\begin{equation}
\label{equation.b L}
	a^\lambda_{(w),(h)} = \#\{ \mu \in L(w,h) \mid \rank(\mu)=\lambda_2\} - \# \{ \mu \in L(w,h) \mid \rank(\mu) = \lambda_2-1\},
\end{equation}
since for any $\lambda_2$ there is precisely one partition of $\lambda_2$ of the form $(w^k,a)$
and so by \eqref{equation.clambda} we have $\#\{ \mu \in L(w,h) \mid \rank(\mu)=\lambda_2\} = c_\lambda + 1$.

In the language of symmetric chain decompositions, the saturated chains provide an injection from the elements of rank $\lambda_2-1$ to those of
rank $\lambda_2$. Therefore $a^\lambda_{(w),(h)}$ is the number of symmetric chains that begin at rank $\lambda_2$.

Stanley~\cite{Stanley1980} proved that $L(w,h)$ is rank-symmetric, rank-unimodal, and exhibits the Sperner property.
Furthermore, it was conjectured in~\cite{Stanley1980} that $L(w, h)$ admits a symmetric chain decomposition.
O'Hara~\cite{OHara1990} proved that a related poset structure
$\tilde{L}(w, h)$ admits a symmetric chain decomposition, where $\tilde{L}(w,h)$
is the partial order on the partitions in a box of height $h$ and width $w$ defined by
$\lambda \leqslant_H \mu$ if and only if $\rank(\lambda) \leqslant \rank(\mu)$.
Note that $\tilde{L}(w,h)$ has many more covering relations than $L(w,h)$. O'Hara's method is recursive and does not
yield explicit descriptions for the minimal elements in each chain.
Lindstrom~\cite{Lindstrom1980} gave a symmetric chain decomposition for $L(3, h)$.
West~\cite{West1980} gave a symmetric chain decomposition for $L(4, h)$.
Ries~\cite{Riess1978} and Wen~\cite{Wen2004} gave symmetric chain decompositions for $L(3, h)$ and $L(4, h)$.
Recently, Wen~\cite{Wen.2024} gave a computer assisted proof of a symmetric chain decomposition of $L(5,h)$.

Here we give new symmetric chain decompositions for $w=3,4$.
One advantage of these decompositions is that we can explicitly describe the minimal elements in each chain, which gives a combinatorial
expression for $a^\lambda_{(w),(h)}$.
In addition, these symmetric chain decompositions satisfy the following three conditions.

\smallskip

\noindent \textbf{The restriction condition.}
We are interested in symmetric chain decompositions of $L(w, h)$ that
``restrict'' to symmetric chain decompositions of $L(w, h-1)$ in the following
sense. Denote a saturated chain in $L(w, h)$ by
\begin{equation*}
    \lambda^{(0)} \lessdot \lambda^{(1)} \lessdot \cdots
    \lessdot \lambda^{(c-1)}
    \lessdot \lambda^{(c)}.
\end{equation*}
The \defn{restriction} of this chain to $L(w, h-1)$ is the chain
obtained by removing the elements that do not belong to $L(w, h-1)$.
Thus, there exists a $d$ between $0$ and $c$ such that
\begin{equation*}
    \underbrace{
        \lambda^{(0)} \lessdot \lambda^{(1)} \lessdot \cdots
        \lessdot \lambda^{(d)}
    }_{\in L(w, h-1)}
    ~\lessdot
    \underbrace{ \lambda^{(d+1)} }_{\notin L(w, h-1)}
    \lessdot
    \cdots
    \lessdot
    \underbrace{ \lambda^{(c-1)} }_{\notin L(w, h-1)}
    \lessdot
    \underbrace{ \lambda^{(c)} }_{\notin L(w, h-1)}
\end{equation*}
so that the restriction of the chain is
$\lambda^{(0)} \lessdot \lambda^{(1)} \lessdot \cdots \lessdot \lambda^{(d)}$.

We say that a SCD $\mathscr C$ of $L(w, h)$ \defn{restricts to a SCD of $L(w,
h-1)$} if the set of chains obtained by restricting all the chains in $\mathscr
C$ to $L(w, h-1)$ forms a SCD of $L(w, h-1)$.

\smallskip

\noindent \textbf{The extension condition.}
We say that a SCD $\mathscr C$ of $L(w, h)$ satisfies the \defn{extension
condition} if, for every chain of $C \in \mathscr C$,
the maximal element of $C$ and the maximal element of the restriction of $C$ to $L(w, h-1)$
have the same complement. The \defn{complement} of $\lambda$ in $L(w, h)$ is
\begin{equation*}
     (w - \lambda_{h}, w - \lambda_{h-1}, \ldots, w - \lambda_{1}).
\end{equation*}
For example, the complement of $\lambda = (2, 1)$ in $L(4,3)$ is $(4,3,2)$.
Explicitly, if the restriction of
$\lambda^{(0)} \lessdot \cdots \lessdot \lambda^{(c)}$
to $L(w, h-1)$ is
$\lambda^{(0)} \lessdot \cdots \lessdot \lambda^{(d)}$,
then $(w^h) - \lambda^{(c)} = (w^{h-1}) - \lambda^{(d)}$ viewed as vectors.

\smallskip

\noindent \textbf{The pattern condition.}
Consider a cover relation $\lambda^{(i-1)} \lessdot \lambda^{(i)}$ in $L(w,h)$.
Viewing the elements of $L(w,h)$ as vectors, note that the difference
$(\lambda^{(i)})^t - (\lambda^{(i-1)})^t$ is equal to $\vec e_j
= (0,\ldots,0,1,0,\ldots,0)$ for some $1\leqslant j \leqslant w$, where the unique $1$ appears in
position $j$. The \defn{edge labelling} of
\begin{equation*}
    \lambda^{(0)} \lessdot \lambda^{(1)} \lessdot \cdots
    \lessdot \lambda^{(\ell-1)}
    \lessdot \lambda^{(\ell)}
\end{equation*}
is the sequence $(c_1, \ldots, c_\ell)$ defined by the condition $(\lambda^{(i)})^t- (\lambda^{(i-1)})^t = \vec e_{c_i}$.
We say a chain is \defn{described by a pattern} if its edge labelling
is of the form
\begin{equation*}
    \Big(
        \underbrace{c_1, c_2, \ldots, c_j}_{\text{no $1$s}},
        \overbracket{
            \underbrace{c_{j+1}, \ldots, c_{j+w}}_{\text{permutation}},
            \ldots,
            \underbrace{c_{j+1}, \ldots, c_{j+w}}_{\text{permutation}}
        }^{\text{repeating part}}
    \Big).
\end{equation*}
The condition $c_i \neq 1$ for $1 \leqslant i \leqslant j$ says that adding boxes in
columns $c_1, \ldots, c_j$ of $\lambda^{(0)}$ does not increase the height of
the first column.

Finally, we say that a SCD satisfies the \defn{pattern condition} if each of
its chains is described by a pattern. This implies the extension condition
since the complementary shape of $(\lambda^{(0)})^t + \sum_{1 \leqslant i \leqslant j} \vec e_{c_i}$
is equal to the complementary shape of
$\big((\lambda^{(0)})^t + \sum_{1 \leqslant i \leqslant j} \vec e_{c_i}\big) + m \sum_{1 \leqslant i \leqslant w} \vec e_{c_{j+i}}$
for all $m$.

In Sections~\ref{section.w=3} and~\ref{section.w=4}, we construct symmetric chain decompositions satisfying the restriction, extension and
pattern condition for $w=3,4$. For $w=2$, such a symmetric chain decomposition also exists: in this case,
the lowest weight elements are of the form $1^{m_1}$ with $m_1$ even and boxes
are added in alternating fashion in column 1 and column 2, and there is
a unique pattern.

\begin{conjecture}
Symmetric chain decompositions satisfying the restriction, extension and pattern conditions exist for all $L(w, h)$.
\end{conjecture}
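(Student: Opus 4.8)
The natural line of attack is induction on $h$, with $w$ fixed, since the restriction condition is phrased exactly as a relation between $L(w,h)$ and $L(w,h-1)$. The base case $h=1$ is immediate ($L(w,1)$ is a single saturated chain), and the cases $w=2$ (discussed above) and $w=3,4$ (Sections~\ref{section.w=3} and~\ref{section.w=4}) are already established; the real content is the inductive step for arbitrary $w$. So the plan is: assume a SCD $\mathscr{C}_{h-1}$ of $L(w,h-1)$ satisfying the restriction, extension and pattern conditions, together with the auxiliary invariant that the maximal element of every chain has exactly $h-1$ rows (equivalently $\lambda^t_1=h-1$), and produce such a $\mathscr{C}_h$ for $L(w,h)$.

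The construction of $\mathscr{C}_h$ should have two parts. First, extend every chain of $\mathscr{C}_{h-1}$: if $C\in\mathscr{C}_{h-1}$ has edge labelling $(c_1,\dots,c_j,\pi,\dots,\pi)$ --- a prefix with no $1$'s followed by $m$ copies of a permutation $\pi$ of $\{1,\dots,w\}$, with $\pi$ arranged to begin with $1$ --- let $\widetilde C$ have edge labelling $(c_1,\dots,c_j,\pi,\dots,\pi)$ with $m+1$ copies of $\pi$ and the same minimal element. Appending one $\pi$-block adds exactly $w$ boxes, so the rank of the new maximal element exceeds the old one by $w$; since the minimal element is unchanged and $C$ was rank-symmetric in $L(w,h-1)$, the chain $\widetilde C$ is rank-symmetric in $L(w,h)$, and since one $\pi$-block raises the height of column $1$ by exactly one, $\widetilde C$ stays inside the $w\times h$ box. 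Because $\pi$ begins with $1$ and the old maximal element has exactly $h-1$ rows, the elements of the appended block past its first step already have $h$ rows, so restricting $\widetilde C$ back to $L(w,h-1)$ returns $C$ exactly; the extension condition is then automatic from the pattern form, as noted in the text. Thus $\{\widetilde C\}$ is a family of disjoint, rank-symmetric, pattern-described chains in $L(w,h)$ whose union is $L(w,h-1)$ together with some partitions having exactly $h$ rows.

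Second, one must cover the partitions of $L(w,h)$ with exactly $h$ rows that are not already on some $\widetilde C$. Each such new chain must lie entirely in the set of $h$-row partitions (so that it restricts to the empty chain), be rank-symmetric and saturated in $L(w,h)$, be described by a pattern, have maximal element of exactly $h$ rows, and the new chains must tile the leftover set. This is the crux and, I expect, the main obstacle: identifying this leftover set and decomposing it uniformly in $w$ is morally equivalent to constructing a symmetric chain decomposition of $L(w,h)$ in the first place --- Stanley's conjecture --- which is still open beyond $w=5$. Concretely one needs, for each $w$, a finite list of admissible permutation patterns together with a rule producing, at each rank, the multiset of minimal elements of the new chains, and then a proof that the resulting chains neither overlap nor omit anything and that they extend without collision at the next level; the case $w=5$ required computer assistance (Wen~\cite{Wen.2024}), which signals the difficulty of making this uniform.

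A promising way to make the induction manageable is to strengthen the hypothesis so that the minimal elements and patterns of $\mathscr{C}_h$ are given by an explicit, $h$-stable formula, with the new chains at level $h$ being precisely those whose minimal element already has $h$ rows; one would also want to track how $\mathscr{C}_h$ behaves under the complementation anti-automorphism $\lambda\mapsto(w^h)-\lambda$ of $L(w,h)$, since the extension condition couples the top of a chain (governed by complements) with its restriction, so pairing each chain with its complement should roughly halve the verification. Either way, a proof uniform in $w$ will almost certainly require a genuinely new explicit construction of symmetric chain decompositions of $L(w,h)$, rather than a modification of O'Hara's recursion, since the latter does not expose the minimal elements that the pattern condition demands.
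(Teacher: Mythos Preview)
The statement you are addressing is a \emph{conjecture} in the paper, not a theorem; the authors do not prove it. They establish it only for $w=2,3,4$ via explicit constructions (Proposition~\ref{proposition.L'} and its $w=4$ analogue), and explicitly leave the general case open. So there is no ``paper's own proof'' to compare against.

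Your write-up is not a proof either, and to your credit you say so: you correctly locate the obstruction in the second step of the induction, where one must tile the set of height-$h$ partitions not hit by the extended chains $\widetilde C$, and you observe that doing this uniformly in $w$ is essentially Stanley's SCD conjecture for $L(w,h)$, which is open beyond $w=5$. That diagnosis is accurate. What you have written is a reasonable outline of \emph{why} the conjecture is hard and what structural features a proof would need, not an argument that settles it.

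One technical point worth flagging in your first step: the ``auxiliary invariant'' that every maximal element has exactly $h-1$ rows is not something you can simply assume; it is a property of the specific SCDs the paper builds for $w\le 4$ (it follows from the pattern condition together with the particular patterns chosen, which always contain a $1$), but an arbitrary pattern-satisfying SCD could in principle have a chain whose repeating block is empty and whose prefix avoids column~$1$, in which case the maximal element has fewer than $h-1$ rows. You would need to fold this into the inductive hypothesis and check it is preserved, which again pushes the difficulty into the second step. Also, arranging $\pi$ to begin with $1$ is not innocuous: the pattern condition only says the repeating block is \emph{some} permutation of $\{1,\dots,w\}$, and cyclically rotating it changes the prefix, so you would need to argue the rotation keeps the prefix free of $1$'s.
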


\subsubsection{The case $w=3$}
\label{section.w=3}

We give a symmetric chain decomposition of $L(3,h)$ satisfying the restriction, extension, and pattern conditions.
This is different than symmetric chain decompositions for $L(3, h)$ that have appeared in the literature
\cite{Lindstrom1980, Riess1978, Wen2004}.
One advantage of our symmetric chain decomposition is that we can explicitly describe the minimal and maximal elements in each chain.
As a consequence we get an explicit combinatorial description for $a^\lambda_{(w),(h)}$ in~\eqref{equation.b lambda} when $w=3$
as given in Corollary~\ref{cor.b w=3}.

\begin{theorem}
\label{theorem.w=3}
    Let $h \in \NN$.
    \begin{enumerate}
        \item
            There exists a symmetric chain decomposition $\mathscr{C}$ of
            $L(3, h)$ satisfying the restriction and pattern condition
            (and hence the extension condition).

        \item
            $\lambda$ is a minimal element of a chain in $\mathscr{C}$ iff
            $\lambda \in L(3, h)$ and $\lambda= 3^{m_3} 1^{m_1}$ with
            \begin{equation*}
                {m_1} \geqslant 3{m_3}
                \qquad\text{and}\qquad
                {m_1} \neq 3{m_3} + 1;
            \end{equation*}
            or equivalently, $\lambda = 3^{m_3} 1^{m'_1 + 3 m_3}$ with
            ${m'_1} \geqslant 0$ and ${m'_1} \neq 1$.

        \item
            If the minimal element in a chain $C \in \mathscr{C}$ is of the form
            $\lambda = 3^{m_3}1^{m_1 + 3m_3}$ with $m_1\neq1$,
            then the maximal element in $C$ is
            \begin{enumerate}
            \item $3^{h-m_1-3m_3} 2^{m_1}1^{3m_3}$ if $m_1$ is even;
            \item $3^{h-m_1-3m_3+1} 2^{m_1-2} 1^{3m_3+1}$ if $m_1$ is odd.
            \end{enumerate}

        \item
            $\lambda$ is a maximal element of a chain in $\mathscr{C}$ iff
            its complementary shape is of the form $2^{m_2} 1^{m_1}$ with
            \begin{itemize}[noitemsep]
                \item ${m_1} \leqslant -\frac{4}{3} {m_2} + h$;
                \item $m_1 \equiv 0 \pmod 2$ iff $m_2 \equiv 0 \pmod 3$;
                \item $m_1 \equiv 1 \pmod 2$ iff $m_2 \equiv 1 \pmod 3$.
            \end{itemize}
    \end{enumerate}
\end{theorem}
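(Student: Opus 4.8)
The plan is to construct the symmetric chain decomposition $\mathscr{C}$ of $L(3,h)$ explicitly by induction on $h$, building each chain from its minimal element using a single combinatorial rule for which box to add at each step, and then verify the four parts directly from the construction. Concretely, I would define the chain starting at a minimal element $\lambda = 3^{m_3}1^{m_1+3m_3}$ (with $m_1 \neq 1$) by specifying its edge labelling as a pattern in the sense of the ``pattern condition'' above: first adjoin boxes only in columns $2$ and $3$ (never increasing the height of column $1$) to reach a ``saturated base'' shape, and then repeat a fixed permutation of $\{1,2,3\}$ as the periodic part. The periodic part must add one box in each of the three columns per period, moving the chain up by $3$ in rank each time, which is exactly what makes the chain rank-symmetric inside $L(3,h)$ once one checks that the top of the chain has the complementary rank to the bottom. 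The even/odd split in part (3) reflects two different choices of the initial non-periodic segment.

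The key steps, in order, are: \textbf{(Step 1)} Show that the proposed minimal elements are pairwise distinct and that every element of $L(3,h)$ lies on exactly one of the resulting chains — this is the heart of the ``decomposition'' claim and I expect to prove it by a counting/bijection argument, matching the number of minimal elements at each rank against the differences of rank-sizes of $L(3,h)$, using Stanley's rank-unimodality and the explicit description of $L(3,h)$'s rank generating function (a Gaussian binomial $\binom{h+3}{3}_q$). \textbf{(Step 2)} Verify the chains are saturated and rank-symmetric: saturation is immediate from the edge-labelling description (each step adds one box), and rank-symmetry follows once we identify the maximal element, which is the content of part (3); I would check that $\rank(\text{max}) + \rank(\text{min}) = wh = 3h$ by direct computation in the two parity cases. \textbf{(Step 3)} Prove the restriction condition: show that restricting each chain to $L(3,h-1)$ (deleting shapes with a part equal to $h$, i.e.\ with $\lambda_1$... more precisely with $h$ rows) yields precisely the chains of the analogously-defined $\mathscr{C}$ for $L(3,h-1)$. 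Because the construction is uniform in $h$ and the periodic pattern only ever increases the first column's height by $1$ per period, the truncation of a length-$h$ chain is exactly a length-$(h-1)$ chain with the same minimal element (or the minimal element shifts in a controlled way), so this should reduce to a bookkeeping check. \textbf{(Step 4)} Deduce the pattern condition from the explicit edge labelling (it is essentially built in), and note that the pattern condition implies the extension condition as already observed in the text. \textbf{(Step 5)} Translate the description of maximal elements into the complementary-shape characterization of part (4): if the minimal element is $3^{m_3}1^{m_1+3m_3}$ then, taking complements of the maximal elements found in part (3), one gets $2^{m_1}1^{m_1}$-type shapes, and the inequality ${m_1} \leqslant -\tfrac{4}{3}m_2 + h$ together with the two congruence conditions is just the image of the constraints ${m_1}\geqslant 3m_3$, $m_1 \neq 3m_3+1$, $\lambda \in L(3,h)$ under this complementation, rewritten with $m_2$ in place of $m_1$ (old) and $m_1$ (new) suitably.

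For the actual proof I would organize it as: first state the explicit rule for the chain attached to each candidate minimal element; then prove Step 2 and Step 5 (these are local computations about a single chain); then prove Step 1 (the global decomposition property) via the rank-counting argument; then prove Step 3 (restriction) using the uniformity of the construction together with Step 1 applied at level $h-1$; and finally observe Step 4. I expect \textbf{Step 1 to be the main obstacle}: verifying that the chosen minimal elements are exactly right — that no element of $L(3,h)$ is missed and none is covered twice — requires either a clean bijective argument or a careful induction, and getting the edge-pattern rule to interact correctly with the box $\lambda_1 \leqslant 3$ constraint (so that the periodic pattern never tries to put a fourth box in a row) is exactly where the conditions $m_1 \geqslant 3m_3$ and $m_1 \neq 3m_3+1$ come from; showing these are both necessary and sufficient is the delicate part. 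The restriction condition (Step 3) is the second most delicate point, since it is the genuinely new feature not shared by the SCDs of Lindström, Riess, and Wen, but once the construction is manifestly the ``same'' for all $h$ it should follow formally.
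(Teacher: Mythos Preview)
Your plan is workable in outline, but Step~1 as you have sketched it has a real gap, and the paper takes a structurally different route that avoids it.

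The weakness in Step~1: matching the number of minimal elements at rank $r$ against the rank-size differences $\#\{\text{rank } r\} - \#\{\text{rank } r-1\}$ only tells you that the \emph{number} of chains is correct. It does not by itself prove that the chains you have built are pairwise disjoint or that they cover $L(3,h)$. Two chains built from distinct minimal elements by your edge-labelling rule could in principle collide at some intermediate shape, and the counting argument would not detect this. To close the gap you would still need either (i) an explicit argument that each $\lambda \in L(3,h)$ has a unique chain-predecessor (a well-defined raising operator), or (ii) a direct disjointness proof together with a total-cardinality count. Option~(i) is essentially what the paper does, so your ``counting'' shortcut does not actually save work here.

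The paper's approach differs from yours in two substantive ways. First, it uses a recursive shell decomposition
\[
L(3,h) \;=\; L'(3,h) \,\uplus\, \bigl\{\lambda \oplus (3,1,1,1) : \lambda \in L(3,h-4)\bigr\},
\]
where $L'(3,h)$ is the set of partitions not containing $3\,1\,1\,1$ as a subpartition. This immediately explains why the minimal elements have the form $3^{m_3}1^{m_1}$ (each factor of $3\,1\,1\,1$ records one level of recursion) and reduces the problem to constructing a SCD of the thin shell $L'(3,h)$, whose minimal elements are simply $1^{m_1}$ with $m_1 \neq 1$. Second, on $L'(3,h)$ the paper does not build chains top-down from their minima; instead it defines local lowering and raising operators $f,e \colon L'(3,h) \to \{1,2,3\}$ by an eight-case rule (split into ``phase~1'' and ``phase~2'' according to parities of $m_1,m_2,m_3$), and then verifies case by case that $f$ and $e$ are partial inverses. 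This local check automatically yields the partition-into-chains property with no global counting, and the pattern and restriction conditions fall out of the same case analysis. Your top-down construction (minima $\to$ patterns $\to$ chains) and the paper's local construction ($e,f$ operators $\to$ chains) ought to produce the same SCD, but the local-operator formulation turns your ``main obstacle'' into a finite verification rather than an open-ended bijection argument.
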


The proof of Theorem~\ref{theorem.w=3} is given after the next corollary by decomposing $L(3,h)$ into smaller components
and then giving a symmetric chain decomposition of the smaller component in Proposition~\ref{proposition.L'} below.

\begin{corollary}
\label{cor.b w=3}
We have $s_3[s_h](x,y) = \sum_{\lambda=(\lambda_1,\lambda_2) \vdash 3h} a^\lambda_{(3),(h)} s_\lambda(x,y)$ with
\[
	a^\lambda_{(3),(h)} = \#\{ \mu \vdash \lambda_2 \mid \mu=3^{m_3} 1^{m_1} \text{ such that } m_1+m_3\leqslant h,
	m_1 \geqslant 3m_3 \text{ and } m_1 \neq 3m_3+1\}.
\]
\end{corollary}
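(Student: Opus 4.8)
The plan is to derive the corollary directly from Theorem~\ref{theorem.w=3} combined with the formula~\eqref{equation.b L} for $a^\lambda_{(w),(h)}$ in the $w=3$ case. Recall from the discussion preceding~\eqref{equation.b L} that, once a symmetric chain decomposition $\mathscr{C}$ of $L(3,h)$ is fixed, the saturated chains give an injection from the rank-$(\lambda_2-1)$ elements to the rank-$\lambda_2$ elements, so that $a^\lambda_{(3),(h)}$ equals the number of chains in $\mathscr{C}$ whose minimal element has rank exactly $\lambda_2$. Thus the whole task reduces to counting the chains of $\mathscr{C}$ with minimal element of a prescribed rank.

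First I would invoke part~(2) of Theorem~\ref{theorem.w=3}, which states that the minimal elements of chains in $\mathscr{C}$ are exactly the partitions $\mu = 3^{m_3} 1^{m_1} \in L(3,h)$ with $m_1 \geqslant 3 m_3$ and $m_1 \neq 3 m_3 + 1$. The condition $\mu \in L(3,h)$ means precisely that $\mu$ has at most $h$ parts, i.e. $m_1 + m_3 \leqslant h$ (the width condition $\mu_1 \leqslant 3$ is automatic for partitions of this shape). Second, I would observe that $\rank(\mu) = 3 m_3 + m_1 = |\mu|$, so the chains with minimal element of rank $\lambda_2$ are in bijection with the partitions $\mu \vdash \lambda_2$ of the form $3^{m_3} 1^{m_1}$ satisfying $m_1 + m_3 \leqslant h$, $m_1 \geqslant 3 m_3$, and $m_1 \neq 3 m_3 + 1$. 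Combining this count with $a^\lambda_{(3),(h)} = \#\{$ chains with minimal element of rank $\lambda_2\}$ yields exactly the stated formula.

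The only subtlety worth flagging is the bookkeeping at the boundary ranks: one must make sure that the injection interpretation of~\eqref{equation.b L} is being applied with the correct indexing, i.e.\ that "chains beginning at rank $\lambda_2$" genuinely corresponds to $\#\{\mu : \rank(\mu)=\lambda_2\} - \#\{\mu : \rank(\mu)=\lambda_2-1\}$ and hence to the stated set of $\mu$'s counted by rank. Since $\mathscr{C}$ is a genuine SCD (each chain is saturated and rank-symmetric), every element of rank $\lambda_2-1$ lies on a chain that continues up to rank $\lambda_2$, while the "new" elements of rank $\lambda_2$ are precisely the minimal elements of chains starting there; this is exactly the content of~\eqref{equation.b L}. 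No step here is a genuine obstacle: the real work was already done in establishing Theorem~\ref{theorem.w=3}(2), and the corollary is a direct translation of that description of minimal elements into a counting statement. The main thing to be careful about is simply to record the equivalence $\mu \in L(3,h) \iff m_1 + m_3 \leqslant h$ for partitions of the form $3^{m_3}1^{m_1}$, so that the constraint appears in the clean form stated in the corollary.
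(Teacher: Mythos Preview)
Your proposal is correct and follows essentially the same route as the paper: invoke \eqref{equation.b L}, interpret the difference as the number of chains in the SCD $\mathscr{C}$ whose minimal element has rank $\lambda_2$, and then read off the description of those minimal elements from Theorem~\ref{theorem.w=3}(2), translating $\mu\in L(3,h)$ into $m_1+m_3\leqslant h$. Your extra remarks on why the ``injection'' picture is valid (using rank-symmetry of the chains and $\lambda_2\leqslant 3h/2$) are a helpful elaboration of what the paper leaves implicit.
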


\begin{proof}
By~\eqref{equation.b L}, we have $a^\lambda_{(3),(h)} = \#\{\mu \in L(3,h) \mid \rank(\mu)=\lambda_2\} -
\#\{\mu \in L(3,h) \mid \rank(\mu) = \lambda_2-1\}$, which is the number of minimal elements at rank $\lambda_2$ in a symmetric chain
decomposition of $L(3,h)$. The result follows from Theorem~\ref{theorem.w=3} (2).
\end{proof}

\begin{definition}
We say that a partition $\lambda = \ell^{m_\ell} \cdots 2^{m_2} 1^{m_1}$ \defn{contains $\mu$ as a
subpartition} if $\mu = \ell^{a_\ell} \cdots 2^{a_2} 1^{a_1}$
and $a_i \leqslant m_i$ for all $1 \leqslant i \leqslant \ell$. In this case, we write
$\lambda = \mu \oplus \nu$, where $\nu = \ell^{m_\ell-a_\ell} \cdots 2^{m_2-a_2} 1^{m_1-a_1}$.
\end{definition}

Define $L'(3,h)$ to be the partitions $\lambda=3^{m_3} 2^{m_2} 1^{m_1}$ in $L(3,h)$ with either $m_3=0$ or $m_1\leqslant 2$.
That is $L'(3,h)$ is the set of partitions that do not contain $3111$ as a subpartition.
If $L'(3,h)$ has a symmetric chain decomposition,
then $L(3,h)$ also has a symmetric chain decomposition since
\begin{equation}
\label{equation.L decomp}
	L(3,h) = L'(3,h) \uplus \{ \lambda\oplus(3,1,1,1) \mid  \lambda \in L(3,h-4)\}.
\end{equation}
The decomposition in~\eqref{equation.L decomp} is related to~\cite[Theorem 2.1]{Tetreault2020} and~\cite[Lemma 5.5]{COSSZ.2022}.

Note that the set of partitions $\{ \lambda\oplus(3,1,1,1)\mid \lambda \in L(3,h-4)\}$
is isomorphic to $L(3,h-4)$ as a poset. Hence by induction on $h$, we can assume
that $L(3,h-4)$ has a symmetric chain decomposition with the stated properties.  Since the chains of
$L'(3,h)$ and $\{ \lambda\oplus(3,1,1,1) \mid \lambda \in L(3,h-4)\}$ are both
centered at partitions of size $\lceil 3h/2\rceil = \lceil 3(h-4)/2 \rceil + 6$,
the symmetric chain decomposition of $L'(3,h) \uplus \{ \lambda\oplus(3,1,1,1) \mid \lambda \in L(3,h-4)\}$ is
a symmetric chain decomposition of $L(3,h)$.
Hence the proof of Theorem~\ref{theorem.w=3} follows from the following proposition.

\begin{proposition}
\label{proposition.L'}
There is a symmetric chain decomposition $\mathscr C$ of $L'(3,h)$
satisfying:
\begin{enumerate}
\item $\mathscr C$ satisfies the restriction and pattern condition.
\item The minimal element of a chain $C$ in $\mathscr C$ is of the form $1^{m_1}$ with $m_1 \neq 1$.
\item  If the minimal element of a chain $C$ in $\mathscr C$ is $1^{m_1}$, then the corresponding maximal element in $C$ is
\begin{enumerate}
\item $3^{h-m_1}2^{m_1}$ if $m_1$ is even;
\item $3^{h-m_1+1}2^{m_1-2}1$ if $m_1$ is odd.
\end{enumerate}
\end{enumerate}
\end{proposition}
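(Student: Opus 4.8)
The plan is to construct $\mathscr{C}$ explicitly by building each chain from its minimal element according to a single pattern that satisfies the pattern condition, and then to verify that the resulting chains partition $L'(3,h)$, are rank-symmetric, and restrict correctly. First I would describe the chains directly. Given a minimal element $1^{m_1}$ with $m_1 \neq 1$ (and $m_1 \leqslant h$), I attach boxes according to the edge labelling pattern which, in the repeating part, cycles through $(2,3,1)$ (or some fixed permutation of $\{1,2,3\}$) with a suitable non-repeating prefix that contains no $1$s: concretely, from $1^{m_1}$ one first adds a box in column $2$ of every row to reach $2^{m_1}$ (prefix of $1$-free steps, length $m_1$), then alternately adds boxes in column $3$ and column $1$, i.e.\ the repeating block is a permutation of $\{1,3,\dots\}$ of length $w=3$. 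One checks that following this pattern from $1^{m_1}$ terminates exactly at the claimed maximal element: $3^{h-m_1}2^{m_1}$ when $m_1$ is even, and $3^{h-m_1+1}2^{m_1-2}1$ when $m_1$ is odd (the parity of $m_1$ governs which of the two ``phases'' of the alternating $(3,1)$ block the chain stops in, forced by the constraint $\ell(\lambda)\leqslant h$ together with membership in $L'(3,h)$, i.e.\ avoidance of $3111$ as a subpartition). This immediately gives parts (2) and (3), and the pattern condition is built in by construction; the extension condition then follows from the implication proved in the text.

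The core of the argument is that these chains are pairwise disjoint and cover all of $L'(3,h)$, and that each chain is rank-symmetric about $\lceil 3h/2\rceil$. For disjointness and covering, I would argue that every $\lambda \in L'(3,h)$ lies on exactly one such chain by exhibiting a retraction: given $\lambda = 3^{m_3}2^{m_2}1^{m_1}$ in $L'(3,h)$ (so $m_3=0$ or $m_1\leqslant 2$), one reads off which chain it belongs to by "running the pattern backwards" — peel off boxes in the reverse of the prescribed order until reaching a partition of the form $1^{k}$ with $k\neq 1$, and check this recovery is well-defined and single-valued. The case analysis is small: either $m_3 = 0$, in which case $\lambda = 2^{m_2}1^{m_1}$ lies on the chain with minimal element $1^{m_1+m_2}$ if that number is $\neq 1$ and otherwise (the exceptional $2\cdot1$-type cases) on a neighboring chain; or $m_1 \leqslant 2$, handled similarly using the alternating $(3,1)$ tail. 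For rank-symmetry, it suffices to check that the maximal element of the chain starting at $1^{m_1}$ has rank $3h - m_1 = 3h - \rank(1^{m_1})$, i.e.\ the chain from $1^{m_1}$ and the complementary chain have mirrored rank endpoints; this is a direct computation from the formulas in part (3): $\rank(3^{h-m_1}2^{m_1}) = 3(h-m_1)+2m_1 = 3h-m_1$ and likewise $\rank(3^{h-m_1+1}2^{m_1-2}1) = 3(h-m_1+1)+2(m_1-2)+1 = 3h-m_1$.

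Finally, for the restriction condition, I would check that deleting the elements of a chain that do not lie in $L'(3,h-1)$ (equivalently, partitions with a part equal to... length $h$) yields precisely the chains of the analogously-constructed SCD of $L'(3,h-1)$. Because every chain is generated from its minimal element by the \emph{same} pattern, restricting to $L(w,h-1)$ simply truncates each chain at the first point where a box would be placed in the $h$-th row; the extension condition guarantees that the truncated maximal element has the correct complement in $L(w,h-1)$, and one checks that the minimal element is unchanged (minimal elements $1^{m_1}$ with $m_1 \leqslant h-1$ stay minimal, and chains whose minimal element has $m_1 = h$ become trivial chains consisting of a single element of $L'(3,h-1)$, which must then itself be covered — this is the one place needing care).

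The main obstacle I anticipate is the bookkeeping in the covering/disjointness step: verifying that the exceptional minimal elements (the forbidden $m_1 = 1$, equivalently $m_1 = 3m_3+1$ in the un-reduced parametrization) are nonetheless accounted for — i.e.\ that the partitions $2^{m_2}1$ and their analogues, which cannot begin a chain, are picked up in the interior of the neighboring chain — and doing this uniformly in $h$ so that the induction in \eqref{equation.L decomp} goes through cleanly. The parity split in part (3) is exactly the shadow of this exceptional case, so the two must be reconciled simultaneously.
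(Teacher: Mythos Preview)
Your high-level strategy is right, but the concrete construction you propose does not give disjoint chains, and this is a structural obstruction rather than a bookkeeping detail. If every chain uses the uniform prefix ``fill column~$2$ to reach $2^{m_1}$'', then the chain starting at $\emptyset$ has empty prefix and its repeating block must begin with a column-$1$ step (the only legal move from $\emptyset$), forcing the block to be $(1,2,3)$ or $(1,3,2)$. But then the chain from $1^{m_1}$ (any $m_1\geqslant 2$), upon reaching $2^{m_1}$, next visits $2^{m_1}1$ --- which already lies in the prefix of the chain from $1^{m_1+1}$. Your alternative ``alternately add in columns $3$ and $1$'' is not a $3$-cycle at all, and in any case it leaves $L'(3,h)$ after a few steps (from $2^{m_1}$ with $m_1\geqslant 3$ one reaches $3^3 2^{m_1-3} 1^3$, which has $m_3>0$ and three parts equal to~$1$). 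The retraction you sketch cannot repair this: running a uniform pattern backwards from a shared element does not determine a unique chain.

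What the paper does is make the construction parity-dependent: for even $m_1$ the prefix has length $m_1$ (ending at $2^{m_1}$) and the repeating block is $(1,2,3)$; for odd $m_1\geqslant 3$ the prefix has length only $m_1-2$ (ending at $2^{m_1-2}1^2$) and the block is $(3,2,1)$. Rather than building chains top-down, the paper encodes this as a pair of local operators $e,f$ on $L'(3,h)$ with values in $\{1,2,3\}$, defined by an eight-case analysis on the multiplicities $m_1,m_2,m_3$ (a ``phase~1'' where $m_3=0$ and boxes go into column~$2$, and a ``phase~2'' with six subcases indexed by $m_1\in\{0,1,2\}$ and the parity of $m_2$). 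Because $e$ is then a well-defined function of $\lambda$ alone, every element has a unique predecessor and disjointness is automatic --- no separate covering argument is needed. The parity split in part~(3) of the proposition reflects two genuinely different patterns, not merely two stopping points of a single pattern as you suggest.
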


\begin{proof}
A partition $\lambda \in L'(3,h)$ is either a minimal element in a chain (denoted by $\lambda \in \mathsf{HW}$ if $\lambda = 1^{m_1}$
with $m_1\neq 1$), or a maximal element (denoted by $\lambda \in \mathsf{LW}$ if $\lambda = 3^{h-m_2}2^{m_2}$
with $m_2$ even or $\lambda = 3^{h-m_2-1}2^{m_2}1$ if $m_2$ is odd), or it has both a predecessor and a successor in its chain.
To this end, we define two functions
\[
	f \colon L'(3,h) \backslash \mathsf{LW} \rightarrow \{1,2,3\} \quad \text{and} \quad
	e \colon L'(3,h) \backslash \mathsf{HW} \rightarrow \{1,2,3\}
\]
which, if $\lambda^{(c)} \lessdot \lambda^{(c+1)}$ in the symmetric chain decomposition,
then $f(\lambda^{(c)})= e(\lambda^{(c+1)})$ is equal to the column index of the cell $\lambda^{(c+1)}\backslash\lambda^{(c)}$.
We can view $f$ and $e$ as $\mathfrak{sl}_2$ lowering and raising operators. Hence we call the elements in $\mathsf{HW}$ (resp. $\mathsf{LW}$)
\defn{highest weight elements} (resp. \defn{lowest weight elements}).
Note that when $\lambda\in \mathsf{LW}$ and $m_2$ is odd, then $m_2\leqslant h-1$ since $h-m_2-1\geqslant 0$.

Say that a partition $\lambda=3^{m_3} 2^{m_2} 1^{m_1}$ is in \defn{phase~1} if $m_3=0$ and either
$m_1+m_2$ is even and $m_1\geqslant 1$
or $m_1+m_2$ is odd and $m_1>2$; otherwise we say that $\lambda$ is in \defn{phase~2}.

We now define $f$ and $e$. Any partition $\lambda \in L'(3,h)$ is either
\begin{enumerate}
\item 
in phase 1 and of odd length with $m_1>2$.  In this case, $f(\lambda)=2$
and $e(\lambda)=2$ if $m_2>0$ (if $m_2=0$, then it is of highest weight).\\
The highest weight in the chain is $1^{m_1+m_2}$,\\ the lowest weight
in the chain is $3^{h-(m_1+m_2)+1}2^{m_1+m_2-2}1$.
\item  
in phase 1 and of even length.  In this case, $f(\lambda)=2$
and $e(\lambda)=2$ if $m_2>0$ (if $m_2=0$, then it is of highest weight).\\
The highest weight in the chain is $1^{m_1+m_2}$,\\ the lowest weight
in the chain is $3^{h-(m_1+m_2)}2^{m_1+m_2}$.
\item 
in phase 2, $m_1 =0$ and $m_2$ is even. In this case, $f(\lambda)=1$ if $\ell(\lambda)<h$ and $e(\lambda)=3$
(if $m_3=0$, then $e(\lambda)=2$ or $\lambda$ is empty and highest weight).\\
The highest weight in the chain is $1^{m_2}$,\\
the lowest weight in the chain is $3^{h-m_2}2^{m_2}$.
\item 
in phase 2, $m_1=0$ and $m_2$ is odd. In this  case, $f(\lambda)=3$ and $e(\lambda)=2$.\\
The highest weight in the chain is $1^{m_2-1}$,\\
the lowest weight in the chain is $3^{h-m_2+1}2^{m_2-1}$.
\item 
in phase 2, $m_1=1$ and $m_2$ is even. In this case, $f(\lambda)=2$ and $e(\lambda)=1$.\\
The highest weight in the chain is $1^{m_2}$,\\
the lowest weight in the chain is $3^{h-m_2}2^{m_2}$.
\item 
in phase 2, $m_1 =1$ and $m_2$ is odd. In this case, $f(\lambda)=1$ if $\ell(\lambda)<h$ and $e(\lambda)=2$.\\
The highest weight in the chain is $1^{m_2+2}$,\\
the lowest weight in the chain is $3^{h-m_2-1}2^{m_2}1$.
\item 
in phase 2, $m_1 =2$ and $m_2$ is even. In this case, $f(\lambda)=2$ and $e(\lambda)=3$.\\
The highest weight in the chain is $1^{m_2+3}$,\\
the lowest weight in the chain is $3^{h-m_2-2}2^{m_2+1}1$.
\item 
in phase 2, $m_1 =2$ and $m_2$ is odd. In this case, $f(\lambda)=3$ and $e(\lambda)=1$
(unless $m_3=0$ in which case $e(\lambda)=2$).\\
The highest weight in the chain is $1^{m_2+2}$,\\
the lowest weight in the chain is $3^{h-m_2-1}2^{m_2}1$.
\end{enumerate}
One can check explicitly, that $f(e(\lambda))=\lambda$ if $e(\lambda)$ is defined and $e(f(\lambda))=\lambda$ if $f(\lambda)$ is defined. Hence they
are partial inverses of each other and indeed define a symmetric chain decomposition.
If the highest weight element is $1^{m_1}$ with $m_1$ even (resp. odd), then eventually the chain defined by the operator $f$ will oscillate between
cases $(3)\to (5) \to (4) \to (3) \to \cdots$ (resp. $(8) \to (7) \to (6) \to (8) \to \cdots$) with pattern 123 (resp. 321). Hence the chains satisfy the pattern condition.
The restriction condition follows by construction.
\end{proof}

\begin{figure}[htpb]
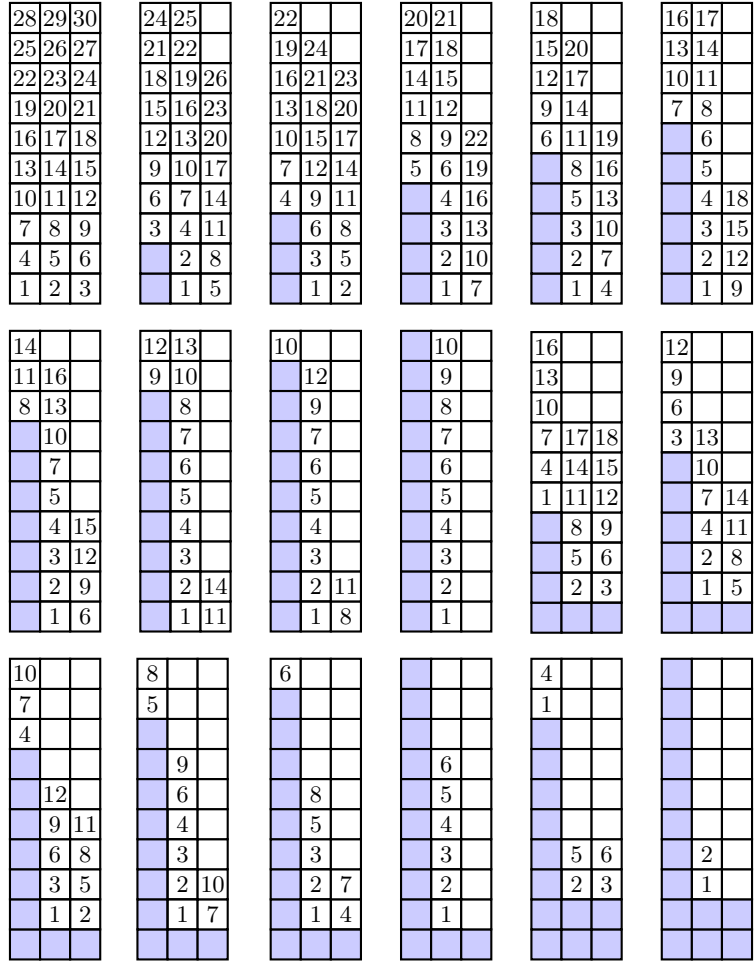

    \centering
    \begin{equation*}

    \end{equation*}
    \caption{The symmetric chain decomposition for $L(3,10)$ satisfying
        the restriction and pattern conditions.  The blue cells are the partitions
        representing the highest weights.  The number entries indicate in which order boxes are added for the given chain.}
    \label{fig:SCD-w3-C0}
\end{figure}

\begin{remark}
Note that by the decomposition~\eqref{equation.L decomp}, the chains are separated into strata. The `outer shell' are all those partitions of the form
$3^{m_3}2^{m_2}1^{m_1}$ with $m_3=0$ or $m_1\leqslant 2$ and the `inner shell' are those with $m_3>0$ and $m_1\geqslant 3$.
In other words we are stratifying the partitions in $L(3,h)$ by the number of copies
of the partition $3111$ they contain.
\end{remark}

\begin{remark}
The maps $f$ and $e$ in the proof of Proposition~\ref{proposition.L'} can be interpreted as $\mathfrak{sl}_2$ crystal operators.
Theorem~\ref{theorem.w=3} (2) and (4) give the explicit highest and lowest elements in this $\mathfrak{sl}_2$ crystal.
\end{remark}

\begin{example}
The symmetric chain decomposition for $L(3,10)$ is given in Figure~\ref{fig:SCD-w3-C0}.
In Figure~\ref{figure.3d} we plot the points $(m_1, m_2, m_3)$ in $3d$ for each partition $3^{m_3} 2^{m_2} 1^{m_1}$ in $L(3,10)$.
We color the edges between the points {\it blue} if the highest weight is $1^{2r+1}$ and we just add cells in column 2,
{\it red} if the highest weight is $1^{2r}$ and we just add cells in column 2,
{\it magenta} if the highest weight is $1^{2r}$ and we fill with pattern $123$ and {\it orange}
if the highest weight is $1^{2r+1}$ and we fill with pattern $321$.
The highest weights are the black dots and the lowest weights are the blue dots.
Notice that a general chain begins with blue edges and finishes with orange edges or it
begins with red edges and finishes with magenta edges.
\begin{figure}[b]
\includegraphics[width=3in]{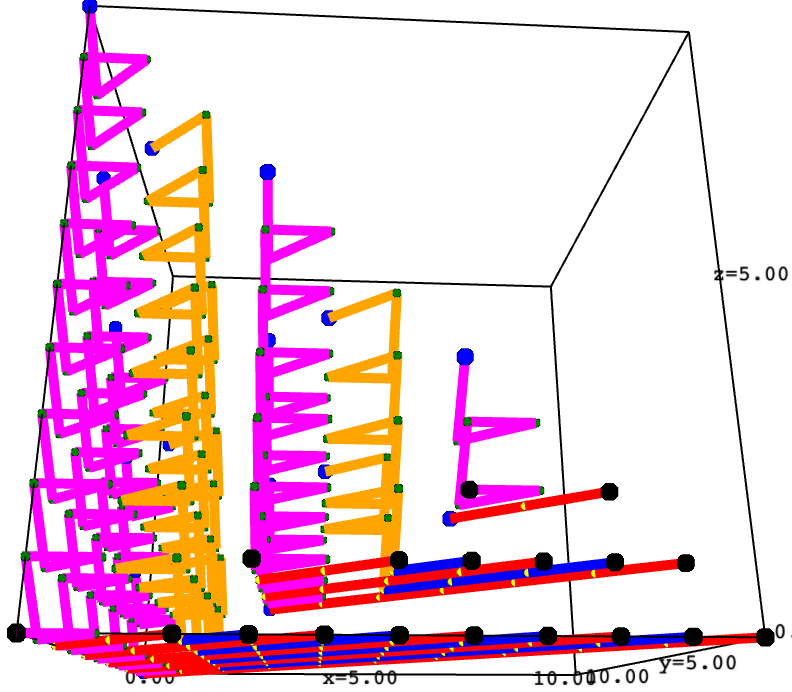}
    \caption{Symmetric chain decomposition for $L(3, 10)$ matching that of Figure \ref{fig:SCD-w3-C0}.
      The blue edges follow rule 1, red edges follow rule 2, magenta edges follow rules 3,4 and 5,
      orange edges follow rules 6,7 and 8.
      \label{figure.3d}}
\end{figure}
\end{example}

\subsubsection{The case $w=4$}
\label{section.w=4}

We give a new symmetric chain decomposition of $L(4,h)$ satisfying the restriction and pattern condition (and hence the extension condition).
As for $w=3$, we are able to describe the minimal and maximal elements explicitly.
As a consequence, we get an explicit combinatorial description for $a^\lambda_{(4),(h)}$ in~\eqref{equation.b lambda} when $w=4$.

\bigskip

\begin{theorem}
\label{theorem.w=4}
    Let $h \in \NN$.
    \begin{enumerate}
        \item
            There exists a symmetric chain decomposition $\mathscr C$ of
            $L(4, h)$ satisfying the restriction and pattern condition.

        \item
            $\lambda$ is a minimal element of a chain in $\mathscr C$ iff
            $\lambda \in L(4, h)$ and $\lambda = 4^{m_4} 2^{m_2} 1^{m_1}$ with
            \begin{equation*}
                {m_2} \in 2\ZZ, \qquad
                {m_1} \geqslant 2{m_4}, \qquad
                {m_1} \neq 2{m_4}+1.
            \end{equation*}

        \item
            If $\lambda$ is a minimal element of the form $\lambda = 4^{m_4} 2^{2m_2} 1^{m_1+2m_4}$ with $m_1\neq1$ in a chain $C$,
            then the corresponding maximal element in $C$ is
            \begin{enumerate}
            \item $4^{h-m_1-2m_2-2m_4} 3^{m_1} 2^{2m_2} 1^{2m_4}$ if $m_1$ is even;
            \item $4^{h-m_1-2m_2-2m_4+1} 3^{m_1-2} 2^{2m_2+1} 1^{2m_4}$ if $m_1$ is odd.
            \end{enumerate}

        \item
            $\lambda$ is a maximal element of a chain in $\mathscr C$ iff
            its complementary shape is of the form $3^{m_3} 2^{m_2} 1^{m_1}$ and satisfies
            \begin{itemize}[noitemsep]
                \item $m_3$ and $m_1 + m_2$ are both even;
                \item ${m_1 + m_2} \leqslant -\frac{3}{2} {m_3} + h$
                    with equality iff $m_1$ and $m_2$ are both even.
            \end{itemize}
    \end{enumerate}
\end{theorem}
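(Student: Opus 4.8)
The plan is to adapt the strategy behind Theorem~\ref{theorem.w=3}. First I would peel off a ``shell'': let $L'(4,h)$ be the set of $\lambda = 4^{m_4} 3^{m_3} 2^{m_2} 1^{m_1} \in L(4,h)$ with $m_4 = 0$ or $m_1 \leqslant 1$, that is, those not containing $(4,1,1)$ as a subpartition. In analogy with~\eqref{equation.L decomp} one has
\begin{equation*}
    L(4,h) = L'(4,h) \uplus \{ \lambda \oplus (4,1,1) \mid \lambda \in L(4, h-3) \}.
\end{equation*}
The map $\lambda \mapsto \lambda \oplus (4,1,1)$ adds $(3,1,1,1)$ to the conjugate, hence is an order embedding (the order on $L(4,\cdot)$ being componentwise on conjugates) that shifts every rank by $6$; so the second block is isomorphic to $L(4,h-3)$ and rank-symmetric about $2h = 2(h-3)+6$. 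Since $L(4,h)$ is rank-symmetric about $2h$, so is $L'(4,h)$. Arguing by induction on $h$, the peeled block already carries the desired SCD (apply $\oplus(4,1,1)$ to a SCD of $L(4,h-3)$), and its minimal elements are exactly the ones in Theorem~\ref{theorem.w=4}(2) with $m_4 > 0$. Thus the theorem reduces to the $w=4$ analogue of Proposition~\ref{proposition.L'}: an SCD of $L'(4,h)$ satisfying the restriction and pattern conditions, with minimal elements $2^{2m_2} 1^{m_1}$ ($m_1 \neq 1$) and matching maximal elements as in Theorem~\ref{theorem.w=4}(3) with $m_4 = 0$.

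To build this SCD of $L'(4,h)$ I would mimic the proof of Proposition~\ref{proposition.L'}: define a pair of partial maps
\begin{equation*}
    f \colon L'(4,h) \setminus \mathsf{LW} \to \{1,2,3,4\}, \qquad e \colon L'(4,h) \setminus \mathsf{HW} \to \{1,2,3,4\},
\end{equation*}
with $f(\lambda^{(c)}) = e(\lambda^{(c+1)})$ equal to the column index of the cell $\lambda^{(c+1)} \setminus \lambda^{(c)}$, thought of as $\mathfrak{sl}_2$ lowering and raising operators. The case list is to be organized by a ``phase'' (as in Proposition~\ref{proposition.L'}) together with the residues of the three column heights $m_1+m_2+m_3$, $m_2+m_3$, $m_3$ and of $m_1$; since the first column is now one unit wider than in the $w=3$ case, one expects roughly twice as many cases, and quite possibly a further internal split of $L'(4,h)$ (for instance by $m_2 \bmod 2$) is needed before the rules become uniform. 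The verification is then: check $e \circ f = \mathrm{id}$ and $f \circ e = \mathrm{id}$ wherever both sides are defined, so $e,f$ are mutually inverse partial bijections and carve $L'(4,h)$ into saturated chains; and compute, case by case, the rank of the minimal element $2^{2m_2} 1^{m_1}$ of a chain and the rank of its maximal element, checking that they sum to $4h$ so the chains are rank-symmetric about $2h$. Together with the inductive SCD on the peeled block (whose chains are centered at the same rank) this produces the SCD $\mathscr C$ of part~(1).

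Parts~(2)--(4) then follow from the explicit description. Part~(2) is the list of $\lambda \in L'(4,h)$ on which $e$ is undefined, together with the peeled-off minimal elements ($m_4 > 0$); part~(3) is obtained by iterating $f$ from a minimal element until it halts and recording the partition reached, the parity of $m_1$ giving the two subcases exactly as in Proposition~\ref{proposition.L'}(3); part~(4) is the translation of the set of maximal elements into a statement about their complementary shapes. The restriction condition should hold by construction, provided the $f$-rules at height $h$ are set up so that deleting from a chain the elements outside $L'(4,h-1)$ yields precisely the chain prescribed by the same rules at height $h-1$ (with the peeled block restricting by the inductive hypothesis). For the pattern condition one must show that each chain's edge-labelling, after an initial block with no $1$s (where boxes are added only in columns $2,3,4$), is eventually periodic with period a single permutation of $\{1,2,3,4\}$; this amounts to checking that the cyclic sequence of cases that $f$ visits eventually settles into a $4$-cycle whose labels exhaust $\{1,2,3,4\}$, the $w=4$ analogue of the $123$/$321$ alternation for $w=3$.

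I expect the pattern condition to be the main obstacle. It is the truly rigid requirement --- the feature absent from the SCDs of West, Ries and Wen --- because it forces the height of the first column to grow only in lockstep with a completed period, so at essentially every step the choice of which column to add a box in is constrained; a naive case analysis will violate it. Concretely, the hard part is to engineer the $f$-rules so that no chain ever adds a box in column~$1$ ``out of turn'' while still landing on the prescribed maximal elements and still restricting correctly; making all four requirements (mutual inverseness, rank-symmetry, restriction, pattern) hold simultaneously with the enlarged case list for $w=4$ is where essentially all of the work lies.
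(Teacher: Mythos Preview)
Your overall architecture matches the paper's: peel off a shell via $\oplus(4,1,1)$, then construct an explicit SCD on the remainder by $\mathfrak{sl}_2$-style operators $e,f$, and read parts~(2)--(4) off the explicit description. The one structural step you are missing is a \emph{second} peeling. The paper does not build $e,f$ on $L'(4,h)$ directly; instead it defines $L''(4,h)\subseteq L'(4,h)$ as those partitions that also avoid $(2,2)$ as a subpartition (equivalently $m_2\leqslant 1$), giving
\[
    L'(4,h) \;=\; L''(4,h) \,\uplus\, \{\lambda\oplus(2,2)\mid \lambda\in L'(4,h-2)\},
\]
and handles the second block by a separate induction on $h$ within $L'$. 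Only $L''(4,h)$ then needs explicit operators. This is what makes the construction tractable: the minimal elements in $L''(4,h)$ are simply $1^{m_1}$ with $m_1\neq 1$, and the case analysis collapses to ten cases (two in ``phase~1'' by parity of $\ell(\lambda)$, eight in ``phase~2'' indexed by $(m_1,m_2)\in\{0,1\}^2$ and the parity of $m_3$), with eventual patterns $1234$ or $4321$. The factors $2^{2m_2}$ in the minimal elements of part~(2) arise entirely from iterating $\oplus(2,2)$, not from the base construction.

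Your parenthetical ``further internal split \ldots\ for instance by $m_2\bmod 2$'' is pointed in the right direction but is not the right split: one peels by containment of $(2,2)$, not by parity of $m_2$. Without this second peeling, building $e,f$ on all of $L'(4,h)$ with minimal elements $2^{2m_2}1^{m_1}$ forces you to track $m_2$ through every phase transition, roughly multiplying the case count and --- exactly as you diagnose --- making the pattern condition hard to enforce. The double peeling is precisely the device that dissolves that difficulty.
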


\begin{corollary}
\label{cor.b w=4}
We have $s_4[s_h](x,y) = \sum_{\lambda=(\lambda_1,\lambda_2) \vdash 4h} a^\lambda_{(4),(h)} s_\lambda(x,y)$ with
\begin{multline*}
	a^\lambda_{(4),(h)} = \#\{ \mu \vdash \lambda_2 \mid \mu=4^{m_4} 2^{m_2} 1^{m_1} \text{ such that } m_1+m_2+m_4\leqslant h,\\
	 {m_2} \in 2\ZZ, {m_1} \geqslant 2{m_4}, {m_1} \neq 2{m_4}+1\}.
\end{multline*}
\end{corollary}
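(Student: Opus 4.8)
The plan is to reproduce, for $w=4$, the short argument used for Corollary~\ref{cor.b w=3}. The Schur expansion $s_4[s_h](x,y) = \sum_{\lambda=(\lambda_1,\lambda_2)\vdash 4h} a^\lambda_{(4),(h)} s_\lambda(x,y)$ is already established in~\eqref{equation.b lambda}, so the only thing left to prove is the combinatorial formula for $a^\lambda_{(4),(h)}$. By~\eqref{equation.b L},
\[
a^\lambda_{(4),(h)} = \#\{\mu \in L(4,h) \mid \rank(\mu)=\lambda_2\} - \#\{\mu \in L(4,h) \mid \rank(\mu) = \lambda_2-1\},
\]
so it suffices to identify this difference with the number of minimal elements of rank $\lambda_2$ in the symmetric chain decomposition $\mathscr C$ of $L(4,h)$ furnished by Theorem~\ref{theorem.w=4}(1), and then to read off those minimal elements from Theorem~\ref{theorem.w=4}(2).

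For the first step I would invoke the rank-symmetry of $L(4,h)$, which has top rank $4h$ (proved in~\cite{Stanley1980}, and in any case a consequence of the existence of $\mathscr C$): each chain of $\mathscr C$ is saturated and rank-symmetric, so a chain whose minimal element has rank $r$ has maximal element of rank $4h-r$ and contributes exactly one element to every rank in $[r,4h-r]$. Since $\lambda=(\lambda_1,\lambda_2)\vdash 4h$ with $\lambda_1\geqslant\lambda_2$ forces $\lambda_2\leqslant 2h$, a chain meets rank $\lambda_2$ precisely when its minimal rank is $\leqslant\lambda_2$; subtracting the analogous count at rank $\lambda_2-1$ leaves exactly the chains whose minimal element has rank $\lambda_2$. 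This is the bookkeeping already recorded in Section~\ref{section.two variables} in the sentence ``$a^\lambda_{(w),(h)}$ is the number of symmetric chains that begin at rank $\lambda_2$''.

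For the second step, Theorem~\ref{theorem.w=4}(2) states that the minimal elements of chains in $\mathscr C$ are exactly the partitions $\mu = 4^{m_4}2^{m_2}1^{m_1}\in L(4,h)$ with $m_2\in 2\ZZ$, $m_1\geqslant 2m_4$ and $m_1\neq 2m_4+1$. Translating $\mu\in L(4,h)$ into $\ell(\mu)=m_1+m_2+m_4\leqslant h$ (the width bound $\mu_1\leqslant 4$ being automatic) and the rank constraint into $\rank(\mu)=|\mu|=m_1+2m_2+4m_4=\lambda_2$, the set of minimal elements of rank $\lambda_2$ is precisely $\{\mu\vdash\lambda_2 \mid \mu=4^{m_4}2^{m_2}1^{m_1},\ m_1+m_2+m_4\leqslant h,\ m_2\in 2\ZZ,\ m_1\geqslant 2m_4,\ m_1\neq 2m_4+1\}$, which is the claimed cardinality.

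The real obstacle is not in this corollary but upstream, in Theorem~\ref{theorem.w=4}: one must construct the $w=4$ analogue of the raising and lowering operators of Proposition~\ref{proposition.L'}, verify that they are partial inverses on $L(4,h)$ (presumably after an analogous stratification peeling off copies of a fixed ``core'' subpartition, as in~\eqref{equation.L decomp}), check the restriction and pattern conditions, and compute the explicit highest- and lowest-weight shapes. Granting Theorem~\ref{theorem.w=4}, the present corollary follows immediately from the two displayed steps above, exactly as Corollary~\ref{cor.b w=3} follows from Theorem~\ref{theorem.w=3}(2).
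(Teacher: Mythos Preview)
Your proposal is correct and follows exactly the approach the paper uses (implicitly, since the paper gives no separate proof for this corollary): it is the $w=4$ analogue of the proof of Corollary~\ref{cor.b w=3}, invoking~\eqref{equation.b L} to interpret $a^\lambda_{(4),(h)}$ as the number of minimal elements of rank $\lambda_2$ in the symmetric chain decomposition, and then reading off those minimal elements from Theorem~\ref{theorem.w=4}(2). Your added detail about why the rank-count difference equals the number of chain minima at rank $\lambda_2$, and your translation of the conditions on $\mu$, are just elaborations of the same argument.
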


\begin{figure}[htpb]
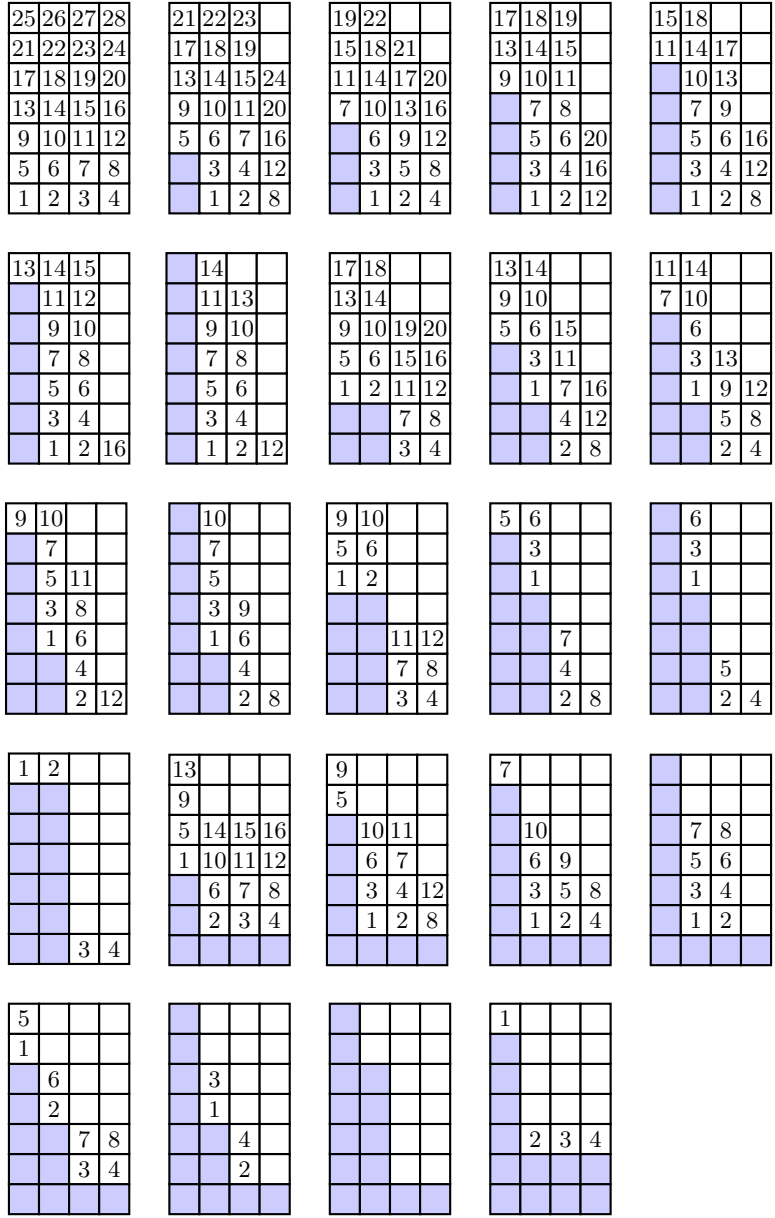

    \centering
    \begin{equation*}

    \end{equation*}
    \caption{The symmetric chain decomposition for $L(4, 7)$ satisfying the restriction,
extension and pattern condition.  The chains are organized so that the elements
of $L''(4,h)$ are all listed first, followed by the elements of
$L'(4,h) \backslash L''(4,h)$, followed by the elements of $L(4,h) \backslash L'(4,h)$.}
    \label{fig:SCD-w4-C0C4C8}
\end{figure}

As in the case of $L(3,h)$, we decompose $L(4,h)$ into smaller components.
Let $L'(4,h)$ be the set of partitions $\lambda= 4^{m_4} 3^{m_3} 2^{m_2} 1^{m_1}$ in $L(4,h)$ such that either $m_4=0$ or $m_1\leqslant 1$.
That is, $L'(4,h)$ is the set of partitions which do not contain $(4,1,1)$ as a subpartition. We have
\begin{equation}
\label{equation.L4 decomp}
	L(4,h) = L'(4,h) \uplus \{ \lambda\oplus(4,1,1) \mid  \lambda \in L(4,h-3)\}.
\end{equation}
Thus, if $L'(4,h)$ has a symmetric chain decomposition, so does $L(4,h)$. We decompose $L'(4,h)$ further.
Let $L''(4,h)$ be the set of partitions $\lambda= 4^{m_4} 3^{m_3} 2^{m_2} 1^{m_1}$ in $L'(4,h)$ such that $m_2 \leqslant 1$.
That is, $L''(4,h)$ is the set of partitions which contain neither $(4,1,1)$ nor $(2,2)$ as a subpartition. We have
\begin{equation}
\label{equation.L4 decomp a}
	L'(4,h) = L''(4,h) \uplus \{ \lambda\oplus(2,2) \mid  \lambda \in L'(4,h-2)\}.
\end{equation}
As in the $w=3$ case, we can proceed by induction since the chains of
$L'(4,h)$, $\{ \lambda\oplus(4,1,1) \mid \lambda \in L(4,h-3)\}$, and  $\{ \lambda\oplus(2,2) \mid  \lambda \in L'(4,h-2)\}$
are all centered around $2h = \frac{4h}{2} = \frac{4(h-3)}{2} +6 = \frac{4(h-2)}{2}+4$.
Hence the proof of Theorem~\ref{theorem.w=4} follows from the following proposition.

\begin{proposition}
There is a symmetric chain decomposition $\mathscr C$ of $L''(4,h)$ satisfying:
\begin{enumerate}
\item $\mathscr C$ satisfies the restriction and pattern condition.
\item The minimal element of a chain $C$ in $\mathscr C$ is of the form $1^{m_1}$ with $m_1 \neq 1$.
\item  If the minimal element of a chain $C$ in $\mathscr C$ is $1^{m_1}$, then the corresponding maximal element in $C$ is
\begin{enumerate}
\item $4^{h-m_1}3^{m_1}$ if $m_1$ is even;
\item $4^{h-m_1+1}3^{m_1-2}2$ if $m_1$ is odd.
\end{enumerate}
\end{enumerate}
\end{proposition}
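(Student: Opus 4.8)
The plan is to follow the proof of Proposition~\ref{proposition.L'} and construct the decomposition $\mathscr C$ by means of two partial maps
\[
    f \colon L''(4,h)\setminus \mathsf{LW} \longrightarrow \{1,2,3,4\}, \qquad
    e \colon L''(4,h)\setminus \mathsf{HW} \longrightarrow \{1,2,3,4\},
\]
where $\mathsf{HW}=\{\,1^{m_1}\mid m_1\ne 1,\ m_1\le h\,\}$ and $\mathsf{LW}$ consists of the partitions described in item~(3), with the rule that $\lambda^{(c)}\lessdot\lambda^{(c+1)}$ in $\mathscr C$ exactly when $f(\lambda^{(c)})=e(\lambda^{(c+1)})$ equals the column index of the cell $\lambda^{(c+1)}\setminus\lambda^{(c)}$; we view $f$ and $e$ as $\mathfrak{sl}_2$ lowering and raising operators. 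Once $f$ and $e$ are shown to be mutually inverse partial maps, the $f$-orbits partition $L''(4,h)$ into saturated chains, and it remains only to check rank-symmetry and the restriction and pattern conditions. Rank-symmetry is immediate from items~(2) and~(3): a chain with minimal element $1^{m_1}$ (of rank $m_1$) has maximal element of rank $4h-m_1$, so it is centered at $2h$, the center of the graded poset $L''(4,h)$; and the pattern condition implies the extension condition as explained in Section~\ref{section.two variables}.

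Concretely, it is cleanest to record the intended chains first, viewing $\lambda=4^{m_4}3^{m_3}2^{m_2}1^{m_1}$ through its vector of column heights $(\lambda^t_1,\dots,\lambda^t_4)$, so that adding a cell in column $j$ increments the $j$-th entry. For a chain whose highest weight is $1^{m_1}$ with $m_1$ even, the edge labelling is
\[
    \bigl(\underbrace{2,3,2,3,\dots,2,3}_{2m_1},\ \underbrace{1,2,3,4,\ \dots,\ 1,2,3,4}_{4(h-m_1)}\bigr):
\]
a length-$2m_1$ zig-zag between columns $2$ and $3$ carrying $1^{m_1}$ to $3^{m_1}$, followed by $h-m_1$ repetitions of the permutation $(1,2,3,4)$, which ends at $4^{h-m_1}3^{m_1}$. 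For $m_1$ odd, the edge labelling is
\[
    \bigl(\underbrace{2,3,\dots,2,3}_{2(m_1-1)},\ 2,\ 4,\ \underbrace{1,4,3,2,\ \dots,\ 1,4,3,2}_{4(h-m_1)}\bigr):
\]
a zig-zag together with a single column-$4$ insertion (legal since column $3$ then has height $\ge 1$) carrying $1^{m_1}$ to $4\,3^{m_1-2}2$, followed by $h-m_1$ repetitions of $(1,4,3,2)$, which ends at $4^{h-m_1+1}3^{m_1-2}2$. One checks step by step that every intermediate shape lies in $L''(4,h)$, namely that $\lambda^t_2-\lambda^t_3\le 1$ throughout (no $(2,2)$) and that $\lambda^t_1-\lambda^t_2\le 1$ as soon as $\lambda^t_4\ge 1$ (no $(4,1,1)$); this is exactly what forces the zig-zag prefixes and the particular column orders $(1,2,3,4)$ and $(1,4,3,2)$ inside the repeating blocks. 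The operators $f$ and $e$ are then defined by the corresponding case analysis on $(\lambda^t_1,\dots,\lambda^t_4)$: a case for the zig-zag part of a prefix, a case for the lone column-$4$ step of an odd prefix, and a case for each position inside a repeating block, where the parity that decides between $(1,2,3,4)$ and $(1,4,3,2)$ is recovered from the column heights (e.g.\ as the parity of $\lambda^t_1-\lambda^t_4$ at a block boundary).

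The main obstacle will be the verification that $f$ and $e$ are well defined and mutually inverse, i.e.\ that the column-height data of an arbitrary $\lambda\in L''(4,h)$ determines unambiguously which chain contains $\lambda$ and its position there, and that $f\circ e$ and $e\circ f$ act as the identity wherever defined. Compared with the $w=3$ case this analysis is heavier: there is an extra column, and the extra forbidden subpattern $(2,2)$ forces the zig-zag prefix, so the ``phase'' bookkeeping must track the alternation between columns $2$ and $3$ and the single column-$4$ insertion in addition to the position within a repeating block, and one must confirm that no intermediate partition escapes $L''(4,h)$. Once this is established, the restriction condition holds by construction — restricting a chain to the $4\times(h-1)$ box deletes exactly the elements of column-height $h$, which are the cells of the last repeating block following its column-$1$ step, leaving a chain of the same shape for $L''(4,h-1)$ — and the pattern condition holds by construction from the explicit edge labellings above. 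Together with the decompositions~\eqref{equation.L4 decomp} and~\eqref{equation.L4 decomp a}, this completes the proof of Theorem~\ref{theorem.w=4}.
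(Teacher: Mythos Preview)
Your overall framework is right, and your even chains agree exactly with the paper's. The problem is in the odd case: the chains you describe do not partition $L''(4,h)$.

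Take $m_1$ odd. Your odd prefix $(2,3)^{m_1-1},2,4$ passes through the partitions $3^{m_1-1}1$ and $3^{m_1-1}2$ (column heights $(m_1,m_1-1,m_1-1,0)$ and $(m_1,m_1,m_1-1,0)$). But these same two partitions already lie on your \emph{even} chain for highest weight $1^{m_1-1}$: that chain's prefix $(2,3)^{m_1-1}$ ends at $3^{m_1-1}$, and the first two steps of its repeating block $(1,2,3,4)$ produce exactly $3^{m_1-1}1$ and $3^{m_1-1}2$. So the two chains overlap. Correspondingly, partitions such as $4\,3^{m_1-3}21$ and $4\,3^{m_1-2}1$ (which are in $L''(4,h)$) are never reached by any of your chains. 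For a concrete instance, take $h=3$ and $m_1=3$: your odd chain is
\[
1^3,\ 21^2,\ 31^2,\ 321,\ 3^21,\ 3^22,\ 4\,32,
\]
while your even chain for $m_1'=2$ is
\[
1^2,\ 21,\ 31,\ 32,\ 3^2,\ 3^21,\ 3^22,\ 3^3,\ 4\,3^2,
\]
so $3^21$ and $3^22$ are double-covered, and $4\,21$, $4\,31\in L''(4,3)$ are missed.

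The fix is to leave the zig-zag one step earlier. In the paper's construction the odd chain transitions to ``phase~2'' already at $3^{m_1-2}21$ (reached after $(2,3)^{m_1-2},2$), and the next step is $+4$ to $4\,3^{m_1-3}21$, then $+3$, $+2$, after which the repeating permutation block begins. The resulting edge labelling is
\[
\bigl((2,3)^{m_1-2},\,2,\,4,\,3,\,2,\ (1,4,3,2)^{\,h-m_1}\bigr),
\]
with the same repeating block you proposed but a shorter zig-zag and three extra non-$1$ steps in the prefix. This is precisely what keeps the odd chain disjoint from the even $m_1-1$ chain: the partition $3^{m_1-2}21$ has $m_1=m_2=1$, a configuration that never occurs on any even chain, so the odd chain veers off before any collision. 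Your case analysis for $f$ and $e$ would need to incorporate this earlier branch point; the rest of your outline (rank-symmetry from (2)--(3), restriction from the block structure) then goes through.
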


\begin{proof}
The proof proceeds in the same fashion as the proof of Proposition~\ref{proposition.L'}. Again, we denote by $\mathsf{HW}$ the minimal
(or highest weight) elements and by $\mathsf{LW}$ the maximal (or lowest weight) elements in the chains.

We define two functions,
\[
	f \colon L''(4,h) \backslash \mathsf{LW} \rightarrow \{1,2,3,4\} \quad \text{and} \quad
	e \colon L''(4,h) \backslash \mathsf{HW} \rightarrow \{1,2,3,4\}
\]
which, if $\lambda^{(c)} \lessdot \lambda^{(c+1)}$ in the symmetric chain decomposition,
then $f(\lambda^{(c)})= e(\lambda^{(c+1)})$ is equal to the column index of the cell $\lambda^{(c+1)}\backslash\lambda^{(c)}$.

Let $\lambda = 4^{m_4} 3^{m_3} 2^{m_2} 1^{m_1} \in L''(4,h)$ and set $d = m_1+m_2+m_3$.
We say that $\lambda$ is in \defn{phase~1} if $m_4=0$, $d$ is even and
$m_1+m_2 \geqslant 1$, or $d$ is odd and $m_1 > 1$.
Otherwise $\lambda$ is in \defn{phase~2}.
All highest weight elements except the empty partition are in phase 1.
If the highest weight element is $1^{m_1}$ with $m_1 \neq 1$,
the corresponding lowest weight is $4^{h-m_1}3^{m_1}$ if $m_1$ is even
and $4^{h-m_1+1}3^{m_1-2}2$ if $m_1$ is odd.

Every partition $\lambda = 4^{m_4}3^{m_3}2^{m_2}1^{m_1}
\in L''(4,h)$ is in one of the following 10 cases.
The partition $\lambda$ is in phase 1 and
\begin{enumerate}
\item $\lambda$ has even length. In this case,
\begin{enumerate}
\item if $m_2=0$, then $f(\lambda) = 2$ and $e(\lambda)=3$ (unless $m_3=0$, in which case $\lambda \in \mathsf{HW}$).
\item if $m_2=1$, then $f(\lambda) = 3$ and $e(\lambda) = 2$.
If $m_1=0$ then the next partition is in phase 2.
\end{enumerate}
The highest weight element is $1^{\ell(\lambda)}$ and the lowest weight element is $4^{h-\ell(\lambda)} 3^{\ell(\lambda)}$.
\item $\lambda$ has odd length. In this case,
\begin{enumerate}
\item if $m_2=0$, then $f(\lambda)=2$ and $e(\lambda)=3$ (unless $m_3=0$, in which case $\lambda\in \mathsf{HW}$).
If $m_1=2$ then the next partition is in phase 2.
\item if $m_2=1$, then $f(\lambda)=3$ and $e(\lambda) = 2$.
\end{enumerate}
The highest weight element is $1^{\ell(\lambda)}$ and the lowest weight element is $4^{h-\ell(\lambda)+1} 3^{\ell(\lambda)-2}2$.
\end{enumerate}
Otherwise $\lambda$ is in phase 2 and
\begin{enumerate}
\item[(3)] $m_1=0$, $m_2=0$ and $m_3$ is even. Then
$f(\lambda)=1$ (unless $\ell(\lambda)=h$ in which case $\lambda \in \mathsf{LW}$) and $e(\lambda)=4$ if $m_4>0$
(if $m_4=0$ then $e(\lambda)=3$ and the previous element is in phase 1).
The highest weight of this chain is $1^{m_3}$ and the corresponding lowest weight element is $4^{h-m_3}3^{m_3}$.
\item[(4)] $m_1=1$, $m_2=0$ and $m_3$ is even. Then
$f(\lambda)=2$ and $e(\lambda)=1$.
The highest weight of this chain is $1^{m_3}$ and the corresponding lowest weight element is $4^{h-m_3}3^{m_3}$.
\item[(5)] $m_1=0$, $m_2=1$ and $m_3$ is even. Then
$f(\lambda)=3$ and $e(\lambda)=2$.
The highest weight of this chain is $1^{m_3}$ and the corresponding lowest weight element is $4^{h-m_3}3^{m_3}$.
\item[(6)] $m_1=0$, $m_2=0$ and $m_3$ is odd. Then
$f(\lambda)=4$ and $e(\lambda)=3$.
The highest weight of this chain is $1^{m_3-1}$ and the corresponding lowest weight element is $4^{h-m_3+1}3^{m_3-1}$.
\item[(7)] $m_1=1$, $m_2=1$ and $m_3$ is odd. Then
$f(\lambda)=4$.  In this case, $e(\lambda)=1$ if $m_4>0$ (if $m_4=0$ then $e(\lambda)=2$ and the previous element
is in phase 1).
The highest weight element of this chain is $1^{m_3+2}$ and the corresponding lowest weight element is
$4^{h-m_3-1} 3^{m_3} 2$.
\item[(8)] $m_1=1$, $m_2=1$ and $m_3$ is even. Then
$f(\lambda)=3$ and $e(\lambda)=4$.
The highest weight element of this chain is $1^{m_3+3}$ and the corresponding lowest weight element is
$4^{h-m_3-2} 3^{m_3+1} 2$.
\item[(9)] $m_1=1$, $m_2=0$ and $m_3$ is odd. Then
$f(\lambda)=2$ and $e(\lambda)=3$.
The highest weight element of this chain is $1^{m_3+2}$ and the corresponding lowest weight element is
$4^{h-m_3-1} 3^{m_3} 2$.
\item[(10)] $m_1=0$, $m_2=1$ and $m_3$ is odd. If $m_2+m_3+m_4= h$, then $\lambda$ is of $\mathsf{LW}$.
Otherwise, $f(\lambda)=1$ and $e(\lambda)=2$.
The highest weight element of this chain is $1^{m_3+2}$ and the corresponding lowest weight element is
$4^{h-m_3-1} 3^{m_3} 2$.
\end{enumerate}
One can check explicitly, that $f(e(\lambda))=\lambda$ if $e(\lambda)$ is defined and $e(f(\lambda))=\lambda$ if $f(\lambda)$ is defined. Hence they
are partial inverses of each other and indeed define a symmetric chain decomposition.
If the highest weight element is $1^{m_1}$ with $m_1$ even (resp. odd), then eventually the chain defined by the operator $f$ will oscillate between
cases $(3)\to (4) \to (5) \to (6) \to (3) \to \cdots$ (resp. $(7) \to (8) \to (9) \to (10) \to (7) \to \cdots$) with pattern 1234 (resp. 4321). Hence the chains satisfy the
pattern condition. The restriction condition follows by construction.
\end{proof}

\begin{example}
See Figure~\ref{fig:SCD-w4-C0C4C8} for the symmetric chain decomposition for $L(4,7)$.
\end{example}

The decomposition of $L(4,h)$ appearing in Equations~\eqref{equation.L4 decomp} and~\eqref{equation.L4 decomp a}
implies a new recursive formula (see Corollary~\ref{corollary.L4} below) analogous to a formula by T\'etreault~\cite{Tetreault2020} which 
is related to the decomposition of $L(3,h)$. T\'etreault's formula~\cite[Theorem~2.1]{Tetreault2020} states that for all $h \geqslant 1$
\[
	s_3[s_h]\!\!\downarrow_2
	= s_{(6,6)} \odot (s_3[s_{h-4}]\!\!\downarrow_2)
	+ s_{3h} + \sum_{k=2}^h s_{(3h-k,k)}~.
\]
Here $s_\mu \!\!\downarrow_k=s_\mu$ if $\ell(\mu)\leqslant k$ and 0 otherwise. Furthermore, $s_\lambda \odot s_\mu = s_{\lambda+\mu}$,
where the sum of two partitions is done componentwise adjoining parts of size zero when necessary.
This recursive formula follows from Equation~\eqref{equation.L decomp} and Proposition~\ref{proposition.L'} since
\[
\begin{split}
	s_3[s_h]\!\!\downarrow_2 &= \sum_{\lambda \in L(3,h)} s_{(3k-|\lambda|,|\lambda|)},\\
	s_{(6,6)} \odot (s_3[s_{h-4}]\!\!\downarrow_2) &= \sum_{\lambda \in L'(3,h)} s_{(3k-|\lambda|,|\lambda|)} \hbox{ and }\\
	s_{3h} + \sum_{k=2}^h s_{(3h-k,k)} &= \sum_{\lambda \in L(3,h) \backslash L'(3,h)} s_{(3k-|\lambda|,|\lambda|)}~.
\end{split}
\]

T\'etreault states in the last line of \cite{Tetreault2020} that
``$\ldots$ even in the case $h_4[h_n]$, such a recurrence
is hard to find.''
Here we observe that the decompositions~\eqref{equation.L4 decomp} and~\eqref{equation.L4 decomp a} for $L(4,h)$
imply such a recursion stated in the following corollary.

\begin{corollary}
\label{corollary.L4}
For a symmetric function
$f = \sum_{\lambda} c_\lambda s_\lambda$, define
$f\!\!\downarrow_2 = \sum_{\ell(\lambda) \leqslant 2} c_\lambda s_\lambda$.
Then for all $h\geqslant 1$
\begin{multline*}
	s_4[s_h]\!\!\downarrow_2 = s_{(6,6)} \odot (s_4[s_{h-3}]\!\!\downarrow_2)\\
	+ s_{(4,4)} \odot (s_4[s_{h-2}]\!\!\downarrow_2)
	- s_{(10,10)} \odot (s_4[s_{h-5}]\!\!\downarrow_2)
	+ s_{4h} + \sum_{k=2}^h s_{(4h-k,k)}~.
\end{multline*}
\end{corollary}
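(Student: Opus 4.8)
The plan is to evaluate $s_4[s_h]\!\!\downarrow_2$ by reading it off a symmetric chain decomposition of $L(4,h)$ and then feeding the box-decompositions \eqref{equation.L4 decomp} and \eqref{equation.L4 decomp a} through this correspondence. The starting point is the general identity: for any symmetric chain decomposition $\mathscr C$ of $L(w,h)$,
\[
    s_w[s_h]\!\!\downarrow_2 \;=\; \sum_{C \in \mathscr C} s_{(wh - r_C,\; r_C)},
\]
where $r_C$ is the rank of the minimal (highest weight) element of the chain $C$. This is immediate from \eqref{equation.b lambda} together with the reformulation of \eqref{equation.b L} recorded in the text, that $a^\lambda_{(w),(h)}$ equals the number of chains of $\mathscr C$ beginning at rank $\lambda_2$ (and $\lambda_1 = wh-\lambda_2$). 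The only further bookkeeping I will need is: if $\{\mu\oplus\rho : \mu\in P\}$ is a subposet of $L(4,h)$ isomorphic to $L(4,h')$ (resp.\ to $L'(4,h')$) with $|\rho| = 2(h-h')$, then adjoining $\rho$ shifts all ranks by $|\rho|$ and $s_{(4h-|\rho|-r,\; r+|\rho|)} = s_{(|\rho|,|\rho|)}\odot s_{(4h'-r,\; r)}$. This equality is exactly what pins down the constants: $(4,1,1)$ has size $6$ and sits above $L(4,h-3)$, $(2,2)$ has size $4$ and sits above $L'(4,h-2)$, and $s_{(4,4)}\odot s_{(6,6)} = s_{(10,10)}$.

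Specializing $w=4$, take $\mathscr C$ to be the symmetric chain decomposition of $L(4,h)$ constructed in Theorem~\ref{theorem.w=4}. By its inductive construction (combine \eqref{equation.L4 decomp} and \eqref{equation.L4 decomp a}, noting that all the constituent chains are centered at rank $2h$), $\mathscr C$ is the disjoint union of three families: the chains of the SCD of $L''(4,h)$; the chains of the SCD of $L'(4,h-2)$, each shifted by $\oplus(2,2)$; and the chains of the SCD of $L(4,h-3)$, each shifted by $\oplus(4,1,1)$. Splitting the sum in the displayed identity accordingly, the $L(4,h-3)$-family contributes $s_{(6,6)}\odot\bigl(\sum_C s_{(4(h-3)-r_C,\,r_C)}\bigr) = s_{(6,6)}\odot(s_4[s_{h-3}]\!\!\downarrow_2)$; the $L''(4,h)$-family contributes, using the proposition that describes the SCD of $L''(4,h)$ (its chains have minimal elements exactly the $1^{m_1}$ with $m_1\neq 1$, each once, and $1^{m_1}\in L''(4,h)$ iff $m_1\leqslant h$), the sum $\sum_{m_1\in\{0\}\cup\{2,\dots,h\}} s_{(4h-m_1,\,m_1)} = s_{4h} + \sum_{k=2}^{h} s_{(4h-k,k)}$; and the $L'(4,h-2)$-family contributes $s_{(4,4)}\odot\bigl(\sum_{C\in\mathscr C(L'(4,h-2))} s_{(4(h-2)-r_C,\,r_C)}\bigr)$. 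For this last family, $L'(4,h-2)$ is not itself a box, so I apply \eqref{equation.L4 decomp} once more at height $h-2$: the SCD of $L(4,h-2)$ is the disjoint union of the SCD of $L'(4,h-2)$ and the $\oplus(4,1,1)$-shift of the SCD of $L(4,h-5)$, whence $\sum_{C\in\mathscr C(L'(4,h-2))} s_{(4(h-2)-r_C,\,r_C)} = s_4[s_{h-2}]\!\!\downarrow_2 - s_{(6,6)}\odot(s_4[s_{h-5}]\!\!\downarrow_2)$, so this family contributes $s_{(4,4)}\odot(s_4[s_{h-2}]\!\!\downarrow_2) - s_{(10,10)}\odot(s_4[s_{h-5}]\!\!\downarrow_2)$. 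Adding the three contributions gives the stated recursion.

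Lastly I would handle small $h$: adopting the conventions that $L(4,h')$ is empty and $s_4[s_{h'}] = 0$ for $h' < 0$ keeps \eqref{equation.L4 decomp}, \eqref{equation.L4 decomp a} and the SCD assembly valid for $h'\geqslant 0$, the sum $\sum_{k=2}^{h}$ is empty for $h\leqslant 1$, and one checks $h=1$ and $h=2$ by hand (for instance $s_4[s_1]\!\!\downarrow_2 = s_4$ and $s_4[s_2]\!\!\downarrow_2 = s_8 + s_{(6,2)} + s_{(4,4)}$, each matching the right-hand side). I expect no real obstacle: the substantive work — the two box-decompositions and, via the proposition on $L''(4,h)$, the explicit minimal chain-elements — is already in place, and the only delicate points are (a) matching each rank shift exactly to an $\odot$-multiplication, which is what forces the coefficients $6$, $4$, $10$, and (b) the extra pass through \eqref{equation.L4 decomp} at height $h-2$ that turns the $L'(4,h-2)$-sum back into a genuine plethysm and thereby produces the subtracted term $s_{(10,10)}\odot(s_4[s_{h-5}]\!\!\downarrow_2)$.
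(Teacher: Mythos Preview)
Your proposal is correct and follows essentially the same route as the paper's proof: both split $L(4,h)$ via \eqref{equation.L4 decomp} and \eqref{equation.L4 decomp a} into the three pieces $L(4,h)\setminus L'(4,h)$, $L'(4,h)\setminus L''(4,h)$, and $L''(4,h)$, identify the rank shift with an $\odot$-multiplication, and handle $L'(4,h-2)$ by applying \eqref{equation.L4 decomp} once more at height $h-2$ to produce the subtracted $s_{(10,10)}$ term. The only cosmetic difference is packaging: the paper defines ``$f$ enumerates $A$'' to mean $f=\sum_{\lambda\in A} s_{(4h-|\lambda|,|\lambda|)}$ (with the implicit Schur straightening), whereas you work directly with the minimal elements of the chains via $\sum_{C} s_{(4h-r_C,r_C)}$; these agree because each rank-symmetric chain contributes exactly one surviving Schur term after straightening.
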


\begin{proof}
We say that a symmetric function $f$ enumerates a set of partitions $A \subseteq L(4,h)$ if
$f = \sum_{\lambda \in A} s_{(4h - |\lambda|,|\lambda|)}$.
For example, $s_4[s_{h}]\!\!\downarrow_2$ enumerates the set of partitions $L(4,h)$.

The partitions in $L(4,h)$ may be divided into three sets (as in the example in Figure \ref{fig:SCD-w4-C0C4C8}):
\begin{itemize}
\item Those in $L(4,h) \backslash L'(4,h)$ which contain at least one copy of
$(4,1,1)$.  These partitions are enumerated by
$s_{(6,6)} \odot s_4[s_{h-3}]\!\!\downarrow_2$.
\item Those in $L'(4,h) \backslash L''(4,h)$ which contain at least one copy of
$(2,2)$ as a subpartition, but no copies of $(4,1,1)$.  These partitions are enumerated
by $s_{(4,4)} \odot (s_4[s_{h-2}]\!\!\downarrow_2 - s_{(6,6)} \odot s_4[s_{h-5}]\!\!\downarrow_2)$
\item Those in $L''(4,h)$ which contain neither $(4,1,1)$ nor $(2,2)$ as a subpartition.
These partitions are enumerated by $s_{4h} + \sum_{k=2}^h s_{(4h-k,k)}$.
\end{itemize}
The sum of these three expressions is the right hand side of the expression stated in the
corollary.
\end{proof}

\bibliographystyle{plain}
\bibliography{quasi_symmetric}

\begin{thebibliography}{10}

\bibitem{AS.2017}
Sami Assaf and Dominic Searles.
\newblock Schubert polynomials, slide polynomials, {S}tanley symmetric
  functions and quasi-{Y}amanouchi pipe dreams.
\newblock {\em Adv. Math.}, 306:89--122, 2017.

\bibitem{AS.2018}
Sami Assaf and Dominic Searles.
\newblock Kohnert tableaux and a lifting of quasi-{S}chur functions.
\newblock {\em J. Combin. Theory Ser. A}, 156:85--118, 2018.

\bibitem{COSSZ.2022}
Laura Colmenarejo, Rosa Orellana, Franco Saliola, Anne Schilling, and Mike
  Zabrocki.
\newblock The mystery of plethysm coefficients.
\newblock Proceedings of Symposia in Pure Mathematics, to appear
  (\arxiv{2208.07258}), 2022.

\bibitem{ELW.2010}
Eric Egge, Nicholas~A. Loehr, and Gregory~S. Warrington.
\newblock From quasisymmetric expansions to {S}chur expansions via a modified
  inverse {K}ostka matrix.
\newblock {\em European J. Combin.}, 31(8):2014--2027, 2010.

\bibitem{GR.2018}
Adriano Garsia and Jeff Remmel.
\newblock A note on passing from a quasi-symmetric function expansion to a
  {S}chur function expansion of a symmetric function.
\newblock preprint, \arxiv{1802.09686}, 2018.

\bibitem{Gessel.1984}
Ira~M. Gessel.
\newblock Multipartite {$P$}-partitions and inner products of skew {S}chur
  functions.
\newblock In {\em Combinatorics and algebra ({B}oulder, {C}olo., 1983)},
  volume~34 of {\em Contemp. Math.}, pages 289--317. Amer. Math. Soc.,
  Providence, RI, 1984.

\bibitem{Gessel.2019}
Ira~M. Gessel.
\newblock On the {S}chur function expansion of a symmetric quasi-symmetric
  function.
\newblock {\em Electron. J. Combin.}, 26(4):Paper No. 4.50, 5, 2019.

\bibitem{GR.1993}
Ira~M. Gessel and Christophe Reutenauer.
\newblock Counting permutations with given cycle structure and descent set.
\newblock {\em J. Combin. Theory Ser. A}, 64(2):189--215, 1993.

\bibitem{HHL.2005}
J.~Haglund, M.~Haiman, and N.~Loehr.
\newblock A combinatorial formula for {M}acdonald polynomials.
\newblock {\em J. Amer. Math. Soc.}, 18(3):735--761, 2005.

\bibitem{I.2011}
Kazuto Iijima.
\newblock The first term of plethysms.
\newblock preprint, \arxiv{1108.4915}, 2011.

\bibitem{Lindstrom1980}
Bernt Lindstr\"{o}m.
\newblock A partition of {$L(3,\,n)$} into saturated symmetric chains.
\newblock {\em European J. Combin.}, 1(1):61--63, 1980.

\bibitem{LW.2012}
Nicholas~A. Loehr and Gregory~S. Warrington.
\newblock Quasisymmetric expansions of {S}chur-function plethysms.
\newblock {\em Proc. Amer. Math. Soc.}, 140(4):1159--1171, 2012.

\bibitem{MG.2023}
Florence Maas-Gari\'epy.
\newblock Quasicrystal structure of fundamental quasisymmetric functions, and
  skeleton of crystals.
\newblock preprint, \arxiv{2302.07694}, 2023.

\bibitem{MPY.2022}
Cara Monical, Benjamin Pankow, and Alexander Yong.
\newblock Reduced word enumeration, complexity, and randomization.
\newblock {\em Electron. J. Combin.}, 29(2):Paper No. 2.46, 28, 2022.

\bibitem{OHara1990}
Kathleen~M. O'Hara.
\newblock Unimodality of {G}aussian coefficients: a constructive proof.
\newblock {\em J. Combin. Theory Ser. A}, 53(1):29--52, 1990.

\bibitem{PW.2019}
Rowena Paget and Mark Wildon.
\newblock Generalized {F}oulkes modules and maximal and minimal constituents of
  plethysms of {S}chur functions.
\newblock {\em Proc. Lond. Math. Soc. (3)}, 118(5):1153--1187, 2019.

\bibitem{PZ.2023}
Greta Panova and Chenchen Zhao.
\newblock The {N}ewton polytope of the {K}ronecker product.
\newblock preprint, \arxiv{2311.10276}, 2023.

\bibitem{Riess1978}
Werner Rie\ss.
\newblock Zwei {O}ptimierungsprobleme auf {O}rdnungen.
\newblock {\em Arbeitsber. Inst. Math. Masch. Datenverarb. (Inform.)}, 11(5):59
  pp. (foldout), 1978.

\bibitem{Stanley1980}
Richard~P. Stanley.
\newblock Weyl groups, the hard {L}efschetz theorem, and the {S}perner
  property.
\newblock {\em SIAM J. Algebraic Discrete Methods}, 1(2):168--184, 1980.

\bibitem{Stanley}
Richard~P. Stanley.
\newblock {\em Enumerative combinatorics. {V}ol. 2}, volume 208 of {\em
  Cambridge Studies in Advanced Mathematics}.
\newblock Cambridge University Press, Cambridge, [2024] \copyright 2024.
\newblock Second edition [of 1676282], With an appendix by Sergey Fomin.

\bibitem{Tetreault2020}
\'E. T\'etreault.
\newblock Recurrence relation for plethysm.
\newblock preprint, \arxiv{2008.13070}, 2020.

\bibitem{Wang.2019}
George Wang.
\newblock A cornucopia of quasi-{Y}amanouchi tableaux.
\newblock {\em Electron. J. Combin.}, 26(1):Paper No. 1.10, 21, 2019.

\bibitem{Wen2004}
Xiangdong Wen.
\newblock Computer-generated symmetric chain decompositions for {$L(4,n)$} and
  {$L(3,n)$}.
\newblock {\em Adv. in Appl. Math.}, 33(2):409--412, 2004.

\bibitem{Wen.2024}
Xiangdong Wen.
\newblock A symmetric chain decomposition of {$L(5,n)$}.
\newblock {\em Enumer. Comb. Appl.}, 4(1):Paper No. S2R5, 8, 2024.

\bibitem{West1980}
Douglas~B. West.
\newblock A symmetric chain decomposition of {$L(4,\,n)$}.
\newblock {\em European J. Combin.}, 1(4):379--383, 1980.

\end{thebibliography}

\newpage

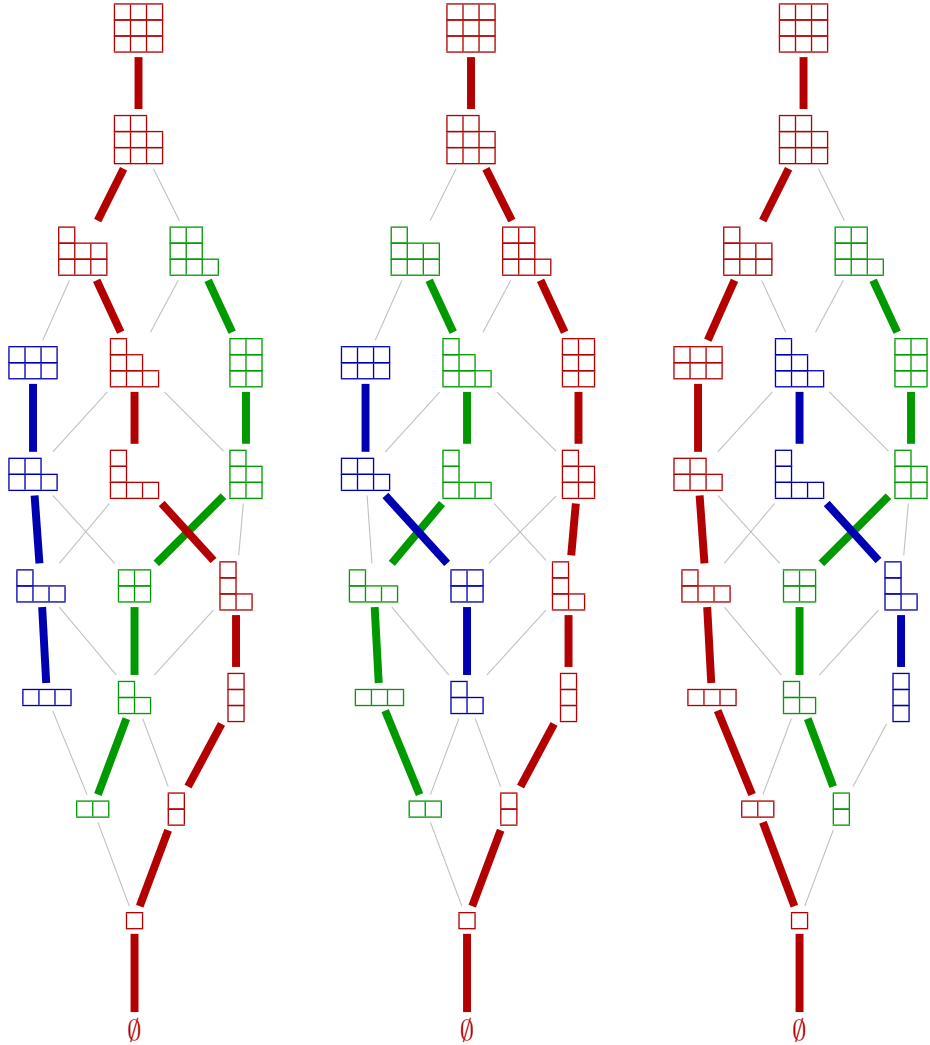
\begin{figure}[htpb]
    \centering
    \Yboxdim{0.6em}
    \Ylinethick{0.5pt}
    \begin{tikzpicture}[>=latex,line join=bevel, yscale=0.8, xscale=0.75, every node/.style={inner sep=2pt}]
      \node (node_0) at (65.39bp,7.3332bp) [draw,draw=none] {${\color{darkred}\emptyset}$};
      \node (node_1) at (65.39bp,58.968bp) [draw,draw=none] {${\color{darkred}\yng(1)}$};
      \node (node_2) at (44.39bp,111.57bp) [draw,draw=none] {${\color{darkgreen}\yng(2)}$};
      \node (node_4) at (86.39bp,111.57bp) [draw,draw=none] {${\color{darkred}\yng(1,1)}$};
      \node (node_3) at (21.39bp,164.18bp) [draw,draw=none] {${\color{darkblue}\yng(3)}$};
      \node (node_5) at (65.39bp,164.18bp) [draw,draw=none] {${\color{darkgreen}\yng(1,2)}$};
      \node (node_6) at (18.39bp,216.78bp) [draw,draw=none] {${\color{darkblue}\yng(1,3)}$};
      \node (node_10) at (116.39bp,164.18bp) [draw,draw=none] {${\color{darkred}\yng(1,1,1)}$};
      \node (node_7) at (65.39bp,216.78bp) [draw,draw=none] {${\color{darkgreen}\yng(2,2)}$};
      \node (node_11) at (116.39bp,216.78bp) [draw,draw=none] {${\color{darkred}\yng(1,1,2)}$};
      \node (node_8) at (14.39bp,269.38bp) [draw,draw=none] {${\color{darkblue}\yng(2,3)}$};
      \node (node_12) at (65.39bp,269.38bp) [draw,draw=none] {${\color{darkred}\yng(1,1,3)}$};
      \node (node_13) at (121.39bp,269.38bp) [draw,draw=none] {${\color{darkgreen}\yng(1,2,2)}$};
      \node (node_9) at (14.39bp,321.99bp) [draw,draw=none] {${\color{darkblue}\yng(3,3)}$};
      \node (node_14) at (65.39bp,321.99bp) [draw,draw=none] {${\color{darkred}\yng(1,2,3)}$};
      \node (node_15) at (39.39bp,374.59bp) [draw,draw=none] {${\color{darkred}\yng(1,3,3)}$};
      \node (node_16) at (121.39bp,321.99bp) [draw,draw=none] {${\color{darkgreen}\yng(2,2,2)}$};
      \node (node_17) at (95.39bp,374.59bp) [draw,draw=none] {${\color{darkgreen}\yng(2,2,3)}$};
      \node (node_18) at (67.39bp,427.19bp) [draw,draw=none] {${\color{darkred}\yng(2,3,3)}$};
      \node (node_19) at (67.39bp,479.8bp) [draw,draw=none] {${\color{darkred}\yng(3,3,3)}$};
      \draw[darkred,line width=3] (node_0) to (node_1);
      \draw [black,color=Gray!50] (node_1) to (node_2);
      \draw[darkred,line width=3] (node_1) to (node_4);
      \draw [black,color=Gray!50] (node_2) to (node_3);
      \draw[darkgreen,line width=3] (node_2) to (node_5);
      \draw[darkblue,line width=3] (node_3) to (node_6);
      \draw [black,color=Gray!50] (node_4) to (node_5);
      \draw[darkred,line width=3] (node_4) to (node_10);
      \draw [black,color=Gray!50] (node_5) to (node_6);
      \draw[darkgreen,line width=3] (node_5) to (node_7);
      \draw [black,color=Gray!50] (node_5) to (node_11);
      \draw[darkblue,line width=3] (node_6) to (node_8);
      \draw [black,color=Gray!50] (node_6) to (node_12);
      \draw [black,color=Gray!50] (node_7) to (node_8);
      \draw[darkgreen,line width=3] (node_7) to (node_13);
      \draw[darkblue,line width=3] (node_8) to (node_9);
      \draw [black,color=Gray!50] (node_8) to (node_14);
      \draw [black,color=Gray!50] (node_9) to (node_15);
      \draw[darkred,line width=3] (node_10) to (node_11);
      \draw[darkred,line width=3] (node_11) to (node_12);
      \draw [black,color=Gray!50] (node_11) to (node_13);
      \draw[darkred,line width=3] (node_12) to (node_14);
      \draw [black,color=Gray!50] (node_13) to (node_14);
      \draw[darkgreen,line width=3] (node_13) to (node_16);
      \draw[darkred,line width=3] (node_14) to (node_15);
      \draw [black,color=Gray!50] (node_14) to (node_17);
      \draw[darkred,line width=3] (node_15) to (node_18);
      \draw[darkgreen,line width=3] (node_16) to (node_17);
      \draw [black,color=Gray!50] (node_17) to (node_18);
      \draw[darkred,line width=3] (node_18) to (node_19);
    \end{tikzpicture}
    \qquad
    \begin{tikzpicture}[>=latex,line join=bevel, yscale=0.8, xscale=0.75, every node/.style={inner sep=2pt}]
      \node (node_0) at (65.39bp,7.3332bp) [draw,draw=none] {${\color{darkred}\emptyset}$};
      \node (node_1) at (65.39bp,58.968bp) [draw,draw=none] {${\color{darkred}\yng(1)}$};
      \node (node_2) at (44.39bp,111.57bp) [draw,draw=none] {${\color{darkgreen}\yng(2)}$};
      \node (node_4) at (86.39bp,111.57bp) [draw,draw=none] {${\color{darkred}\yng(1,1)}$};
      \node (node_3) at (21.39bp,164.18bp) [draw,draw=none] {${\color{darkgreen}\yng(3)}$};
      \node (node_5) at (65.39bp,164.18bp) [draw,draw=none] {${\color{darkblue}\yng(1,2)}$};
      \node (node_6) at (18.39bp,216.78bp) [draw,draw=none] {${\color{darkgreen}\yng(1,3)}$};
      \node (node_10) at (116.39bp,164.18bp) [draw,draw=none] {${\color{darkred}\yng(1,1,1)}$};
      \node (node_7) at (65.39bp,216.78bp) [draw,draw=none] {${\color{darkblue}\yng(2,2)}$};
      \node (node_11) at (116.39bp,216.78bp) [draw,draw=none] {${\color{darkred}\yng(1,1,2)}$};
      \node (node_8) at (14.39bp,269.38bp) [draw,draw=none] {${\color{darkblue}\yng(2,3)}$};
      \node (node_12) at (65.39bp,269.38bp) [draw,draw=none] {${\color{darkgreen}\yng(1,1,3)}$};
      \node (node_13) at (121.39bp,269.38bp) [draw,draw=none] {${\color{darkred}\yng(1,2,2)}$};
      \node (node_9) at (14.39bp,321.99bp) [draw,draw=none] {${\color{darkblue}\yng(3,3)}$};
      \node (node_14) at (65.39bp,321.99bp) [draw,draw=none] {${\color{darkgreen}\yng(1,2,3)}$};
      \node (node_15) at (39.39bp,374.59bp) [draw,draw=none] {${\color{darkgreen}\yng(1,3,3)}$};
      \node (node_16) at (121.39bp,321.99bp) [draw,draw=none] {${\color{darkred}\yng(2,2,2)}$};
      \node (node_17) at (95.39bp,374.59bp) [draw,draw=none] {${\color{darkred}\yng(2,2,3)}$};
      \node (node_18) at (67.39bp,427.19bp) [draw,draw=none] {${\color{darkred}\yng(2,3,3)}$};
      \node (node_19) at (67.39bp,479.8bp) [draw,draw=none] {${\color{darkred}\yng(3,3,3)}$};
      \draw[darkred,line width=3] (node_0) to (node_1);
      \draw [black,color=Gray!50] (node_1) to (node_2);
      \draw[darkred,line width=3] (node_1) to (node_4);
      \draw[darkgreen,line width=3] (node_2) to (node_3);
      \draw [black,color=Gray!50] (node_2) to (node_5);
      \draw[darkgreen,line width=3] (node_3) to (node_6);
      \draw [black,color=Gray!50] (node_4) to (node_5);
      \draw[darkred,line width=3] (node_4) to (node_10);
      \draw [black,color=Gray!50] (node_5) to (node_6);
      \draw[darkblue,line width=3] (node_5) to (node_7);
      \draw [black,color=Gray!50] (node_5) to (node_11);
      \draw [black,color=Gray!50] (node_6) to (node_8);
      \draw[darkgreen,line width=3] (node_6) to (node_12);
      \draw[darkblue,line width=3] (node_7) to (node_8);
      \draw [black,color=Gray!50] (node_7) to (node_13);
      \draw[darkblue,line width=3] (node_8) to (node_9);
      \draw [black,color=Gray!50] (node_8) to (node_14);
      \draw [black,color=Gray!50] (node_9) to (node_15);
      \draw[darkred,line width=3] (node_10) to (node_11);
      \draw [black,color=Gray!50] (node_11) to (node_12);
      \draw[darkred,line width=3] (node_11) to (node_13);
      \draw[darkgreen,line width=3] (node_12) to (node_14);
      \draw [black,color=Gray!50] (node_13) to (node_14);
      \draw[darkred,line width=3] (node_13) to (node_16);
      \draw[darkgreen,line width=3] (node_14) to (node_15);
      \draw [black,color=Gray!50] (node_14) to (node_17);
      \draw [black,color=Gray!50] (node_15) to (node_18);
      \draw[darkred,line width=3] (node_16) to (node_17);
      \draw[darkred,line width=3] (node_17) to (node_18);
      \draw[darkred,line width=3] (node_18) to (node_19);
    \end{tikzpicture}
    \qquad
    \begin{tikzpicture}[>=latex,line join=bevel, yscale=0.8, xscale=0.75, every node/.style={inner sep=2pt}]
      \node (node_0) at (65.39bp,7.3332bp) [draw,draw=none] {${\color{darkred}\emptyset}$};
      \node (node_1) at (65.39bp,58.968bp) [draw,draw=none] {${\color{darkred}\yng(1)}$};
      \node (node_2) at (44.39bp,111.57bp) [draw,draw=none] {${\color{darkred}\yng(2)}$};
      \node (node_4) at (86.39bp,111.57bp) [draw,draw=none] {${\color{darkgreen}\yng(1,1)}$};
      \node (node_3) at (21.39bp,164.18bp) [draw,draw=none] {${\color{darkred}\yng(3)}$};
      \node (node_5) at (65.39bp,164.18bp) [draw,draw=none] {${\color{darkgreen}\yng(1,2)}$};
      \node (node_6) at (18.39bp,216.78bp) [draw,draw=none] {${\color{darkred}\yng(1,3)}$};
      \node (node_10) at (116.39bp,164.18bp) [draw,draw=none] {${\color{darkblue}\yng(1,1,1)}$};
      \node (node_7) at (65.39bp,216.78bp) [draw,draw=none] {${\color{darkgreen}\yng(2,2)}$};
      \node (node_11) at (116.39bp,216.78bp) [draw,draw=none] {${\color{darkblue}\yng(1,1,2)}$};
      \node (node_8) at (14.39bp,269.38bp) [draw,draw=none] {${\color{darkred}\yng(2,3)}$};
      \node (node_12) at (65.39bp,269.38bp) [draw,draw=none] {${\color{darkblue}\yng(1,1,3)}$};
      \node (node_13) at (121.39bp,269.38bp) [draw,draw=none] {${\color{darkgreen}\yng(1,2,2)}$};
      \node (node_9) at (14.39bp,321.99bp) [draw,draw=none] {${\color{darkred}\yng(3,3)}$};
      \node (node_14) at (65.39bp,321.99bp) [draw,draw=none] {${\color{darkblue}\yng(1,2,3)}$};
      \node (node_15) at (39.39bp,374.59bp) [draw,draw=none] {${\color{darkred}\yng(1,3,3)}$};
      \node (node_16) at (121.39bp,321.99bp) [draw,draw=none] {${\color{darkgreen}\yng(2,2,2)}$};
      \node (node_17) at (95.39bp,374.59bp) [draw,draw=none] {${\color{darkgreen}\yng(2,2,3)}$};
      \node (node_18) at (67.39bp,427.19bp) [draw,draw=none] {${\color{darkred}\yng(2,3,3)}$};
      \node (node_19) at (67.39bp,479.8bp) [draw,draw=none] {${\color{darkred}\yng(3,3,3)}$};
      \draw[darkred,line width=3] (node_0) to (node_1);
      \draw[darkred,line width=3] (node_1) to (node_2);
      \draw [black,color=Gray!50] (node_1) to (node_4);
      \draw[darkred,line width=3] (node_2) to (node_3);
      \draw [black,color=Gray!50] (node_2) to (node_5);
      \draw[darkred,line width=3] (node_3) to (node_6);
      \draw[darkgreen,line width=3] (node_4) to (node_5);
      \draw [black,color=Gray!50] (node_4) to (node_10);
      \draw [black,color=Gray!50] (node_5) to (node_6);
      \draw[darkgreen,line width=3] (node_5) to (node_7);
      \draw [black,color=Gray!50] (node_5) to (node_11);
      \draw[darkred,line width=3] (node_6) to (node_8);
      \draw [black,color=Gray!50] (node_6) to (node_12);
      \draw [black,color=Gray!50] (node_7) to (node_8);
      \draw[darkgreen,line width=3] (node_7) to (node_13);
      \draw[darkred,line width=3] (node_8) to (node_9);
      \draw [black,color=Gray!50] (node_8) to (node_14);
      \draw[darkred,line width=3] (node_9) to (node_15);
      \draw[darkblue,line width=3] (node_10) to (node_11);
      \draw[darkblue,line width=3] (node_11) to (node_12);
      \draw [black,color=Gray!50] (node_11) to (node_13);
      \draw[darkblue,line width=3] (node_12) to (node_14);
      \draw [black,color=Gray!50] (node_13) to (node_14);
      \draw[darkgreen,line width=3] (node_13) to (node_16);
      \draw [black,color=Gray!50] (node_14) to (node_15);
      \draw [black,color=Gray!50] (node_14) to (node_17);
      \draw[darkred,line width=3] (node_15) to (node_18);
      \draw[darkgreen,line width=3] (node_16) to (node_17);
      \draw [black,color=Gray!50] (node_17) to (node_18);
      \draw[darkred,line width=3] (node_18) to (node_19);
    \end{tikzpicture}
    \caption{Lindstrom's~\cite{Lindstrom1980}, Wen's~\cite{Wen2004}, and our SCD of $L(3, 3)$}
    \label{fig:SCDsL33}
\end{figure}

\begin{figure}[htpb]
    \centering
    \Yboxdim{0.6em}
    \Ylinethick{0.5pt}
    \begin{tikzpicture}[>=latex,line join=bevel, yscale=0.8, every node/.style={inner sep=3pt}]
      \node (node_0) at (106.39bp,7.3332bp) [draw,draw=none] {${\color{darkred}\emptyset}$};
      \node (node_1) at (106.39bp,58.968bp) [draw,draw=none] {${\color{darkred}\yng(1)}$};
      \node (node_2) at (85.39bp,111.57bp) [draw,draw=none] {${\color{darkred}\yng(2)}$};
      \node (node_5) at (127.39bp,111.57bp) [draw,draw=none] {${\color{RoyalBlue}\yng(1,1)}$};
      \node (node_3) at (59.39bp,164.18bp) [draw,draw=none] {${\color{darkred}\yng(3)}$};
      \node (node_6) at (106.39bp,164.18bp) [draw,draw=none] {${\color{RoyalBlue}\yng(1,2)}$};
      \node (node_4) at (16.39bp,216.78bp) [draw,draw=none] {${\color{darkred}\yng(4)}$};
      \node (node_7) at (59.39bp,216.78bp) [draw,draw=none] {${\color{RoyalBlue}\yng(1,3)}$};
      \node (node_8) at (14.39bp,269.38bp) [draw,draw=none] {${\color{darkred}\yng(1,4)}$};
      \node (node_15) at (157.39bp,164.18bp) [draw,draw=none] {${\color{ForestGreen}\yng(1,1,1)}$};
      \node (node_9) at (106.39bp,216.78bp) [draw,draw=none] {${\color{YellowOrange}\yng(2,2)}$};
      \node (node_16) at (157.39bp,216.78bp) [draw,draw=none] {${\color{ForestGreen}\yng(1,1,2)}$};
      \node (node_10) at (61.39bp,269.38bp) [draw,draw=none] {${\color{RoyalBlue}\yng(2,3)}$};
      \node (node_17) at (112.39bp,269.38bp) [draw,draw=none] {${\color{ForestGreen}\yng(1,1,3)}$};
      \node (node_11) at (14.39bp,321.99bp) [draw,draw=none] {${\color{darkred}\yng(2,4)}$};
      \node (node_18) at (112.39bp,321.99bp) [draw,draw=none] {${\color{Cerulean}\yng(1,1,4)}$};
      \node (node_19) at (168.39bp,269.38bp) [draw,draw=none] {${\color{YellowOrange}\yng(1,2,2)}$};
      \node (node_12) at (61.39bp,321.99bp) [draw,draw=none] {${\color{RoyalBlue}\yng(3,3)}$};
      \node (node_20) at (168.39bp,321.99bp) [draw,draw=none] {${\color{ForestGreen}\yng(1,2,3)}$};
      \node (node_13) at (61.39bp,374.59bp) [draw,draw=none] {${\color{darkred}\yng(3,4)}$};
      \node (node_21) at (112.39bp,374.59bp) [draw,draw=none] {${\color{ForestGreen}\yng(1,2,4)}$};
      \node (node_22) at (168.39bp,374.59bp) [draw,draw=none] {${\color{RoyalBlue}\yng(1,3,3)}$};
      \node (node_14) at (61.39bp,427.19bp) [draw,draw=none] {${\color{darkred}\yng(4,4)}$};
      \node (node_23) at (112.39bp,427.19bp) [draw,draw=none] {${\color{ForestGreen}\yng(1,3,4)}$};
      \node (node_24) at (112.39bp,479.8bp) [draw,draw=none] {${\color{darkred}\yng(1,4,4)}$};
      \node (node_25) at (224.39bp,321.99bp) [draw,draw=none] {${\color{YellowOrange}\yng(2,2,2)}$};
      \node (node_26) at (224.39bp,374.59bp) [draw,draw=none] {${\color{YellowOrange}\yng(2,2,3)}$};
      \node (node_27) at (168.39bp,427.19bp) [draw,draw=none] {${\color{YellowOrange}\yng(2,2,4)}$};
      \node (node_28) at (224.39bp,427.19bp) [draw,draw=none] {${\color{RoyalBlue}\yng(2,3,3)}$};
      \node (node_29) at (168.39bp,479.8bp) [draw,draw=none] {${\color{ForestGreen}\yng(2,3,4)}$};
      \node (node_30) at (140.39bp,532.4bp) [draw,draw=none] {${\color{darkred}\yng(2,4,4)}$};
      \node (node_31) at (224.39bp,479.8bp) [draw,draw=none] {${\color{RoyalBlue}\yng(3,3,3)}$};
      \node (node_32) at (196.39bp,532.4bp) [draw,draw=none] {${\color{RoyalBlue}\yng(3,3,4)}$};
      \node (node_33) at (168.39bp,585.01bp) [draw,draw=none] {${\color{darkred}\yng(3,4,4)}$};
      \node (node_34) at (168.39bp,637.61bp) [draw,draw=none] {${\color{darkred}\yng(4,4,4)}$};
      \draw [darkred,line width=2.5] (node_0) to (node_1);
      \draw [darkred,line width=2.5] (node_1) to (node_2);
      \draw [black,color=Gray!50] (node_1) to (node_5);
      \draw [darkred,line width=2.5] (node_2) to (node_3);
      \draw [black,color=Gray!50] (node_2) to (node_6);
      \draw [darkred,line width=2.5] (node_3) to (node_4);
      \draw [black,color=Gray!50] (node_3) to (node_7);
      \draw [darkred,line width=2.5] (node_4) to (node_8);
      \draw [RoyalBlue,line width=2.5] (node_5) to (node_6);
      \draw [black,color=Gray!50] (node_5) to (node_15);
      \draw [RoyalBlue,line width=2.5] (node_6) to (node_7);
      \draw [black,color=Gray!50] (node_6) to (node_9);
      \draw [black,color=Gray!50] (node_6) to (node_16);
      \draw [black,color=Gray!50] (node_7) to (node_8);
      \draw [RoyalBlue,line width=2.5] (node_7) to (node_10);
      \draw [black,color=Gray!50] (node_7) to (node_17);
      \draw [darkred,line width=2.5] (node_8) to (node_11);
      \draw [black,color=Gray!50] (node_8) to (node_18);
      \draw [black,color=Gray!50] (node_9) to (node_10);
      \draw [YellowOrange,line width=2.5] (node_9) to (node_19);
      \draw [black,color=Gray!50] (node_10) to (node_11);
      \draw [RoyalBlue,line width=2.5] (node_10) to (node_12);
      \draw [black,color=Gray!50] (node_10) to (node_20);
      \draw [darkred,line width=2.5] (node_11) to (node_13);
      \draw [black,color=Gray!50] (node_11) to (node_21);
      \draw [black,color=Gray!50] (node_12) to (node_13);
      \draw [RoyalBlue,line width=2.5] (node_12) to (node_22);
      \draw [darkred,line width=2.5] (node_13) to (node_14);
      \draw [black,color=Gray!50] (node_13) to (node_23);
      \draw [darkred,line width=2.5] (node_14) to (node_24);
      \draw [ForestGreen,line width=2.5] (node_15) to (node_16);
      \draw [ForestGreen,line width=2.5] (node_16) to (node_17);
      \draw [black,color=Gray!50] (node_16) to (node_19);
      \draw [black,color=Gray!50] (node_17) to (node_18);
      \draw [ForestGreen,line width=2.5] (node_17) to (node_20);
      \draw [black,color=Gray!50] (node_18) to (node_21);
      \draw [black,color=Gray!50] (node_19) to (node_20);
      \draw [YellowOrange,line width=2.5] (node_19) to (node_25);
      \draw [ForestGreen,line width=2.5] (node_20) to (node_21);
      \draw [black,color=Gray!50] (node_20) to (node_22);
      \draw [black,color=Gray!50] (node_20) to (node_26);
      \draw [ForestGreen,line width=2.5] (node_21) to (node_23);
      \draw [black,color=Gray!50] (node_21) to (node_27);
      \draw [black,color=Gray!50] (node_22) to (node_23);
      \draw [RoyalBlue,line width=2.5] (node_22) to (node_28);
      \draw [black,color=Gray!50] (node_23) to (node_24);
      \draw [ForestGreen,line width=2.5] (node_23) to (node_29);
      \draw [darkred,line width=2.5] (node_24) to (node_30);
      \draw [YellowOrange,line width=2.5] (node_25) to (node_26);
      \draw [YellowOrange,line width=2.5] (node_26) to (node_27);
      \draw [black,color=Gray!50] (node_26) to (node_28);
      \draw [black,color=Gray!50] (node_27) to (node_29);
      \draw [black,color=Gray!50] (node_28) to (node_29);
      \draw [RoyalBlue,line width=2.5] (node_28) to (node_31);
      \draw [black,color=Gray!50] (node_29) to (node_30);
      \draw [black,color=Gray!50] (node_29) to (node_32);
      \draw [darkred,line width=2.5] (node_30) to (node_33);
      \draw [RoyalBlue,line width=2.5] (node_31) to (node_32);
      \draw [black,color=Gray!50] (node_32) to (node_33);
      \draw [darkred,line width=2.5] (node_33) to (node_34);
    \end{tikzpicture}
    \caption{Our SCD of $L(4, 3)$}
    \label{fig:OurSCDL43}
\end{figure}
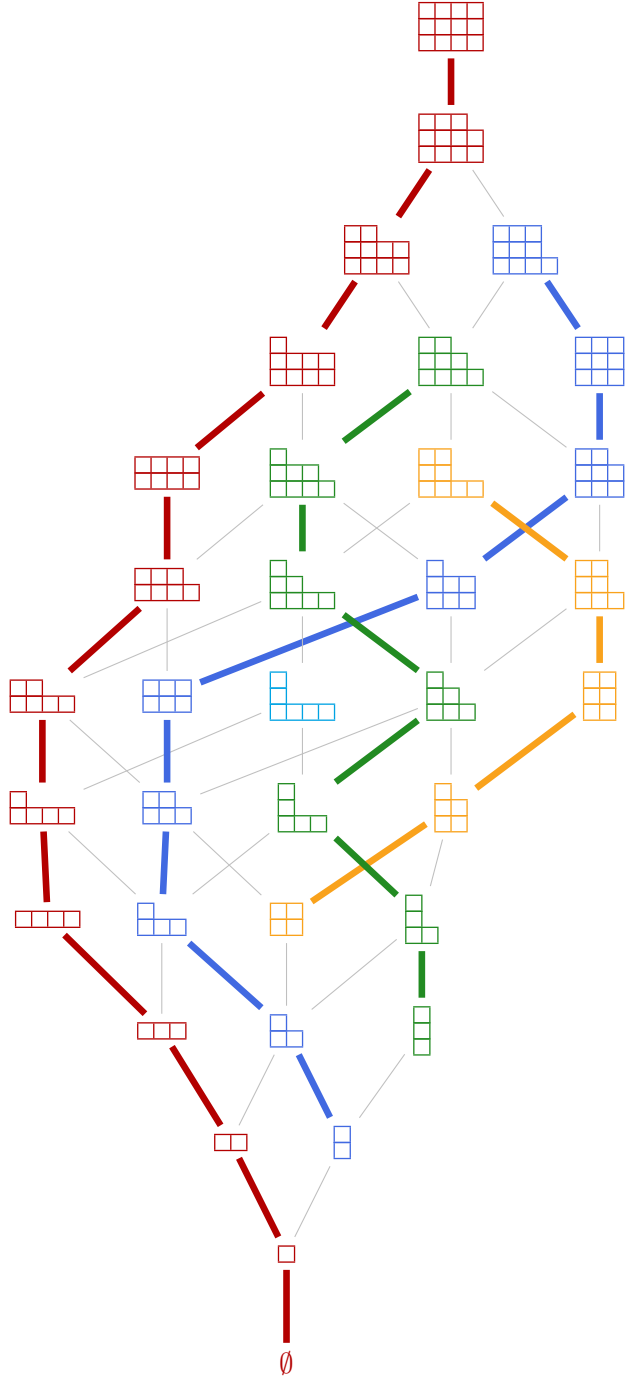

\end{document}